\newcommand{\bb}{\mathbb}
\newcommand{\mc}{\mathcal}
\newcommand{\ms}{\mathscr}
\newcommand{\mf}{\mathfrak}
\newcommand{\abs}[1]{\lvert #1 \rvert}
\newcommand{\into}{\hookrightarrow}
\DeclareMathOperator*{\conv}{conv}
\DeclareMathOperator{\ind}{index}
\DeclareMathOperator{\Subd}{Subd}
\DeclareMathOperator{\Cay}{Cay}
\DeclareMathOperator{\Span}{span}
\DeclareMathOperator{\id}{id}
\DeclareMathOperator{\plus}{Sum}
\DeclareMathOperator{\Sum}{Sum}
\DeclareMathOperator{\Forget}{Forget}
\DeclareMathOperator{\triv}{triv}
\DeclareMathOperator{\stell}{stell}
\DeclareMathOperator{\Max}{Max}
\DeclareMathOperator{\supp}{supp}
\DeclareMathOperator{\pull}{pull}
\DeclareMathOperator{\Pull}{Pull}
\DeclareMathOperator{\dice}{dice}
\DeclareMathOperator{\Dice}{Dice}
\theoremstyle{definition}
\newtheorem{thm}{Theorem}[section]
\newtheorem{prop}[thm]{Proposition}
\newtheorem{exam}[thm]{Example}
\numberwithin{equation}{section}
\newlist{results}{enumerate}{1}
\setlist[results]{label=(\alph*)}
\title{Unimodular triangulations of sufficiently large dilations}
\author{Gaku Liu}
\begin{document}
\maketitle

\begin{abstract}
An integral polytope is a polytope whose vertices have integer coordinates. A unimodular triangulation of an integral polytope in $\bb R^d$ is a triangulation in which all simplices are integral with volume $1/d!$. A classic result of Knudsen, Mumford, and Waterman states that for every integral polytope $P$, there exists a positive integer $c$ such that $cP$ has a unimodular triangulation. We strengthen this result by showing that for every integral polytope $P$, there exists $c$ such that for every positive integer $c' \ge c$, $c'P$ admits a unimodular triangulation. This answers a longstanding question in the area.
\end{abstract}

\section{Introduction}

Unimodular triangulations are elementary objects which arise naturally in algebra and combinatorics. We refer to the paper by Haase et al.\ \cite{HPPS} for an extensive survey on the subject. In this paper we answer a longstanding question on the existence of certain unimodular triangulations.

An integral polytope is a polytope with integer coordinates. Let $P$ be a $d$-dimensional integral polytope in $\bb R^d$. A unimodular triangulation of $P$ is a triangulation of $P$ into simplices each of which has the minimum possible volume $1/d!$. For $d \ge 3$, not every integral polytope has a unimodular triangulation. For example, the simplex with vertices $(0,0,0)$, $(1,0,0)$, $(0,1,0)$, $(1,1,99)$ does not have any nontrivial triangulation, but has volume $>1/6$. On the other hand, every polytope has a \emph{dilation} which admits a unimodular triangulation, as described below.

\begin{thm}[\cite{KKMS}, 1973] \label{thm:KMW}
For every integral polytope $P$, there is a positive integer $c$ such that $cP$ admits a unimodular triangulation.
\end{thm}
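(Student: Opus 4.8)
\emph{Plan.} The natural approach — essentially that of Knudsen, Mumford, and Waterman — is to reduce to a single lattice simplex and then prove the statement for simplices by an induction in which dilation is used to manufacture lattice points at which one can subdivide.

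\smallskip
\emph{Reduction to simplices.} First triangulate $P$ into lattice simplices $\Delta_1,\dots,\Delta_m$; this needs no dilation (a pulling triangulation at the lattice points of $P$ already does it). It then suffices to produce a single $c$ for which each $c\Delta_i$ has a unimodular triangulation and these agree on shared faces. To manage the gluing I would prove, by induction on dimension, the formally stronger statement that for every finite lattice polyhedral complex $\mathcal C$ there is $c$ with $c\mathcal C$ unimodularly triangulable. In the inductive step one triangulates $\mathcal C$ compatibly into lattice simplices (pulling at one common ordering of all the lattice points), applies the dimension-$(d-1)$ case to the codimension-one skeleton to triangulate the boundaries of all top cells at once, and then extends into each top cell separately — distinct $d$-cells meet only along the already-subdivided skeleton, so the only remaining content is about a single simplex whose boundary triangulation is prescribed.

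\smallskip
\emph{The single-simplex lemma.} The crux is: for every lattice $d$-simplex $\Delta$ there is $c_0(\Delta)$ such that whenever $c$ is a multiple of $c_0(\Delta)$, every unimodular triangulation of $\partial(c\Delta)$ refining the face lattice extends to a unimodular triangulation of $c\Delta$. (A multiple is unavoidable: $c$ must be chosen once and for all, and a unimodular simplex has no nontrivial subdivision, so the boundary triangulation that will be glued in has to be re-derived at the final scale.) I would prove this by induction on the normalized volume $\mu(\Delta)=d!\,\mathrm{vol}(\Delta)$. If $\mu(\Delta)=1$, then $c\Delta$ is itself easily triangulated extending any such boundary. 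If $\mu(\Delta)\ge 2$, pick a nonzero lattice point in the half-open fundamental parallelepiped spanned by $\Delta$; clearing denominators gives an integer $q$ and a lattice point of $q\Delta$ lying in the interior or a proper face but not at a vertex. After a further controlled dilation placing this point conveniently, cone from it over the (already triangulated) facets; the resulting lattice simplices can be arranged to be strictly simpler — of smaller normalized volume relative to the relevant sublattice — so the inductive hypothesis applies to each, and amalgamating the finitely many dilation factors produced yields $c_0(\Delta)$.

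\smallskip
\emph{Main obstacle.} The difficulty is exactly what forces the existential in the theorem. A lattice simplex can be \emph{empty} (no lattice points besides its vertices) yet have large volume, so $\Delta$ itself admits no refinement into smaller lattice simplices, and the obvious complexity measures — total normalized volume, say — are invariant under stellar subdivision. Dilation breaks the deadlock, since $c\Delta$ for $c\ge 2$ has lattice points in the relative interiors of its edges; but one must choose the dilation factor and the coning point carefully enough that the pieces provably simplify in the chosen measure, and then organize the whole induction — the compatibility bookkeeping over an entire complex, and the amalgamation of all dilation factors into a single $c$ — around that decrease. Making the volume-reduction step precise and uniform enough to survive the gluing is the real work.
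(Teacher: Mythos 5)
Your index-lowering device is the right one --- the nonzero lattice point in the half-open fundamental parallelepiped is exactly the Waterman/box point used in \cite{KKMS} and in Section~\ref{sec:KMW} of this paper, and coning from (a dilate of) it does strictly decrease the index of every resulting cell. But the argument is organized around a single-simplex lemma that the sketch does not establish and that is much stronger than what the actual proof uses: you assert that for suitable multiples $c$ of some $c_0(\Delta)$, \emph{every} unimodular triangulation of $\partial(c\Delta)$ extends to a unimodular triangulation of $c\Delta$. The proposed induction on normalized volume does not close. After you fix $c$, fix the (arbitrary) boundary triangulation, and cone from the interior lattice point, the resulting pieces still have index $>1$ in general and must be subdivided further; at that stage the only tool your induction offers is ``dilate more,'' which is no longer available because the scale $c$ and the boundary data are already frozen. ``Amalgamating the finitely many dilation factors produced'' cannot repair this, since the pieces produced by the coning depend on the prescribed boundary triangulation at scale $c$, hence are not drawn from a finite list known before $c$ is chosen. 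And, as you yourself note, a unimodular triangulation admits no nontrivial refinement, so there is no way to reconcile whatever triangulation the recursion produces near $\partial(c\Delta)$ with the one that was prescribed. Whether such an extension statement holds at all is a delicate open-ended question; no known proof of KMW goes through it.

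The proof in the paper (and in \cite{KKMS}) sidesteps extension entirely: instead of gluing arbitrary boundary data, one constructs a \emph{canonical} subdivision --- a rule assigning to each (ordered) simplex a triangulation of its dilate whose restriction to any face is the rule applied to that face (Proposition~\ref{prop:canonicalcriterion}, Theorem~\ref{thm:confluence}). Compatibility across the whole complex is then automatic, so no dimension induction over the skeleton and no extension lemma are needed. Concretely, one dilates by $d!$ so that the chosen box point has a representative $x_0$ in $d!S$, subdivides $d!S$ into the ``concentric'' Cayley pieces of \eqref{eq:concentricpolytope}, all of strictly smaller index, and iterates: each round replaces at least one lattice in $\ms L(X)$ by lattices of smaller index while fixing the others, terminating at index $1$ after finitely many rounds with total dilation $(d!)^N$. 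So the correct fix for your outline is to replace the extension lemma by canonicity of the subdivision rule; the box-point volume-reduction step you describe then works essentially as you intend.
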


This theorem is known as the Kempf--Knudsen--Mumford--Saint-Donat theorem (KKMS) or the Knudsen--Mumford--Waterman theorem (KMW).\footnote{The result appears in the book \cite{KKMS} which is authored by Kempf, Knudsen, Mumford, and Saint-Donat. The individual chapter in which it appears is authored by Knudsen, who also credits Mumford and Waterman. As a result, both naming conventions have appeared in the literature. In this paper we use the convention ``KMW theorem''.} It is one of the earliest considerations of unimodular triangulations, and was proved in the context of algebraic geometry in order to prove semistable reduction for families of varieties over a curve. A more general version of semistable reduction was conjectured by Abramovich and Karu in 2000 \cite{AK00} and was reduced to proving the existence of certain unimodular triangulations of maps. The conjecture was recently proven by Adiprasito, Temkin, and the author in \cite{ALT18}.

Understanding what values of $c$ work in Theorem~\ref{thm:KMW} is an old and difficult problem. The answer is almost completely known in dimensions $\le 3$ \cite{SZ13}, and a general upper bound for the smallest possible $c$ is known in terms of the dimension and volume of the polytope \cite{HPPS}. In this paper, we prove the following:

\begin{thm} \label{thm:main}
For every integral polytope $P$, there is a positive integer $c$ such that for all $c' \ge c$, $c'P$ admits a unimodular triangulation.
\end{thm}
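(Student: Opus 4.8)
The plan is to prove the theorem directly, by induction on $d = \dim P$, by exhibiting a threshold $c_1 = c_1(P)$ such that $cP$ has a unimodular triangulation for every $c \ge c_1$. The case $d \le 1$ is immediate, since a dilated segment is subdivided into unit segments. For the inductive step, the idea is to cut $cP$ into pieces, each easy to triangulate unimodularly, doing both the cutting and the triangulating by \emph{global} recipes so that the local triangulations automatically agree on shared faces. First I would separate $cP$ into its \emph{core} — the union of the unit lattice cubes contained in $cP$ — and a \emph{boundary layer} of bounded thickness (the thickness depending on the facet normals of $P$ but not on $c$). The core is triangulated by applying a fixed unimodular triangulation to each unit cube (for instance the standard ``staircase'' triangulation associated to a linear order on the coordinates), which is manifestly consistent from cube to cube, and every simplex so produced is unimodular.

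The boundary layer is where the work lies. Near a facet $F$ of $P$, the boundary layer looks, up to a lattice isomorphism, like a product $(c'\widehat F)\times Q_F$, where $\widehat F$ is $F$ with a collar of constant width removed, $c'$ differs from $c$ by at most a constant, and $Q_F$ is a \emph{fixed} low-complexity lattice polytope in the direction transverse to $F$; more generally, near a face $G$ of $P$ of codimension $k$ the relevant piece looks like $(c'\widehat G)\times Q_G$ with $\widehat G$ a truncation of $G$ and $Q_G$ a fixed $k$-dimensional lattice polytope. The factor $c'\widehat G$ is a dilation of a polytope of dimension $<d$, so by the inductive hypothesis it admits a unimodular triangulation once $c$ is large enough; the factor $Q_G$ is fixed, hence admits a unimodular triangulation (a bounded dilation suffices by Theorem~\ref{thm:KMW}, and one can in fact arrange $Q_G$ to be a cube through the choice of dicing). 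Now a product $A\times B$ of two unimodularly triangulated lattice polytopes is again unimodularly triangulated: put on each cell $\sigma\times\tau$ the staircase triangulation of the product of simplices $\sigma\times\tau$, and drive the staircase rule by fixed linear orders on the vertices so that the result is consistent from cell to cell and across the faces $G$. Exhibiting the boundary layer as a union of such product pieces is where I expect the dicing operations and the Cayley trick to enter: dicing $cP$ by the hyperplanes parallel to its facets, together with coordinate hyperplanes, produces both the core cubes and the boundary pieces, and the Cayley trick reorganizes the near-facet pieces into the product form above. Any simplices that emerge non-unimodular at an intermediate stage — which can happen where the dicing hyperplanes sit in special position relative to the lattice — are repaired by stellar subdivisions or by pulling, again according to a global rule, so that consistency is maintained.

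The main obstacle, and what I expect to occupy the bulk of the argument, is \emph{uniformity together with global consistency}. The threshold $c_1$ must be chosen so that the inductive triangulations of the faces exist and, crucially, so that the combinatorial recipe behaves in the same way for every $c \ge c_1$ — so that near a face $G$ the pieces for two different large values of $c$ differ only by ``more of the same.'' One must also check that the core triangulation, the product triangulations of the various boundary pieces, and the triangulations attached to the corners where several faces of $P$ meet all glue into a single unimodular triangulation of $cP$: the pieces over faces $G \subset H$ must induce the same triangulation on their common part, and the triangulation induced on the outer boundary of the core must match what the boundary layer produces on its inner boundary. Reconciling these constraints while keeping every simplex unimodular, and keeping the whole construction independent of the particular large $c$, is the technical heart of the proof, and is presumably the reason for developing the apparatus of dicings, cube triangulations, cores, stellar subdivisions, and the Cayley trick. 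As a byproduct, carrying this out reproves Theorem~\ref{thm:KMW}, with the classical difficulty of that theorem now concentrated in the corner analysis.
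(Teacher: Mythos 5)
There is a genuine gap, and it is not merely the ``technical heart'' you defer to at the end --- it is the entire content of the theorem. Two specific failures. First, the claimed local product structure of the boundary layer is unjustified and in general false. The inner boundary of your cubical core is a jagged staircase surface, and for a facet of $P$ whose normal is not a coordinate vector, the region between $cF$ and that staircase is not, up to lattice isomorphism, $(c'\widehat F)\times Q_F$ with a fixed transverse factor; its lattice combinatorics varies (quasi-periodically) with $c$. If you instead dice by hyperplanes parallel to the facets of $P$, the core is no longer made of unit cubes and you are back to the original problem on each piece. Second, even where a transverse factor $Q_G$ can be split off, it is exactly the factor that is \emph{not} dilated (the layer has bounded thickness), so Theorem~\ref{thm:KMW} tells you nothing about it: a fixed lattice polytope such as a bounded truncation of the tangent cone at a vertex --- e.g.\ the cone over the simplex $(0,0,0),(1,0,0),(0,1,0),(1,1,99)$ from the introduction --- may admit no unimodular triangulation at all, and ``repair by stellar subdivisions or pulling'' cannot fix this: within a fixed polytope there may be no lattice points available to subdivide with, and lowering indices to $1$ in general requires further dilation, which is precisely why the KMW argument iterates dilation by $d!$ rather than working inside one dilation. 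The corner pieces where several facets meet, which you concede carry ``the classical difficulty,'' are therefore not a residual check; making their triangulations exist \emph{and} agree with the facet layers and the core is the whole problem, and your outline offers no mechanism for it beyond the hope of ``global rules.''

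For comparison, the paper does not attempt to triangulate each large dilation by a core/boundary decomposition at all. It proves (Theorem~\ref{thm:main2}) that for suitable coprime integers $a=c^N$ and $b=(d!)^N$, every combination $(ra+sb)P$ with $r,s\ge 0$ admits a unimodular triangulation, and then Theorem~\ref{thm:main} follows because every sufficiently large integer lies in the numerical semigroup generated by $a$ and $b$. The construction runs the KMW index-lowering scheme simultaneously on the pair $(cP, d!P)$ via mixed subdivisions over $2\mathcal P$, Cayley polytopes, and box points, and the gluing problem you wave at is solved systematically by the canonical-subdivision machinery (Theorem~\ref{thm:confluence} and the posets $\mathcal C$ and $\mathcal D$). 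Your proposal correctly identifies uniformity and consistency as the obstacles, but it does not contain the ideas needed to overcome them.
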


The result may be a bit surprising, as there are known to be polytopes $P$ and integers $c$ such that $cP$ has a unimodular triangulation but $(c+1)P$ does not.
We do not provide an explicit value for the $c$ in Theorem~\ref{thm:main}, but we note that the upper bound should be doubly exponential in the dimension and volume of $P$ using the arguments from \cite{HPPS}. We also note that the unimodular triangulation can be made regular, but to keep the paper simpler we do not prove this.

The idea of the proof is as follows. The argument from \cite{KKMS} in fact shows that if $cP$ has a unimodular triangulation, then $c'P$ has a unimodular triangulation for any $c'$ a multiple of $c$. For our result, we prove the following: There exist relatively prime positive integers $a$ and $b$ such that for any nonnegative integers $r$ and $s$, $(ra + sb)P$ has a unimodular triangulation.
In order to prove this, we extend the results of \cite{ALT18} to \emph{mixed subdivisions}, which can be thought of as coupled subdivisions of two polytopes. This ends up being more difficult than one might expect. While the constructions from \cite{ALT18} have natural mixed analogues, these natural analogues do not lead to a proof of the theorem; see the discussion at the beginning of Section~\ref{sec:main}. Therefore, we have to create some more complicated analogues as well as make some new constructions.

Another feature of this proof is that it makes heavy use of an idea we call \emph{canonical subdivisions}. Canonical subdivisions are present in a more specialized form in \cite{KKMS} and \cite{HPPS}, and are expanded in a very general way in \cite{ALT18}. In \cite{ALT18}, the idea was explained using the language of categories and functors. In this paper we rework these ideas in terms of posets and poset maps, which turns out to be more flexible and general for this situation.
Essentially, a canonical subdivision is a rule to subdivide every polytope within a family of polytopes so that this rule is compatible with the operation of restricting to a face of a polytope. The importance of this is that canonical subdivisions can be used to further subdivide arbitrary polytopal complexes in a consistent way. This allows us to iteratively construct a unimodular triangulation through many intermediate canonical subdivisions. This idea is implicit in Knudsen et al.'s original proof of the KMW theorem, and in fact they proved that any polytopal \emph{complex} $X$ of integral polytopes has a constant $c$ such that $cX$ has a unimodular triangulation. Our main result also extends to polytopal complexes in this way, which is immediate from the proof.

The cornerstone of our canonical subdivision method is Theorem~\ref{thm:confluence}, which may be of independent interest. This theorem gives conditions under which a family of non-canonical subdivisions can be used to recursively construct a canonical subdivision. This theorem is important because our desired canonical subdivision is very complicated and canonicity is difficult to check. The theorem allows us to instead construct simpler non-canonical subdivisions, after which the criteria of the theorem are straightforward to check. To demonstrate Theorem~\ref{thm:confluence}, we have also included in Section~\ref{sec:canonicalexamples} some examples of well-known subdivisions in the literature that can be constructed using these methods.

Finally, we would like to mention a few open problems. Despite the method of proof used in this paper, it is unknown whether $c_1P$ and $c_2P$ having unimodular triangulations implies that $(c_1+c_2)P$ has one as well. In addition, it is unknown whether for every dimension $d$ there exists an integer $c_d$ such that $c_d P$ has a unimodular triangulation for every $d$-dimensional polytope $P$. Finally, the question of whether specific classes of polytopes have unimodular triangulations has attracted significant attention. Classes of interest include smooth polytopes, matroid polytopes, and parallelepipeds.

The paper is organized as follows. In Section~\ref{sec:prelim} we introduce the language of polytopes, posets, and canonical subdivisions. This section is very abstract, but the author believes the initial investment makes the main argument much easier to follow. In Section~\ref{sec:Cayleypolytopes}, we introduce Cayley polytopes which are the main building blocks of our constructions. In Section~\ref{sec:boxpoints} we introduce box points, which allow us to modify triangulations to produce triangulations with simplices of smaller volume. Our main constructions and proof are in Section~\ref{sec:main}.

%

\section{Preliminaries} \label{sec:prelim}

\subsection{Polytopes} \label{sec:polytopes}

Throughout the paper, we work in $\bb R^d$ with some fixed $d$ unless otherwise specified. In this paper, a \emph{polytope} is a nonempty convex hull of finitely many points in $\bb R^d$. Given a polytope $P$ and a linear functional $\phi \in (\bb R^d)^\ast$, the \emph{face} of $P$ with respect to $\phi$ is the set of all points in $P$ at which $\phi$ reaches its maximum on $P$. We \textbf{do not} consider the empty set to be a face. A \emph{simplex} is the convex hull of a nonempty, affinely independent set of points.

For any polytope $P \subset \bb R^d$, we define $V(P)$ to be the real span of the set $\{ u -v : u,v \text{ are vertices of } P \}$. We say that polytopes $P_1$, \dots, $P_n$ are \emph{independent} if $V(P_1)$, \dots, $V(P_n)$ are linearly independent vector subspaces, i.e. $\dim(V(P_1) + \dots + V(P_n)) = \dim V(P_1) + \dots + \dim V(P_n)$. A polytope of the form $\sum_{j=1}^n S_j$ where $S_1$, \dots, $S_n$ are independent simplices is called a \emph{polysimplex} or \emph{product of simplices}.


In this paper, a \emph{lattice} is an additive subgroup of $\bb Z^d$. We define the \emph{normalized index}, or just \emph{index}, of a lattice $L$ to be the group index $[ \Span_{\bb R}(L) \cap \bb Z^d, L ]$. This index is always finite. We denote the index by $\ind(L)$.

An \emph{integral polytope} is a polytope whose vertices have integer coordinates. Given an integral polytope $P$, we define $L(P)$ to be the lattice generated over $\bb Z$ by the set $\{ u -v : u,v \text{ are vertices of } P \}$. We define $N(P)$ to be the lattice $V(P) \cap \bb Z^d$. The \emph{index} of $P$ is the normalized index of $L(P)$, which equals $[N(P), L(P)]$. A \emph{unimodular simplex} is an integral simplex of index 1. For $d$-dimensional simplices in $\bb R^d$ this is equivalent to the definition given in the introduction; the current definition extends this to simplices with dimension lower than the ambient space.

An \emph{ordered polytope} is a polytope along with a total ordering on its vertices. An \emph{ordered face}, or \emph{face}, of an ordered polytope is a face of the underlying polytope along with the vertex order induced by the original polytope. Any translation or positive dilation of an ordered polytope is also an ordered polytope, with the obvious ordering.





\subsection{Posets and subdivisions}

\subsubsection{Relative posets}

Recall that a \emph{poset} is a set $\mc A$ along with a binary relation $\le_{\mc A}$ on $\mc A$ which is reflexive, antisymmetric, and transitive. We will always denote a poset by its set of elements, and if there is no risk of confusion we will use the symbol ``$\le$'' in place of $\le_{\mc A}$.

Given two posets $\mc A$ and $\mc B$, a \emph{poset map} is a function $f : \mc A \to \mc B$ such that $f(x) \le_{\mc B} f(y)$ whenever $x \le_{\mc A} y$. A \emph{poset isomorphism} is a poset map which has an inverse which is a poset map.

Let $\mc B$ be a poset. We define a \emph{$\mc B$-poset} to be a pair $(\mc A, p)$ where $\mc A$ is a poset and $p : \mc A \to \mc B$ is a poset map. If there is no risk of confusion, we denote $(\mc A, p)$ by just $\mc A$. Clearly, if $(\mc A, p)$ is a $\mc B$-poset and $(\mc A', p')$ is an $\mc A$-poset, then $(\mc A', p \circ p')$ is a $\mc B$-poset.

Given two $\mc B$-posets $(\mc A, p)$ and $(\mc A', p')$, we define a \emph{poset map over $\mc B$} to be a poset map $f : \mc A \to \mc A'$ such that $p = p' \circ f$.

Given a poset $\mc A$ and a subset $X \subset \mc A$, we let $\langle X \rangle_{\mc A}$ or $\langle X \rangle$ denote the set $X$ along with all members of $\mc A$ below $X$ in $\mc A$. If $x$ is a single element of $\mc A$, we use $\langle x \rangle$ as shorthand for $\langle \{x\} \rangle$. We let $\Max_{\mc A} X$ or $\Max X$ denote the maximal elements of $X$ with respect to $\le_{\mc A}$.

\subsubsection{The poset of polytopes and subdivisions}

Let $\mc P$ be the poset whose elements are all polytopes in $\bb R^d$, and with the partial order $F \le_{\mc P} P$ if $F$ is a face of $P$. The main class of posets we will work with in this paper are $\mc P$-posets.

\begin{exam}
Trivially, $(\mc P, \id)$ is a $\mc P$-poset, where $\id$ is the identity map.
\end{exam}

\begin{exam}
Let $\mc O$ be the poset whose elements are ordered polytopes and with the partial order $F \le_{\mc O} P$ if $F$ is an ordered face of $P$. Then $(\mc O, \Forget)$, where $\Forget$ is the map $\mc O \to \mc P$ which forgets the vertex ordering, is a $\mc P$-poset.
\end{exam}

\begin{exam} \label{exam:pointsets}
Let $\mc S$ be the poset whose elements are totally ordered finite subsets of $\bb R^d$, with the partial order $B \le_{\mc S} A$ if $B = A \cap F$, where $F$ is any face of $\conv(A)$, and the order on $B$ is the order induced by $A$. Then $(\mc S, \conv)$ is a $\mc P$-poset.
\end{exam}

A \emph{polytopal complex}, or \emph{$\mc P$-complex}, is a finite subset $X$ of $\mc P$ satisfying the following.
\begin{enumerate}[label=(\alph*)]
\item If $F$, $P \in \mc P$ such that $P \in X$ and $F \le P$, then $F \in X$.
\item If $P$, $Q \in X$ are different, then the relative interiors of $P$ and $Q$ are disjoint.
\end{enumerate}
We define the \emph{support} of a polytopal complex $X$ to be
\[
\abs{X} := \bigcup_{x \in X} x.
\]
If $\abs{X}$ is a polytope $Q$, we say that $X$ is a \emph{subdivision} of $Q$. A \emph{triangulation} is a subdivision all of whose elements are simplices.

More generally, let $(\mc A, p)$ be a $\mc P$-poset. An \emph{$(\mc A, p)$-complex} or \emph{$\mc A$-complex} is subset $X$ of $\mc A$ such that $p(X)$ is a polytopal complex and the map $p : X \to p(X)$ is a poset isomorphism from the subposet of $\mc A$ induced on $X$ to the subposet of $\mc P$ induced on $p(X)$. A \emph{subcomplex} of $X$ is a subset of $X$ which is also an $\mc A$-complex. The \emph{support} of $X$ is defined to $\abs{p(X)}$. We denote the support by simply $\abs{X}$. If the support is a polytope $Q$, we say that $X$ is a \emph{$(\mc A,p)$-subdivision} or \emph{$\mc A$-subdivision} of $Q$.

For any $\mc P$-poset $(\mc A,p)$, let $\Subd(\mc A)$ be the poset whose elements are $\mc A$-subdivisions of polytopes, and with partial order $X' \le_{\Subd(\mc A)} X$ if $X'$ is a subcomplex of $X$ such that $\abs{X'}$ is a face of $\abs{X}$. Then the map $\abs{\cdot} : \Subd(\mc A) \to \mc P$ which sends $X$ to $\abs{X}$ is a poset map, and $(\Subd(\mc A), \abs{\cdot})$ is a $\mc P$-poset.

The following proposition is easy to verify.

\begin{prop} \label{prop:mapofsubdivisions}
Let $(\mc A, p)$ and $(\mc A', p')$ be $\mc P$-posets and let $f : \mc A \to \mc A'$ be a poset map over $\mc P$. Then the map $f : \Subd(\mc A) \to \Subd(\mc A')$ which sends $X$ to $f(X)$ is a poset map over $\mc P$.
\end{prop}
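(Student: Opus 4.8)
The plan is a diagram chase built entirely on the identity $p = p' \circ f$. There are two things to check: that $f$ carries $\Subd(\mc A)$ into $\Subd(\mc A')$ compatibly with the support maps, and that $f$ preserves the order $\le_{\Subd}$. Both follow from a single claim: if $X \subseteq \mc A$ is an $\mc A$-complex, then $f(X)$ is an $\mc A'$-complex and $\abs{f(X)} = \abs{X}$.

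To prove the claim, first note $p'(f(X)) = p(X)$, which is immediate from $p = p' \circ f$; hence $p'(f(X))$ is a polytopal complex and $\abs{p'(f(X))} = \abs{p(X)} = \abs{X}$. It remains to show $p' : f(X) \to p'(f(X)) = p(X)$ is a poset isomorphism onto its image. Surjectivity is automatic. For injectivity, suppose $f(x_1), f(x_2) \in f(X)$ with $x_1, x_2 \in X$ have the same image under $p'$; then $p(x_1) = p'(f(x_1)) = p'(f(x_2)) = p(x_2)$, so $x_1 = x_2$ because $p|_X$ is injective. (The same computation shows $f|_X : X \to f(X)$ is a bijection.) Monotonicity of $p'|_{f(X)}$ is inherited from $p'$, and its inverse is $f|_X \circ (p|_X)^{-1}$, a composite of the restriction of the poset map $f$ with the inverse of the poset isomorphism $p|_X$, hence itself a poset map. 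This proves the claim.

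Now suppose $X \in \Subd(\mc A)$, so $X$ is an $\mc A$-subdivision of a polytope $Q = \abs{X}$. By the claim $f(X)$ is an $\mc A'$-complex with $\abs{f(X)} = \abs{X} = Q$, so $f(X) \in \Subd(\mc A')$ and $\abs{f(X)} = \abs{X}$, which is exactly the compatibility with the support maps required for $f$ to be a map over $\mc P$. For monotonicity: if $X' \le_{\Subd(\mc A)} X$, then $X'$ is a subcomplex of $X$ (in particular $X' \subseteq X$, so $f(X') \subseteq f(X)$) and $X'$ is itself an $\mc A$-complex, so by the claim $f(X')$ is an $\mc A'$-complex contained in $f(X)$, i.e. a subcomplex of $f(X)$; moreover $\abs{f(X')} = \abs{X'}$ is a face of $\abs{X} = \abs{f(X)}$. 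Hence $f(X') \le_{\Subd(\mc A')} f(X)$, completing the proof.

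The argument is purely formal; the only point needing slight care is verifying that the inverse of $p'|_{f(X)}$ is order-preserving, and this is precisely where the hypothesis that $f$ is a poset map (rather than merely a function over $\mc P$) is used.
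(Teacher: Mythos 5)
Your proof is correct, and it is exactly the routine verification the paper has in mind when it says the proposition ``is easy to verify'': the key identity $p'(f(X)) = p(X)$ coming from $p = p' \circ f$, injectivity of $p'|_{f(X)}$ inherited from injectivity of $p|_X$, and the inverse $f|_X \circ (p|_X)^{-1}$ being a poset map. Nothing further is needed.
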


\subsubsection{Trivial subdivisions and perfect posets}

For any polytope $P$, we define the \emph{trivial subdivision} of $P$ to be $\triv(P) := \langle P \rangle_{\mc P}$.

Let $(\mc A, p)$ be a $\mc P$-poset. If, for all $x \in \mc A$, the set $\langle x \rangle_{\mc A}$ is an $\mc A$-complex, then we call $(\mc A,p)$ a \emph{perfect} $\mc P$-poset. In this situation we define $\triv_{\mc A}(x) := \langle x \rangle_{\mc A}$. We necessarily have that $p(\triv_{\mc A}(x))$ is the trivial subdivision of $p(x)$.

Every example we have given so far is a perfect $\mc P$-poset. In particular, $(\Subd(\mc A), \abs{\cdot})$ is a perfect $\mc P$-poset for any $\mc P$-poset $\mc A$.

\subsection{Mixed subdivisions} \label{sec:mixed}

Let $n$ be a positive integer. Define $n\mc P$ to be the poset whose set of elements is
\[
\underbrace{\mc P \times \dots \times \mc P}_{n \text{ times}}
\]
and with $(F_1, \dots, F_n) \le_{n \mc P} (P_1, \dots, P_n)$ if and only if $F_1$, \dots, $F_n$ are faces of $P_1$, \dots, $P_n$, respectively, such that $F_1+\dots+F_n$ is a face of $P_1+\dots+P_n$. Equivalently, $(F_1, \dots, F_n) \le_{n \mc P} (P_1, \dots, P_n)$ if and only if there exists a linear functional such that $F_1$, \dots, $F_n$ are the faces of $P_1$, \dots, $P_n$, respectively, with respect to this linear functional. In the future, whenever we write $F \le P$ for two $n$-tuples of polytopes $F$, $P$, we mean that $F \le_{n \mc P} P$.

Let $\plus : n\mc P \to \mc P$ be the map sending $(P_1,\dots,P_n)$ to $P_1+\dots+P_n$. Then by the definition of $n\mc P$ this is a poset map. Furthermore, $(n\mc P, \plus)$ is a perfect $\mc P$-poset.

Let $X$ be an $n\mc P$-complex. Define the \emph{$n$-support} of $X$ to be
\[
\abs{X}_n := (Q_1,\dots,Q_n)
\]
where
\[
Q_k = \bigcup_{(P_1,\dots,P_n) \in X} P_k.
\]
It is well-known \cite{San05} that if $\abs{X}$ is a polytope, then $Q_1$, \dots, $Q_n$ are polytopes and $\abs{X} = Q_1 + \dots + Q_n$. The set $X$ is known as a \emph{mixed subdivision} of $(Q_1,\dots,Q_n)$. In addition, the map $\abs{\cdot}_n : \Subd(n\mc P) \to n\mc P$ which sends $X$ to $\abs{X}_n$ is a poset map over $\mc P$. In particular, it is a poset map, so $(\Subd(n\mc P), \abs{\cdot}_n)$ is an $n\mc P$-poset.

Now suppose $(\mc A, p)$ is an $n\mc P$-poset. Then $(\mc A, \plus \circ p)$ is a $\mc P$-poset. For the rest of the paper, whenever we define an $n \mc P$-poset $(\mc A, p)$, we will also implicitly treat $\mc A$ as a $\mc P$-poset as above. In particular, we can define $\Subd(\mc A)$ as the poset of $(\mc A, \plus \circ p)$-subdivisions.

Let $(\mc A, p)$ be an $n\mc P$-poset, and let $X$ be an $\mc A$-complex. Then $p(X)$ is an $n\mc P$ complex.  We call $\abs{p(X)}_n$ the \emph{$n$-support} of $X$, and for convenience we denote it by $\abs{X}_n$. Suppose additionally that $X$ is an $\mc A$-subdivision.
By Proposition~\ref{prop:mapofsubdivisions}, the map $p : \Subd(\mc A) \to \Subd(n\mc P)$ given by $Y \mapsto p(Y)$, is a poset map over $\mc P$. As noted previously, $\abs{\cdot}_n : \Subd(n\mc P) \to n\mc P$ is also a poset map over $\mc P$. Hence, the map $\abs{\cdot}_n : \Subd(\mc A) \to n\mc P$ given by $X \mapsto \abs{p(X)}_n = \abs{X}_n$ is a poset map over $\mc P$. It follows that $\abs{X} = \Sum \abs{X}_n$. In addition, $(\Subd(\mc A), \abs{\cdot}_n)$ is an $n\mc P$-poset.

We note the following fact for later.

\begin{prop}\label{prop:nsupporttrivial}
Let $(\mc A,p)$ be an $n \mc P$-poset and let $X$ be an $\mc A$-subdivision. If $X$ has a single maximal element $x$, then $\abs{X}_n = p(x)$.
\end{prop}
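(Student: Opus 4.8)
The plan is to read off the $n$-support directly from the unique maximal element, using only that $p$ is order-preserving and that $X$ is finite; no facts about Minkowski sums are needed. First I would note that $X$ is finite: by definition of an $\mc A$-complex, $\plus \circ p$ restricts to a bijection from $X$ onto the polytopal complex $(\plus\circ p)(X)$, and polytopal complexes are finite by definition; in particular $p$ restricts to a bijection $X \to p(X)$. Consequently, in the finite poset $X$ every element lies below some maximal element, and since $x$ is the \emph{unique} maximal element, every $y \in X$ satisfies $y \le_{\mc A} x$.

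Next, write $p(x) = (P_1, \dots, P_n)$. Applying the poset map $p$ to $y \le_{\mc A} x$ gives $p(y) \le_{n\mc P} p(x)$ for every $y \in X$; unwinding the definition of $\le_{n\mc P}$, if $p(y) = (F_1, \dots, F_n)$ then each $F_k$ is a face of $P_k$, so in particular $F_k \subseteq P_k$. Now $\abs{X}_n = \abs{p(X)}_n = (Q_1, \dots, Q_n)$ where $Q_k = \bigcup_{(F_1,\dots,F_n) \in p(X)} F_k$. The previous sentence shows $Q_k \subseteq P_k$ for every $k$, while the term coming from $p(x) \in p(X)$ shows $P_k \subseteq Q_k$. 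Hence $Q_k = P_k$ for all $k$, that is $\abs{X}_n = (P_1,\dots,P_n) = p(x)$, as desired.

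There is no substantial obstacle here; the only point needing a moment's attention is the finiteness argument that forces every element of $X$ below $x$. It is also worth remarking that the hypothesis that $X$ is a \emph{subdivision} (rather than merely an $\mc A$-complex) plays no role: an $\mc A$-complex with a unique maximal element is automatically a subdivision, since its image under $\plus\circ p$ is then the complex of all faces of $\plus(p(x)) = P_1 + \dots + P_n$, whose support is the polytope $P_1 + \dots + P_n$.
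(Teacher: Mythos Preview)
Your proof is correct and follows the same route as the paper's, just with more detail: the paper simply asserts that $p(X)$ has $p(x)$ as its unique maximal element and then reads off $\bigcup_{(P_1,\dots,P_n)\in p(X)} P_k = p(x)_k$ directly, whereas you make explicit both the finiteness step and the containment $F_k \subseteq P_k$ coming from $p(y) \le_{n\mc P} p(x)$. Your closing remark that the subdivision hypothesis is superfluous is also correct and not noted in the paper.
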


\begin{proof}
If $X$ has a single maximal element $x$, then $p(X)$ has a single maximal element $p(x)$. Therefore,
\begin{align*}
\abs{X}_n &= \left( \bigcup_{(P_1,\dots,P_n)\in p(X)} P_k \right)_{k=1}^n \\
&= \left( p(x)_k \right)_{k=1}^n \\
&= p(x). 
\end{align*}
\end{proof}

\subsection{Canonical subdivisions}

Let $(\mc A, p)$, $(\mc A', p')$ be $n\mc P$-posets. We define a \emph{canonical subdivsion} over $n \mc P$ to be any poset map over $n \mc P$ from $(\mc A,p)$ to $(\Subd(\mc A'),\abs{\cdot}_n)$. The importance of these maps is the following proposition.

\begin{prop} \label{prop:canonicalrefinement}
Let $\Sigma : \mc A \to \Subd(\mc A')$ be a canonical subdivision over $n\mc P$. For $X$ an $\mc A$-complex, define
\[
\Sigma^\ast(X) := \bigcup_{x \in X} \Sigma(x).
\]
Then $\Sigma^\ast(X)$ is an $\mc A'$-complex with $n$-support $\abs{X}_n$. Moreover, the map $\Sigma^\ast : \Subd(\mc A) \to \Subd(\mc A')$ given by $X \mapsto \Sigma^\ast(X)$ is a canonical subdivision over $n\mc P$.
\end{prop}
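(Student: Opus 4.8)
The plan is to verify the three assertions in order: first that $\Sigma^\ast(X)$ is an $\mc A'$-complex, then that its $n$-support equals $\abs{X}_n$, and finally that $\Sigma^\ast$ is itself a canonical subdivision over $n\mc P$ (i.e.\ a poset map over $n\mc P$ into $\Subd(\mc A')$). Throughout I would work in $\mc P$ via $\plus \circ p$ and $\plus \circ p'$ when convenient, since an $n\mc P$-complex is in particular a $\mc P$-complex, and the $n$-support statement then follows from tracking the underlying polytopal complexes together with Proposition~\ref{prop:nsupporttrivial} applied piece by piece.

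First I would show $\Sigma^\ast(X)$ is an $\mc A'$-complex. The key input is that $\Sigma$ is a poset map over $n\mc P$, hence over $\mc P$: if $F \le_{\mc A} x$ in $X$, then $\Sigma(F) \le_{\Subd(\mc A')} \Sigma(x)$, meaning $\Sigma(F)$ is a subcomplex of $\Sigma(x)$ whose support is the face $\abs{\Sigma(F)} = p'(\text{-stuff})$... more precisely $\abs{\Sigma(F)}$ is the face of $\abs{\Sigma(x)}$ corresponding to $F$. So for two elements $x, y \in X$, the subcomplexes $\Sigma(x)$ and $\Sigma(y)$ agree on $\Sigma(x \cap y)$ (where $x \cap y$ ranges over common faces), and their supports $\abs{\Sigma(x)}$, $\abs{\Sigma(y)}$ are polytopes meeting along a common face. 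This gives that the relative interiors of distinct cells of $\Sigma^\ast(X)$ are disjoint (condition (b) of a polytopal complex), using that $X$ itself satisfies (b); condition (a), closure under faces, is immediate since each $\Sigma(x)$ is closed under faces and faces of cells of $\Sigma(x)$ lie in some $\Sigma(F)$. One must also check that $\plus \circ p'$ restricted to $\Sigma^\ast(X)$ is a poset isomorphism onto its image in $\mc P$; this follows because each $\Sigma(x)$ has this property and the $\Sigma(x)$ glue compatibly along faces.

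For the $n$-support: the underlying polytopal complex of $\Sigma^\ast(X)$ is $\bigcup_{x \in X} (\plus \circ p')(\Sigma(x))$, and since $\abs{\cdot}_n : \Subd(\mc A') \to n\mc P$ is a poset map over $\mc P$ with $\abs{\Sigma(x)}_n = p(x)$ (because $\Sigma$ is a map over $n\mc P$, so $\abs{\Sigma(x)}_n = p(x)$ coordinatewise), the union of the $k$-th coordinate polytopes over all $x \in X$ equals $\bigcup_{x \in X} p(x)_k = (\abs{X}_n)_k$. Hence $\abs{\Sigma^\ast(X)}_n = \abs{X}_n$.

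Finally, for $\Sigma^\ast$ being a poset map over $n\mc P$: functoriality $\Sigma^\ast(Y' ) \le_{\Subd(\mc A')} \Sigma^\ast(Y)$ whenever $Y' \le_{\Subd(\mc A)} Y$ is essentially formal, since $Y' \subset Y$ with $\abs{Y'}$ a face of $\abs{Y}$ forces $\Sigma^\ast(Y') \subset \Sigma^\ast(Y)$ and, using the support computation above plus the fact that $\Sigma$ sends the faces of $\abs{Y}$ cut out by $\abs{Y'}$ to the corresponding faces, $\abs{\Sigma^\ast(Y')}$ is the corresponding face of $\abs{\Sigma^\ast(Y)}$. That it is a map over $n\mc P$ is exactly the $n$-support statement just proved. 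I expect the main obstacle to be the disjoint-relative-interiors verification in the first part: one has to argue carefully that cells coming from $\Sigma(x)$ and $\Sigma(y)$ for $x \ne y$ cannot overlap in their relative interiors, which requires knowing that $\Sigma(x)$ and $\Sigma(y)$ restrict to the \emph{same} subdivision on the common face $\abs{\Sigma(x)} \cap \abs{\Sigma(y)}$ — and this is precisely where the hypothesis that $\Sigma$ is a poset map over $n\mc P$ (rather than just a function assigning subdivisions) does the work, via the characterization of $\le_{\Subd(\mc A')}$ as "subcomplex supported on a face."
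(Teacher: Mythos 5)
Your proposal is correct and follows essentially the same route as the paper: use the poset-map (canonicity) hypothesis to see that $\Sigma(x)$ and $\Sigma(y)$ restrict to the same subdivision of the common face of $\abs{x}$ and $\abs{y}$ (the paper phrases this via restrictions $\Sigma(x|z)=\Sigma(y|z)=\Sigma(z)$ for the common face $z\in X$), compute the $n$-support coordinatewise from $\abs{\Sigma(x)}_n = p(x)$, and deduce the poset-map property of $\Sigma^\ast$ from the subcomplex inclusion together with $\abs{\Sigma^\ast(X)}=\abs{X}$. No substantive differences from the paper's argument.
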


Before proving this, we set some notation. Let $(\mc A, p)$, $(\mc A', p')$ be $n\mc P$-posets. Let $\mc A_0$ be any subset of $\mc A$. A \emph{subdivision} over $n\mc P$ is any map of sets $\Sigma : \mc A_0 \to \Subd(\mc A')$ such that $p(x) = \abs{\Sigma(x)}_n$ for all $x \in \mc A_0$. A canonical subdivision is therefore a subdivision $\Sigma : \mc A \to \Subd(\mc A')$ which is a poset map.

Let $\Sigma : \mc A_0 \to \Subd(\mc A')$ be a subdivision over $n\mc P$, and let $x \in \mc A_0$ and $y \in \mc A$ with $y \le x$. The \emph{restriction} of $\Sigma$ from $x$ to $y$, denoted by $\Sigma(x|y)$, is the unique $Y \in \Subd(\mc A')$ such that $Y \le \Sigma(x)$ and $\abs{Y}_n = p(y)$. Equivalently, this is the unique $Y \in \Subd(\mc A')$ such that $Y \le \Sigma(x)$ and $\abs{Y} = (\Sum \circ p)(y)$. We note the following:

\begin{prop} \label{prop:canonicalcriterion}
A subdivision $\Sigma : \mc A \to \Subd(\mc A')$ over $n\mc P$ is a poset map (and hence a canonical subdivision over $n\mc P$) if and only if $\Sigma(x|y) = \Sigma(y)$ for all $x$, $y \in \mc A$ such that $y \le x$.
\end{prop}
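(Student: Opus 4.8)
The plan is to prove both directions of the equivalence directly from the definitions of canonical subdivision, restriction, and the partial order on $\Subd(\mc A')$. The ``only if'' direction is essentially unwinding the definition of the restriction $\Sigma(x|y)$, while the ``if'' direction requires showing that the stated restriction property forces $\Sigma$ to be monotone.

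For the ``only if'' direction, suppose $\Sigma$ is a poset map and fix $y \le x$ in $\mc A$. Since $p$ is a poset map over $n\mc P$ with $\Sum \circ p$ a poset map to $\mc P$, we have $p(y) \le_{n\mc P} p(x)$, so $(\Sum \circ p)(y)$ is a face of $(\Sum \circ p)(x) = \abs{\Sigma(x)}$. Monotonicity of $\Sigma$ gives $\Sigma(y) \le_{\Subd(\mc A')} \Sigma(x)$, meaning $\Sigma(y)$ is a subcomplex of $\Sigma(x)$ whose support is a face of $\abs{\Sigma(x)}$. Moreover, since $\Sigma$ is a subdivision over $n\mc P$, $\abs{\Sigma(y)}_n = p(y)$, hence $\abs{\Sigma(y)} = (\Sum\circ p)(y)$. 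Thus $\Sigma(y)$ satisfies the two conditions defining $\Sigma(x|y)$, and by the uniqueness built into the definition of restriction, $\Sigma(x|y) = \Sigma(y)$. (I would want to note, or cite as part of the standing setup, that a subcomplex of a subdivision is determined by its support being a specified face — this is the uniqueness that makes $\Sigma(x|y)$ well-defined, and it is what makes this direction go through.)

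For the ``if'' direction, assume $\Sigma(x|y) = \Sigma(y)$ whenever $y \le x$, and let $y \le x$; I must show $\Sigma(y) \le_{\Subd(\mc A')} \Sigma(x)$. By the very definition of the restriction $\Sigma(x|y)$, the complex $\Sigma(x|y)$ is a subcomplex of $\Sigma(x)$ whose support equals $(\Sum\circ p)(y)$, which is a face of $\abs{\Sigma(x)} = (\Sum\circ p)(x)$ since $y \le x$ implies $p(y) \le_{n\mc P} p(x)$. That is precisely the relation $\Sigma(x|y) \le_{\Subd(\mc A')} \Sigma(x)$. Substituting the hypothesis $\Sigma(x|y) = \Sigma(y)$ yields $\Sigma(y) \le_{\Subd(\mc A')} \Sigma(x)$, so $\Sigma$ is a poset map; being also a subdivision over $n\mc P$, it is a canonical subdivision over $n\mc P$ by definition.

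I expect the main (though modest) obstacle to be the bookkeeping around well-definedness of $\Sigma(x|y)$: one must confirm that for $y \le x$ there genuinely is a unique $Y \le \Sigma(x)$ with $\abs{Y} = (\Sum\circ p)(y)$, i.e.\ that faces of $\abs{\Sigma(x)}$ correspond bijectively to subcomplexes of $\Sigma(x)$ with that support. This is a standard fact about polytopal complexes (a face of the support of a subdivision is itself subdivided, uniquely, by the cells of the subdivision contained in it), lifted along the poset isomorphism $p' : \Sigma(x) \to p'(\Sigma(x))$ that comes with $\Sigma(x)$ being an $\mc A'$-complex. Once that is in hand, both directions are one-line consequences, so I would present the proof compactly, perhaps recording the uniqueness observation as a one-sentence lemma or parenthetical before invoking it twice.
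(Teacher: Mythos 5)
Your proof is correct and follows essentially the same argument as the paper: the ``only if'' direction uses monotonicity plus $\abs{\Sigma(y)}_n = p(y)$ and the uniqueness in the definition of the restriction, and the ``if'' direction reads $\Sigma(y) = \Sigma(x|y) \le \Sigma(x)$ directly from the definition of $\Sigma(x|y)$. Your additional remark on why the restriction is well-defined is a reasonable elaboration of what the paper builds into its definition, not a different route.
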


\begin{proof}
If $\Sigma$ is a poset map and $x$, $y \in \mc A$ such that $y \le x$, then $\Sigma(y) \le \Sigma(x)$. Since $\abs{\Sigma(y)}_n = p(y)$ by the definition of a subdivision over $n\mc P$, we must have $\Sigma(x|y) = \Sigma(y)$ by the definition of $\Sigma(x|y)$. Conversely, if $\Sigma(x|y) = \Sigma(y)$ for all $x$, $y \in \mc A$ such that $y \le x$, then in particular $\Sigma(y) = \Sigma(x|y) \le \Sigma(x)$, so $\Sigma$ is a poset map.
\end{proof}

\begin{proof}[Proof of Proposition~\ref{prop:canonicalrefinement}]

Let $x$, $y \in X$ such that $(\Sum \circ p)(x)$ and $(\Sum \circ p)(y)$ share a face. Let $z$ be the element of $X$ such that $p(z)$ is this face, so $z \le x$ and $z \le y$. Since $\Sigma$ is a canonical subdivision, by Proposition~\ref{prop:canonicalcriterion} we have $\Sigma(x|z) = \Sigma(y|z) = \Sigma(z)$. Thus, the polytopal complexes $(\Sum \circ p')(\Sigma(x))$ and $(\Sum \circ p')(\Sigma(y))$ have the subdivision $(\Sum \circ p')(\Sigma(z))$ as their common intersection. This implies that $\Sigma^\ast(X)$ is indeed an $\mc A'$-complex. 

We next show that $\abs{\Sigma^\ast(X)}_n = \abs{X}_n$. We have
\[
\abs{\Sigma^\ast(X)}_n = (Q_1,\dots,Q_n)
\]
where
\begin{align*}
Q_k &= \bigcup_{(P_1,\dots,P_n) \in p ((\Sigma^\ast(X))} P_k \\
&= \bigcup_{x \in X} \bigcup_{(P_1,\dots,P_n) \in p (\Sigma(x))} P_k \\
&= \bigcup_{x \in X} (k^\text{th} \text{ entry of } \abs{\Sigma(x)}_n) \\
&= \bigcup_{x \in X} (k^\text{th} \text{ entry of } p(x)) \\
&= \bigcup_{(P_1,\dots,P_n) \in p(X)} P_k \\
&= k^\text{th} \text{ entry of } \abs{X}_n
\end{align*}
where we have used in the fourth line that $p(x) = \abs{\Sigma(x)}_n$ since $\Sigma$ is a poset map over $n\mc P$. Hence $\abs{\Sigma^\ast(X)}_n = \abs{X}_n$, as desired.

It follows that we have a map $\Sigma^\ast : \Subd(\mc A) \to \Subd(\mc A')$ that is a subdivision over $n \mc P$. To show that it is canonical, we need to show it is a poset map. Let $X$, $Y \in \Subd(\mc A)$ such that $Y \le X$. Thus $\abs{Y}$ is a face of $\abs{X}$, and hence $\abs{\Sigma^\ast(Y)}$ is a face of $\abs{\Sigma^\ast(X)}$ since $\abs{\Sigma^\ast(X)} = \abs{X}$ for all $X \in \Subd(\mc A)$. Moreover, since $Y$ is a subcomplex of $X$, it is easy to see from the definition of $\Sigma$ that $\Sigma(Y)$ is a subcomplex of $\Sigma(X)$. Thus $\Sigma^\ast(Y) \le \Sigma^\ast(X)$, as desired.
\end{proof}

\begin{exam}
For any perfect $n \mc P$-poset $(\mc A,p)$, recall the trivial subdivision $\triv_{\mc A} : \mc A \to \Subd(\mc A)$ given by $\triv_{\mc A}(x) = \langle x \rangle_{\mc A}$. By Proposition~\ref{prop:nsupporttrivial}, we have $\abs{\triv_{\mc A}(x)}_n = p(x)$, hence $\triv_{\mc A}$ is a subdivision over $n \mc P$. It is also clear that if $x$, $y \in \mc A$ with $y \le x$ then $\triv_{\mc A}(x|y) = \triv_{\mc A}(y)$, so $\triv_{\mc A}$ is a canonical subdivision over $n \mc P$.
\end{exam}

\subsection{Confluent subdivisions} \label{sec:confluent}

In this paper we will construct very complicated subdivisions recursively from smaller, simpler subdivisions. The purpose of this section is to give a systematic way to prove that the final subdivisions are well-defined and canonical. The key idea is the notion of confluence and Newman's lemma.

Let $(\mc A, p)$ be an $n\mc P$-poset. Let $\{\mc A_\alpha\}$ be a (possibly infinite) collection of subsets of $\mc A$. For each $\alpha$, let $\sigma_\alpha : \mc A_\alpha \to \mc \Subd(\mc A)$ be a subdivision over $n\mc P$.

Let $X$ be any finite subset of $\mc A$. Suppose $x$ is an element of $X$ and $x \in \mc A_\alpha$ for some $\alpha$. We define a \emph{$\sigma_\alpha$-move} on $X$ at $x$ to be the act of transforming $X$ into the set
\[
X' := X \setminus \{x\} \cup \Max_{\mc A}\sigma_\alpha(x).
\]
We write this move as $X \xrightarrow{x,\sigma_\alpha} X'$, or simply $X \to X'$ if we do not need to specify $(x,\sigma_\alpha)$. We call a move $X \to X'$ \emph{non-trivial} if $X \neq X'$.

Given another finite set $Y \subset \mc A$, we write $X \xrightarrow{\ast} Y$ if there exists a sequence of moves $X \to X_1 \to X_2 \to \dots \to Y$. We allow this sequence to contain only one term; in other words, we always have $X \xrightarrow{\ast} X$.

Suppose that $X$, $Y \subset \mc A$ are finite. We say that $X$ and $Y$ are \emph{joinable} if there exists $Z \subset \mc A$ such that $X \xrightarrow{\ast} Z$ and $Y \xrightarrow{\ast} Z$.

We say that the family of subdivisions $\{\sigma_\alpha\}$ is \emph{locally confluent} if for any finite $X \subset \mc A$ and any two moves $X \to Y_1$, $X \to Y_2$, we have that $Y_1$ and $Y_2$ are joinable. Note that this is equivalent to saying that for all $x \in \mc A$ and $\alpha$, $\beta$ such that $x \in \mc A_\alpha \cap \mc A_\beta$, we have that $\Max \sigma_\alpha(x)$ and $\Max \sigma_\beta(y)$ are joinable.

We call a set $X \subset \mc A$ \emph{terminal} if there are no non-trivial moves from $X$. We call an element $x \in \mc A$ \emph{terminal} if $\{x\}$ is terminal. An element $x$ is terminal if and only if for every $\alpha$ such that $x \in \mc A_\alpha$, we have $x \in \sigma_\alpha(x)$. (This is because $\sigma_\alpha(x)$ is an $\mc A$-subdivision of $(\Sum \circ p)(x)$, so if $x \in \sigma_\alpha(x)$, then $x$ must be the only maximal element of $\sigma_\alpha(x)$.) From this, it follows that a set is terminal if and only if all its elements are terminal.
 
We say that $\{\sigma_\alpha\}$ is \emph{terminating} if there are is no infinite sequence $X_1 \to X_2 \to X_3 \to \dots$ of non-trivial moves.

Finally, we say that $\{\sigma_\alpha\}$ is \emph{facially compatible} if the following two properties hold:
\begin{itemize}
\item If $x$, $y \in \mc A$ such that $y \le x$ and $x \in \mc A_\alpha$, then $\Max\sigma_\alpha(x|y)$, $\{y\}$ are joinable.
\item If $x$, $y \in \mc A$ such that $y \le x$ and $x$ is terminal, then $y$ is terminal.
\end{itemize}

Our main result is the following.

\begin{thm} \label{thm:confluence}
Let $(\mc A, p)$ be a perfect $n\mc P$-poset and let $\{\sigma_\alpha : \mc A_\alpha \to \Subd(\mc A)\}$ be a family of locally confluent, facially compatible, and terminating subdivisions. Then for any $x \in \mc A$, there is a unique terminal set $S(x) \subset \mc A$ such that $\{x\} \xrightarrow{\ast} S(x)$. Moreover, $\Sigma(x) := \langle S(x) \rangle_{\mc A}$ is an $\mc A$-subdivision, and the map $\Sigma : \mc A \to \Subd(\mc A)$ given by $x \mapsto \Sigma(x)$ is a canonical subdivision over $n \mc P$.
\end{thm}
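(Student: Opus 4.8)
The plan is to treat the relation of non-trivial moves as an abstract rewriting system on the finite subsets of $\mc A$ and run Newman's lemma. By hypothesis this system is terminating, and by hypothesis it is locally confluent; Newman's lemma then gives global confluence, i.e.\ $X \xrightarrow{\ast} Y_1$ and $X \xrightarrow{\ast} Y_2$ imply $Y_1$, $Y_2$ joinable, and together with termination this yields for every finite $X \subset \mc A$ a \emph{unique} terminal set $S(X)$ with $X \xrightarrow{\ast} S(X)$. Setting $S(x) := S(\{x\})$ proves the first assertion. Alongside this I would record two easy facts. (i)~A move at $x \in X$ leaves $X \setminus \{x\}$ untouched, so one can reduce the elements of a finite set to terminal form one after another; since a finite set is terminal iff each of its elements is, this exhibits $\bigcup_{z \in X} S(z)$ as a terminal set reachable from $X$, so by uniqueness $S(X) = \bigcup_{z \in X} S(z)$ and hence $\langle S(X)\rangle_{\mc A} = \bigcup_{z \in X}\langle S(z)\rangle_{\mc A}$. (ii)~Replacing $x$ by $\Max_{\mc A}\sigma_\alpha(x)$ does not change $\bigcup_{z} p(z)_k$ for any $k$, because $\sigma_\alpha(x)$ is an $\mc A$-subdivision with $n$-support $p(x)$ and every cell of it lies below a maximal cell; so $\abs{\cdot}_n$ is a move-invariant, and reducing $\{x\}$ to $S(x)$ gives $\abs{\langle S(x)\rangle_{\mc A}}_n = p(x)$.

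What remains is to show that $\Sigma(x) := \langle S(x)\rangle_{\mc A}$ is an $\mc A$-complex (hence, by the previous paragraph, an $\mc A$-subdivision of $(\Sum \circ p)(x)$), and that $\Sigma$ is a poset map; by Proposition~\ref{prop:canonicalcriterion} the latter is the statement that the subcomplex of $\langle S(x)\rangle_{\mc A}$ supported on the face $(\Sum \circ p)(y)$ equals $\langle S(y)\rangle_{\mc A}$ for every $y \le_{\mc A} x$. I would prove these two statements simultaneously, packaged as a single assertion $\Psi(x)$, by Noetherian induction along the relation ``$x \succ c$ if $x$ is non-terminal and $c \in \Max_{\mc A}\sigma_\alpha(x)$ for some $\alpha$ with $x \in \mc A_\alpha$''; this relation is well-founded, since an infinite descending chain would give an infinite sequence of non-trivial moves, contradicting termination. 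In the base case $x$ is terminal, so $S(x) = \{x\}$ and $\langle S(x)\rangle_{\mc A} = \langle x \rangle_{\mc A}$, which is an $\mc A$-complex because $\mc A$ is perfect; and for $y \le x$, the second clause of facial compatibility forces $y$ terminal, so $\langle S(y)\rangle_{\mc A} = \langle y \rangle_{\mc A}$ is exactly the subcomplex of $\langle x \rangle_{\mc A}$ supported on $(\Sum \circ p)(y)$.

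For the inductive step let $x$ be non-terminal and fix a non-trivial move $\{x\} \to \Max\sigma_\alpha(x)$, so that $S(x) = S(\Max\sigma_\alpha(x)) = \bigcup_{m} S(m)$ and $\langle S(x)\rangle_{\mc A} = \bigcup_{m}\langle S(m)\rangle_{\mc A}$, where $m$ runs over $\Max_{\mc A}\sigma_\alpha(x)$; since $x \succ m$ for each such $m$, the inductive hypothesis $\Psi(m)$ is available. The supports $(\Sum \circ p)(m)$ are the maximal cells of the $\mc A$-subdivision $\sigma_\alpha(x)$ of $(\Sum \circ p)(x)$ and so form a polytopal complex; and whenever two of them meet in a common face $(\Sum \circ p)(c)$ (with $c \le m$ and $c \le m'$ inside $\sigma_\alpha(x)$), the restriction clauses of $\Psi(m)$ and $\Psi(m')$ both identify the portion of $\langle S(m)\rangle_{\mc A}$, resp.\ $\langle S(m')\rangle_{\mc A}$, on that face with $\langle S(c)\rangle_{\mc A}$. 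By the standard fact that mutually compatible subdivisions of the cells of a polytopal complex glue to a polytopal complex, $\langle S(x)\rangle_{\mc A}$ is an $\mc A$-complex. For the restriction clause of $\Psi(x)$, fix $y \le x$, set $G := (\Sum \circ p)(y)$, and let $\sigma_\alpha(x|y) \le \sigma_\alpha(x)$ be the restriction of $\sigma_\alpha(x)$ to $G$; using $\Psi(m)$ to compute the portion of each $\langle S(m)\rangle_{\mc A}$ lying in $G$, together with the fact that every cell of $\sigma_\alpha(x|y)$ is a face of some $m \in \Max\sigma_\alpha(x)$, one identifies the subcomplex of $\langle S(x)\rangle_{\mc A}$ supported on $G$ with $\bigcup_{m' \in \Max\sigma_\alpha(x|y)}\langle S(m')\rangle_{\mc A} = \langle S(\Max\sigma_\alpha(x|y))\rangle_{\mc A}$. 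Now the first clause of facial compatibility says $\Max\sigma_\alpha(x|y)$ is joinable with $\{y\}$, so by confluence and uniqueness of terminal forms $S(\Max\sigma_\alpha(x|y)) = S(y)$, and the subcomplex in question is $\langle S(y)\rangle_{\mc A}$. This completes the induction, and together with Proposition~\ref{prop:canonicalcriterion} it shows $\Sigma$ is a canonical subdivision over $n\mc P$.

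I expect the main difficulty to be exactly the fact that the two halves of $\Psi$ must be proved together: gluing the reduced pieces $\langle S(m)\rangle_{\mc A}$ into a complex requires knowing they already agree along common faces, which is the restriction clause of the hypothesis, while the restriction clause is itself what canonicity demands. A related trap to avoid is trying to maintain the complex property move-by-move — a partially-reduced configuration, in which some cells of $\sigma_\alpha(x)$ have been subdivided but neighbouring ones have not, is in general not a complex — so one must argue through the decomposition $S(X) = \bigcup_{z} S(z)$ rather than step by step. Facial compatibility is the hypothesis carrying the argument: the first clause is what makes the reduced subdivisions of adjacent cells match along a shared face (each reduces to $S$ of that face), and the second clause is what lets the induction close up at terminal cells.
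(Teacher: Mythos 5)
Your proposal is correct and follows essentially the same route as the paper: Newman's lemma for existence and uniqueness of $S(x)$, well-founded induction along the relation ``$y$ is produced from $x$ by a non-trivial move'' (with termination giving well-foundedness), the base case handled by perfection of $\mc A$ and the second clause of facial compatibility, and the inductive step gluing the pieces $\langle S(m)\rangle_{\mc A}$ and using the first clause of facial compatibility together with confluence to get $\Sigma(x|y)=\Sigma(y)$, exactly as in the paper's proof (your move-invariance argument for the $n$-support is a minor repackaging of what the paper extracts from the proof of Proposition~\ref{prop:canonicalrefinement}).
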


\begin{proof}
The fact that $S(x)$ exists and is unique follows directly from Newman's diamond lemma, which states that any locally confluent and terminating binary relation is globally confluent \cite{New42}. (In this case, the binary relation is non-trivial moves ``$\to$''.)

It remains to show that $\Sigma(x)$ is an $\mc A$-subdivision, and the map $\Sigma : \mc A \to \Subd(\mc A)$ is a canonical subdivision over $n \mc P$. Define a binary relation $\prec$ on $\mc A$ by $y \prec x$ if there is a non-trivial move $\{x\} \to Y$ such that $y \in Y$. We prove the following:

{\bf Claim:} $\prec$ is a well-founded relation on $\mc A$.

\begin{proof}[Proof of claim]
Suppose that $x_1 \succ x_2 \succ x_3 \succ \dots$ is an infinite descending sequence. Define a sequence of non-trivial moves $X_1 \to X_2 \to \dots$, inductively as follows. Define $X_1 = \{x_1\}$. Next fix $k \ge 2$, and assume by induction that we have constructed $X_1 \to \dots \to X_{k-1}$ such that $x_j \in X_j$ for all $1 \le j \le k-1$. By definition of $\prec$, there exists a non-trivial move $x_{k-1} \xrightarrow{(x_{k-1},\sigma_\alpha)} Y_k$ such that $x_k \in Y_k$. Define $X_k$ by $X_{k-1} \xrightarrow{(x_{k-1},\sigma_\alpha)} X_k = X_{k-1} \setminus \{x_{k-1}\} \cup Y_k$. Since the move $x_{k-1} \to Y_k$ is non-trivial, we have $x_{k-1} \notin Y_k$. Thus $x_{k-1} \notin X_k$, so $X_{k-1} \neq X_k$. Hence we have a non-trivial move $X_{k-1} \to X_k$ with $x_k \in X_k$, completing the induction. However, this contradicts the terminating property, proving the claim.
\end{proof}

We now prove by induction on $\prec$ that
\begin{enumerate}[label=(\alph*)]
\item $\Sigma(x)$ is an $\mc A$-subdivision
\item $\abs{\Sigma(x)}_n = p(x)$
\item $\Sigma(x|y) = \Sigma(y)$ for all $y \le x$.
\end{enumerate}
By Proposition~\ref{prop:canonicalcriterion}, this will complete the proof.

For the base case, assume that $x$ is terminal. Thus $S(x) = \{x\}$, so $\Sigma(x) = \langle x \rangle_{\mc A} = \triv_{\mc A}(x)$, since $\mc A$ is perfect. Hence $\Sigma(x)$ is an $\mc A$-subdivision, proving (a).  Proposition~\ref{prop:nsupporttrivial} implies (b). Suppose $y \le x$. By facial compatibility, $y$ is also terminal, so $\Sigma(y) = \triv_{\mc A}(y)$. Since $\triv_{\mc A}(x|y) = \triv_{\mc A}(y)$, (c) is proved.

For the inductive step, assume that $x$ is not terminal. Let $\{x\} \xrightarrow{(x,\sigma_\alpha)} Y$ be a non-trivial move. In particular, we have $x \notin Y$. Then $S(x) = \bigcup_{y \in Y} S(y)$, and hence $\Sigma(x) = \bigcup_{y \in Y} \Sigma(y)$. By induction, we have proved (a)-(c) for all $y \in Y$. Thus, the proof of Proposition~\ref{prop:canonicalrefinement} implies (a) and (b) for $x$. For (c), let $y \le x$. By facial compatibility, we have that $\Max\sigma_\alpha(x|y)$ and $\{y\}$ are joinable. It follows that
\[
S(y) = \bigcup_{z \in \Max\sigma_\alpha(x|y)} S(z)
\]
and hence
\begin{equation} \label{eq:Sigmay}
\Sigma(y) = \bigcup_{z \in \Max\sigma_\alpha(x|y)} \Sigma(z).
\end{equation}
For every $z \in \Max\sigma_\alpha(x|y)$, there is $z' \in \Max \sigma_\alpha(x) = Y$ such that $z \le z'$. Since $z' \neq x$ by assumption that $\{x\} \to Y$ is non-trivial, we have $z' \prec x$, and therefore by induction $\Sigma(z) = \Sigma(z'|z)$. Hence $\Sigma(y)$ is a union of elements of the form $\Sigma(z'|z)$ with $z' \in Y$. Since $\Sigma(x) = \bigcup_{z' \in Y} \Sigma(z')$ and $\Sigma(z'|z)$ is a subcomplex of $\Sigma(z')$, it follows that $\Sigma(y)$ is a subcomplex of $\Sigma(x)$. On the other hand, the formula \eqref{eq:Sigmay} implies the support of $\Sigma(y)$ is $\abs{y}$. So we must have $\Sigma(x|y) = \Sigma(y)$, completing the proof.
\end{proof}

\subsection{Examples of canonical subdivisions} \label{sec:canonicalexamples}

In this section we give two examples of well-known subdivisions which can be expressed as canonical subdivisions in our sense. This section can be skipped without logically affecting the main proof, but the ideas may be useful for understanding the later arguments.

\subsubsection{Pulling triangulations}

Let $(\mc S, \conv)$ be the $\mc P$-poset from Example~\ref{exam:pointsets}. Let $A \in \mc S$. A \emph{covector} of $A$ is a point $x \in A$ such that $\dim(A \setminus \{x\}) < \dim(A)$. (Here, $\dim(A)$ denotes the dimension of the smallest affine subspace containing $A$.) Every element of $A$ is a covector if and only if $A$ is affinely independent, i.e. $A$ is the set of vertices of a simplex.

Let $\mc S^\ast$ be the set of elements of $\mc S$ which are not affinely independent. We define a subdivision $\pull : \mc S^\ast \to \Subd(\mc S)$ as follows. Let $A \in \mc S^\ast$, and let $x$ be the smallest element of $A$ (according to the order on $A$) which is not a covector. We define $\pull(A)$ to be the set of all $B$ and $x \cup B$ such that $B \le_{\mc S} A$ and $x \notin B$. Then $\pull(A)$ is an $\mc S$-subdivision of $\abs{A}$, so we have a subdivision $\pull : \mc S^\ast \to \Subd(\mc S)$ over $\mc P$.

We now claim that the family $\{\pull\}$ consisting of a single subdivision is locally confluent, terminating, and facially compatible. Local confluence is trivial since the family has only one subdivision. If $A \in \mc S^\ast$ and $B \in \Max \pull(A)$, then $B \subsetneq A$, which proves termination. Finally, suppose $A \in \mc S^\ast$ and $B \le_{\mc S} A$. If $x \notin B$, then $\pull(A|B) = \triv_{\mc S}(B)$. If $x \in B$, then we can check that
\[
\pull(A|B) = \begin{dcases*}
\pull(B) & if $x$ is not a covector of $B$ \\
\triv_{\mc S}(B) & otherwise.
\end{dcases*}
\]
In all cases, $\Max\pull(A|B)$ and $\{B\}$ are joinable, proving the first condition of facial compatibility. For the second property, we note that $A$ is terminal if and only if $A \notin \mc S^\ast$, i.e. $A$ is affinely independent. Clearly if this holds for $A$ then it holds for $B$, completing the proof.

Thus, by Theorem~\ref{thm:confluence}, we have a canonical subdivision $\Pull : \mc S \to \Subd(\mc S)$ such that if $A \in \mc S$ and $B \in \Pull(A)$, then $B$ is terminal, i.e. affinely independent. This subdivision is known in the literature as the \emph{pulling triangulation}.

\subsubsection{Dicing}

Let $\ms H$ be a finite set of hyperplanes in $\bb R^d$. For each $H \in \ms H$, let $\mc P_H$ be set of polytopes in $\bb R^d$ whose relative interior intersects $H$. We define a subdivision $\dice_H : \mc P_H \to \Subd(\mc P)$ as follows. Let $H_1$, $H_2$ be the two closed half-spaces of $\bb R^d$ cut out by $H$. For $P \in \mc P_H$, we define
\[
\dice_H(P) = \triv(P \cap H_1) \cup \triv(P \cap H_2)
\]
Then $\dice_H(P)$ is a polytopal subdivision of $P$ and $\dice_H : \mc P_H \to \Subd(\mc P)$ is a subdivision over $\mc P$.

We claim that the family $\{ \dice_H \}_{H \in \ms H}$ is locally confluent, terminating, and facially compatible. We start with terminating. If $A \in \mc P_H$ and $B \in \Max\dice(A)$, then $B \notin \mc P_H$. Since $\ms H$ is finite, this implies $\{ \dice_H \}_{H \in \ms H}$ is terminating. We next show facial compatibility. Let $A \in \mc P_H$ and let $B \le_{\mc P} A$. If $B \in \mc P_H$, then $\dice_H(A|B) = \dice_H(B)$. Otherwise, $\dice_H(A|B) = \triv(B)$. Either way, $\Max \dice_H(A|B)$ and $\{B\}$ are joinable, proving the first condition of facial compatibility. Next, note that $A$ is terminal if and only if its relative interior does not intersect any $H \in \ms H$. If this holds for $A$ then it clearly holds for $B$, proving facial compatibility.

Finally, we prove local confluence. Suppose $P \in \mc P_{G} \cap \mc P_{H}$ for distinct $G$, $H \in \ms H$. Consider $\Max \dice_G(P)$ and $\Max \dice_H(P)$. If we apply a $\dice_H$-move to both elements of $\Max \dice_G(P)$, then we obtain the same result as when we apply $\dice_G$-move to both elements of $\Max \dice_H(P)$. Thus $\Max \dice_G(P)$ and $\Max \dice_H(P)$ are joinable, so $\{ \dice_H \}_{H \in \ms H}$ is locally confluent.

Thus, by Theorem~\ref{thm:confluence}, we have a canonical subdivision $\Dice : \mc P \to \Subd(\mc P)$ such that for all $P \in \mc P$ and $Q \in \Dice(P)$, the relative interior of $Q$ does not intersect any $H \in \ms H$. This is of course the subdivision obtained by intersecting $P$ with each of the closed regions of $\bb R^d$ cut out by $\ms H$.

\section{Cayley polytopes} \label{sec:Cayleypolytopes}

\subsection{Notation}

Before proceeding, we set some notation regarding tuples and matrices. Let $a = (a_1, \dots, a_m)$ be an $m$-tuple (entry type unspecified). We allow $m=0$, in which case the tuple has no entries. We define $\abs{a} := m$. For $I \subset [m]$, we use $a_I$ to denote the tuple $(a_{i_1}, \dots, a_{i_k})$ where $I = \{i_1,\dots,i_k\}$ and $i_1 < \dots < i_k$. We use $a_{\setminus j}$ to denote $a_{[m] \setminus \{j\}}$.

Similarly, let $a = (a_{ij})_{i,j=1}^{m,n}$ be an $m \times n$ matrix and $I \subset [m]$, $J \subset [n]$. We allow $m=0$ or $n=0$, in which case the matrix has no entries. We use $a_{I \times \bullet}$ to denote the matrix obtained by restricting $a$ to the rows indexed by $I$, preserving the order of the rows. We similarly define $a_{\bullet \times J}$ and $a_{I \times J}$. We use $a_{\setminus i \times \bullet}$ to denote $a_{I \setminus \{i\} \times \bullet}$, and similarly define $a_{\bullet \times \setminus j}$ and $a_{\setminus i \times \setminus j}$.

\subsection{Cayley sums} \label{sec:Cayley}


Let $P = (P_1,\dots,P_m)$ be an $m$-tuple of polytopes in $\bb R^d$, where $m \ge 1$. We say that $P$ is in \emph{Cayley position} if there exists a linear map $\pi: \bb R^d \to \bb R^d$ such that $\pi(P_i)$ is a point for all $i$ and the sequence of points $(\pi(P_i))_{i \in [m]}$ is affinely independent. In this situation, we define the \emph{Cayley sum} $\Cay(P)$ to be the convex hull of the entries of $P$.

The faces of a Cayley sum $\Cay(P)$ are precisely Cayley sums of the form $\Cay(F_I)$, where $I$ is a nonempty subset of $[m]$ and $F = (F_1, \dots, F_m)$ where $F \le P$. (Recall that $F \le P$ means that there exists a linear functional such that $F_1$, \dots, $F_m$ are the faces of $P_1$, \dots, $P_m$, respectively, with respect to this linear functional.)

Let $S = (S_1,\dots,S_n)$ be an $n$-tuple of independent simplices (recall the definition of independence from Section~\ref{sec:polytopes}), and let $a = (a_{ij})_{i,j=1}^{m,n}$ be an $m \times n$ matrix of nonnegative integers, where $m \ge 1$ and $n \ge 0$. We consider Cayley sums of the form $\Cay(P)$, where $P = (P_1,\dots,P_m)$ is an $m$-tuple of polytopes in Cayley position, and for each $i \in [m]$ we have
\[
P_i = p_i + \sum_{j = 1}^n a_{ij} S_j
\]
for some $p_i \in \bb R^d$. The faces of $\Cay(P)$ are as follows. Let $S' = (S'_1,\dots,S'_n)$ be any $n$-tuple of polytopes such that $S'_j \le S_j$ for all $j$. Since the entries of $S$ are independent, this is equivalent to $S' \le S$. Let $P' = (P_1',\dots,P_m')$ be the tuple with
\[
P_i' = p_i + \sum_{j=1}^n a_{ij} S_j'.
\]
Then $P' \le P$. Thus, for any nonempty $I \subset [m]$, $\Cay(P'_I)$ is a face of $\Cay(P)$. All faces of $\Cay(P)$ arise this way.

\subsection{The poset $\mc C$}

Let $\mc C_0$ be the poset defined as follows. Its elements are all tuples $(p,S,a)$
where
\begin{itemize}
\item $p = (p_1,\dots,p_m)$ is a tuple of points in $\bb R^d$, for some positive integer $m$.
\item $S = (S_1,\dots,S_n)$ is a tuple of independent ordered integral simplices in $\bb R^d$, for some nonnegative integer $n$.
\item $a = (a_{ij})_{i,j=1}^{m,n}$ is an $m \times n$ matrix of nonnegative integers.
\item The polytopes $P_1$, \dots, $P_m$ are in Cayley position, where
\[
P_i := p_i + \sum_{j = 1}^n a_{ij} S_j.
\]
\end{itemize}
We equip $\mc C_0$ with the partial order $\le _{\mc C_0}$, where
\[
(p_I,S',a_{I \times \bullet}) \le_{\mc C_0} (p,S,a)
\]
if $I$ is a nonempty subset of $[\abs{p}]$ and $S' \le S$. It is easy to see that this is a partial order.

Let $\Cay : \mc C_0 \to \mc P$ be the map defined by
\[
\Cay(p,S,a) = \Cay \left(p_i + \sum_{j = 1}^n a_{ij} S_j \right)_{i \in [\abs{p}]}
\]
By the discussion from the previous subsection, this is a poset map. Thus $(\mc C_0, \Cay)$ is a $\mc P$-poset.

We now define an equivalence relation $\sim$ on $\mc C_0$ as follows. If $j \in [\abs{S}]$ is such that $S_j$ is a point or $a_{ij} = 0$ for all $i \in [\abs{p}]$, then we set
\[
(p,S,a) \sim ((p_i+a_{ij}S_j)_{i \in [\abs{p}]},S_{\setminus j},a_{\bullet \times \setminus j}).
\]
We define $\mc C$ to be $\mc C_0 / \sim$.

For $A$, $B \in \mc C$, we let $A \le_{\mc C} B$ if there are representatives $A_0$, $B_0$ of $A$ and $B$, respectively, in $\mc C_0$ such that $A_0 \le_{\mc C_0} B_0$. It is straightforward to check that this defines a partial order on $\mc C$ (for example, using standard form defined in the next subsection). Furthermore, we have that $\Cay : \mc C_0 \to \mc P$ is constant on equivalence classes of $\sim$. Thus there is a well-defined map $\Cay : \mc C \to \mc P$, this is a poset map, and $(\mc C, \Cay)$ is a $\mc P$-poset.

If there is no risk of confusion, we will abuse notation and denote elements in $\mc C$ using their representatives in $\mc C_0$.

\subsubsection{Standard form and $L(A)$}

Given an object $A \in \mc C$, there is a unique representative $(p,S,a)$ of $A$ in $\mc C_0$ such that for all $j$, the $j$-th column of $a$ is not all zeroes. We call this the \emph{standard form} of $A$. It is easy to check that if $A_0$ is the standard form of $A \in \mc C$, then $B \le_{\mc C} A$ if and only if $B$ has a representative $B_0 \in \mc C_0$ such that $B_0 \le_{\mc C_0} A_0$. (In fact, this is true if $A_0$ is any representative of $A$, which can be proved from the previous sentence.)


Using standard form, it is not hard to see that if $A \in \mc C$ and we have two different elements $B$, $C \in \mc C$ such that $B \le_{\mc C} A$ and $C \le_{\mc C} A$, then $\Cay(B)$ and $\Cay(C)$ are different faces of $\Cay(A)$. Moreover, for every face $F$ of $\Cay(A)$, there is a face $B$ of $A$ such that $\Cay(B) = F$. It follows that $\mc C$ is a perfect $\mc P$-poset.

Let $A \in \mc C$ with standard form $(p,S,a)$.
For $j \in [\abs{S}]$, let $v_j$ be the first vertex of $S_j$. Define
\[
S_0(A) := \conv \left( p_i + \sum_{j=1}^{\abs{S}} a_{ij} v_j \right)_{i \in [\abs{p}]}.
\]
In other words, $S_0(A)$ is the face of $\Cay(A)$ whose vertices are the first vertices of each Cayley summand of $\Cay(A)$.

Note that $S_0(A)$, $S_1$, \dots, $S_{\abs{S}}$ are independent simplices. We define
\begin{align*}
L(A) &:= L(S_0(A) + S_1 + \dots + S_{\abs{S}}) \\
&= L(S_0(A)) \oplus L(S_1) \oplus \dots \oplus L(S_{\abs{S}}).
\end{align*}

\subsection{$\gamma_T$ subdivisions} \label{sec:T}

\subsubsection{Definition of $\gamma_T$}

Let $T$ be an ordered integral simplex of dimension at least 1. Let $\mc C_T$ be the set of elements of $\mc C$ whose standard form $(p,S,a)$ has the property that $T$ is an entry of $S$. In this section we construct a subdivision $\gamma_T : \mc C_T \to \Subd(\mc C)$ over $\mc P$ such that the family $\{\gamma_T\}$ of all such subdivisions satisfies the conditions of Theorem~\ref{thm:confluence}. These subdivisions will be a main building block for future subdivisions.

Let $A \in \mc C_T$ with standard form $(p,S,a)$. Let $j$ be the unique number such that $S_j = T$, and let $i$ be the smallest number such that $a_{ij} = \max_{i'j} a_{i'j}$. Let $v$ be the first vertex of $T$, and let $f$ be the facet of $T$ opposite $v$.

Note that for any positive integer $c$, the dilated simplex $cT$ can be written as the union of the two polytopes
\[
v + (c-1)T, \quad \Cay((c-1)f, cf).
\]
(Here, we are using the ordinary Cayley sum as defined in Section~\ref{sec:Cayley}.) This can be extended to a $\mc C$-subdivision of $\Cay(A)$ as follows. Recall that the standard form of $A$ is $(p,S,a)$. Let $m := \abs{p}$ and $n := \abs{S}$.
We define
\begin{equation} \label{eq:A'A''}
\begin{aligned}
A' &= (p',S,a') \in \mc C \\
A'' &= (p'', F, a'') \in \mc C
\end{aligned}
\end{equation}
where
\begin{itemize}
\item $p'$ is the $m$-tuple obtained by replacing the $i$-th entry of $p$ with $p_{i} + v$.
\item $a'$ is the $m \times n$ matrix obtained by subtracting 1 from the $(i,j)$ entry of $a$.
\item $p''$ is the $(m+1)$-tuple obtained by inserting $p_{i} + v$ directly before the $i$-th entry of $p$.
\item $F$ is the $n$-tuple obtained by replacing the $j$-th entry of $S$ with $f$.
\item $a''$ is the $(m+1) \times n$ matrix obtained by inserting the $i$-th row of $a'$ directly above the $i$-th row of $a$.
\end{itemize}

We define
\[
\gamma_T(A) := \triv_{\mc C}(A') \cup \triv_{\mc C}(A'').
\]
Then $\gamma_T(A)$ is a $\mc C$-subdivision of $\Cay(A)$. If $a_{ij} = 1$ and $a_{i'j} = 0$ for all $i' \neq i$, then this subdivision has a unique maximal element $A''$. Otherwise, the maximal elements are $A'$ and $A''$.

\subsubsection{Properties of $\gamma_T$}

We first prove the following:

\begin{prop} \label{prop:LgammaT}
Let $A \in \mc C_T$ and $B \in \Max \gamma_T(A)$. Then $L(A) = L(B)$.
\end{prop}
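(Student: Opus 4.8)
The plan is to compute $L(A')$ and $L(A'')$ explicitly and show each equals $L(A)$, using the decomposition $L(A) = L(S_0(A)) \oplus L(S_1) \oplus \dots \oplus L(S_n)$ from the definition. The key observation is that $L(A)$ depends only on the first vertices $v_j$ of each $S_j$ together with the lattices $L(S_j)$, and the operations defining $A'$ and $A''$ are designed to leave this data essentially unchanged. First I would handle $A'$: here the simplex tuple $S$ is unchanged, so $L(S_j)$ is untouched for all $j$; only $p_i$ is replaced by $p_i + v$ (where $v$ is the first vertex of $T = S_j$) and $a_{ij}$ is decreased by $1$. So the $i$-th defining point of $S_0(A')$ is $p_i + v + \sum_{k} a'_{ik} v_k = p_i + v + (a_{ij}-1)v_j + \sum_{k \neq j} a_{ik} v_k = p_i + \sum_k a_{ik} v_k$, which is exactly the $i$-th defining point of $S_0(A)$. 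Hence $S_0(A') = S_0(A)$, and therefore $L(A') = L(A)$.

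Next I would handle $A''$. Here $S_j = T$ is replaced by the facet $f$ of $T$ opposite $v$, and we must be careful because $f$ is a lower-dimensional simplex, so the first vertex of $f$ is generally not $v$, and $L(f) \neq L(T)$ in general. However, the key point is that $A''$ has one extra Cayley summand, indexed by the newly inserted point $p_i + v$, and one must check that the independence and standard-form conditions still hold. I would compute $S_0(A'')$: its defining points are the first vertices of all $m+1$ Cayley summands of $\Cay(A'')$. For the inserted summand (row inserted above row $i$, with entries from $a'$), the first vertex is $p_i + v + \sum_k a'_{ik} w_k$ where $w_k$ is the first vertex of the $k$-th entry of $F$; for $k \neq j$ this is $v_k$, and for $k = j$ it is the first vertex of $f$. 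Then I would argue that $L(S_0(A'')) \oplus L(f) = L(S_0(A)) \oplus L(T)$ as sublattices of $\bb Z^d$, using that $T = \Cay(v, $ something related to $f)$ — more precisely, $L(T) = L(f) \oplus \bb Z(v - (\text{first vertex of } f))$ as $T$ is a simplex with vertex set $\{v\} \cup \operatorname{Vert}(f)$, and the extra generator $v - (\text{first vertex of }f)$ is exactly what gets absorbed into the new Cayley summand of $S_0(A'')$. Combining, $L(A'') = L(S_0(A'')) \oplus L(f) \oplus \bigoplus_{k\neq j} L(S_k) = L(S_0(A)) \oplus L(T) \oplus \bigoplus_{k \neq j} L(S_k) = L(A)$.

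The main obstacle I anticipate is the bookkeeping for $A''$: one has to correctly identify the standard form of $A''$ (making sure no column of $a''$ is all zero, which could fail if $a_{i'j} = 0$ for all $i' \neq i$ and $a_{ij} = 1$, in which case the $j$-th column of $a''$ becomes zero and that column disappears under $\sim$ — but then $f$ gets absorbed into the points, and the identity $L(T) = L(f) \oplus \bb Z(v - w)$ with $f$ a point degenerates to $L(T) = \bb Z(v - w)$, which still works out), and to verify that the new point tuple is genuinely in Cayley position and that the simplices remain independent. I would organize this by first reducing to standard form and then checking the lattice identity summand-by-summand. The degenerate/edge cases where $\dim T = 1$ (so $f$ is a point) or where the $j$-th column vanishes should be noted but handled uniformly by the $\oplus$-decomposition argument above, since all the identities are stated in terms of direct sums of lattices that remain valid when some summands are trivial.
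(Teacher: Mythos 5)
Your proposal is correct and takes essentially the same route as the paper's proof: for $A'$ you verify $S_0(A') = S_0(A)$ with the simplex tuple unchanged, and for $A''$ you compute $S_0(A'')$ and regroup the direct sum via $L(T) = \bb Z\langle w - v\rangle \oplus L(f)$ (where $w$ is the first vertex of $f$), which is precisely the paper's observation that $L(S_0(A'')) = L(S_0(A)) \oplus \bb Z\langle w-v\rangle$. The degenerate cases you flag (column of $a'$ or $a''$ vanishing, $f$ a point) are indeed harmless, exactly as you say, since the statement concerns only maximal elements and the direct-sum identities survive trivial summands.
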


\begin{proof}
We have either $B = A'$ or $B = A''$. In the former case, we have $S_0(A) = S_0(B)$ and $A$ and $B$ have the same second entry, so $L(A) = L(B)$ as desired.

Assume $B = A''$. Without loss of generality, assume $i = j = 1$. For convenience, we will reuse the variables $i$ and $j$ in the below proof. Let $w$ be the second vertex of $T$. We have
\begin{align*}
S_0(B) &= \conv \left( \left\{ p_i + a_{i1} w + \sum_{j=2}^{\abs{S}} a_{ij} v_j \right\}_{i \in [\abs{p}]} \cup \right. \\
&\qquad\qquad \left. \left\{ p_{1} + v + (a_{11}-1)w + \sum_{j=2}^{\abs{S}} a_{1j}v_j \right\}
\right) \\
&= \conv \left( \left\{ p_i + a_{i1} (w-v) + \sum_{j=1}^{\abs{S}} a_{ij} v_j \right\}_{i \in [\abs{p}]} \cup \right. \\
&\qquad\qquad \left. \left\{ p_{1} + (a_{11}-1)(w-v) + \sum_{j=1}^{\abs{S}} a_{1j}v_j \right\}
\right).
\end{align*}
Subtracting the last entry in the above convex hull from the first entry, we get $w-v$. Hence $w-v \in L(S_0(B))$. It is then not hard to see from the above expression that $L(S_0(B)) = L(S_0(A)) \oplus \bb Z\langle w-v \rangle$. Thus,
\begin{align*}
L(B) &= L(S_0(A)) \oplus \bb Z\langle w-v \rangle \oplus L(f) \oplus L(S_2) \oplus \dots \oplus L(S_{\abs{S}}) \\
&= L(S_0(A)) \oplus L(T) \oplus L(S_2) \oplus \dots \oplus L(S_{\abs{S}}) \\
&= L(A)
\end{align*}
as desired.
\end{proof}

\begin{prop} \label{prop:facegammaT}
Let $A \in \mc C_T$ and suppose $\mc B \in \mc C$ such that $B \le_{\mc C} A$. Then either $\gamma_T(A|B) = \gamma_{T'}(B)$ for some $T'$ or $\gamma_T(A|B) = \triv_{\mc C}(B)$.
\end{prop}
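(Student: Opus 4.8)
The plan is to work directly from the definitions of the standard form, the face relation $\le_{\mc C}$, and the construction of $\gamma_T(A)$. Fix $A \in \mc C_T$ with standard form $(p,S,a)$, let $j$ be the index with $S_j = T$, let $i$ be the smallest index attaining $\max_{i'} a_{i'j}$, and let $v$ be the first vertex of $T$, $f$ the opposite facet. Recall $\gamma_T(A) = \triv_{\mc C}(A') \cup \triv_{\mc C}(A'')$ with $A',A''$ as in \eqref{eq:A'A''}. A face $B \le_{\mc C} A$ is represented by $(p_I, S', a_{I\times\bullet})$ for some nonempty $I \subseteq [m]$ and $S' \le S$; equivalently $B = \Cay(B)$ corresponds to choosing a linear functional $\phi$ selecting faces $S'_k \le S_k$ and a subset $I$ of the ``surviving'' summand indices. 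By definition, $\gamma_T(A|B)$ is the unique subcomplex $Y \le \gamma_T(A)$ with $\abs{Y} = \Cay(B)$, so I need to identify which faces of $A'$ and $A''$ lie over $\Cay(B)$ and recognize the resulting complex.

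**First I would** split into cases according to how $B$ interacts with the data that $\gamma_T$ depends on, namely the index set $I$ (does it contain $i$?) and the face $S'_j \le T = S_j$ (is it all of $T$, a proper face containing $v$, or a face not containing $v$?). The subdivision $\gamma_T(A)$ restricted to $\Cay(B)$ is the subdivision of $\Cay(B)$ obtained by intersecting with the two pieces $v + (c-1)T$ and $\Cay((c-1)f, cf)$ in the $j$-th simplex direction (with $c = a_{ij}$), but only when that direction still ``matters'' for $B$. Concretely: if $S'_j = T$ and $i \in I$ and $a_{ij}\ge 1$, then the restriction genuinely cuts $B$ in the same way, and one checks $\gamma_T(A|B) = \gamma_{T}(B)$ provided $T$ still appears in the standard form of $B$ — but if the $j$-th column of $a_{I\times\bullet}$ is no longer the correct ``max'' column, or a different first-vertex situation arises, the restriction is instead $\gamma_{T'}(B)$ for the simplex $T'$ that plays the role of $T$ in $B$'s standard form (which is either $T$ itself or possibly $f$ or a face thereof in degenerate situations). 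In all remaining cases — $S'_j$ a proper face of $T$, or $i \notin I$ so the cut happens along a direction that has collapsed, or $a_{ij}$ drops to $0$ after restriction — the two pieces $A'|B$ and $A''|B$ collapse so that $\gamma_T(A)$ restricts trivially, giving $\gamma_T(A|B) = \triv_{\mc C}(B)$.

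**The main obstacle** will be bookkeeping: carefully tracking what the standard form of $B$ is (columns of $a_{I\times\bullet}$ may become zero and get deleted under $\sim$, the simplices $S'_k$ may become points and get absorbed into the $p$-part), and then verifying that the complex $\gamma_T(A)$ restricts to exactly $\gamma_{T'}(B)$ with the \emph{correct} choice of pivot index $i$ and pivot column $j$ for $B$. The subtlety is that the pivot $i$ for $A$ (smallest index attaining the column max) need not restrict to the pivot for $B$ when $i \notin I$; but in exactly those cases one checks the restriction is trivial, so no contradiction arises. I would organize the proof as: (1) describe a general face $B$ via $(I, S')$ and compute its standard form; (2) observe $\gamma_T(A|B)$ is determined by intersecting the two maximal pieces $A', A''$ with $\Cay(B)$; (3) case analysis on whether $v \in S'_j$, whether $i \in I$, and whether the $j$-th column survives in $B$; (4) in the ``live'' case match with $\gamma_{T'}(B)$ for the appropriate $T'$ (reading off its pivot from $B$'s standard form directly), and in every other case observe one of the two pieces contains $\Cay(B)$ as a face so the restriction is $\triv_{\mc C}(B)$.

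**Finally I would** note that this proposition, together with Proposition~\ref{prop:LgammaT}, is exactly what is needed to feed the family $\{\gamma_T\}$ into Theorem~\ref{thm:confluence}: the first bullet of facial compatibility follows since $\gamma_{T'}(B)$ and $\triv_{\mc C}(B)$ are both reachable by moves from $\{B\}$ (the former by a single $\gamma_{T'}$-move, the latter being $\{B\}$ itself up to taking $\langle\cdot\rangle$), hence $\Max\gamma_T(A|B)$ and $\{B\}$ are joinable; and local confluence and termination will be handled separately. So the emphasis in writing this proof should be on making the case analysis exhaustive and on pinning down $T'$ unambiguously in the live case.
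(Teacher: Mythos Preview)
Your overall approach---case analysis on $(I,S'_j)$ against the data defining $\gamma_T$---is exactly the paper's, and the paper's proof is just the terse statement of the outcome of that analysis. However, your delineation of the cases is not right.

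The correct split (which is what the paper states) is: the restriction is live precisely when $i \in I$ \emph{and} $v \in S'_j$ \emph{and} $\dim S'_j \ge 1$; in that case $\gamma_T(A|B) = \gamma_{S'_j}(B)$, with $T' = S'_j$. In every other case the restriction is trivial. Your writeup instead puts ``$S'_j$ a proper face of $T$'' entirely into the trivial bucket, which is wrong: if $S'_j$ is a proper face of $T$ that contains $v$ and has positive dimension, the cut of $a_{ij}T$ into $v + (a_{ij}-1)T$ and $\Cay((a_{ij}-1)f,a_{ij}f)$ restricts to the analogous cut of $a_{ij}S'_j$, which is exactly $\gamma_{S'_j}$ applied to $B$. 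Conversely, $T'$ is never $f$ or a face of $f$: if $v \notin S'_j$ the face $\Cay(B)$ sits entirely inside $\Cay(A'')$ and the restriction is trivial.

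Two of your worries are also unnecessary. First, the pivot index does not move: if $i \in I$, then since $a_{ij}$ is the global maximum of column $j$ and $i$ is the globally smallest index attaining it, the same $i$ is the smallest index in $I$ attaining $\max_{i'\in I} a_{i'j}$. Second, there is no ``correct max column'' issue: $\gamma_{T'}$ is defined for whatever column contains $T'$, independent of other columns. So once you fix the case boundary to ``$i\in I$ and $v\in S'_j$ and $\dim S'_j\ge 1$'', the identification $T' = S'_j$ is immediate and the rest of your plan goes through.
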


\begin{proof}
Let the standard form of $A$ be $(p,S,a)$. Let $B = (p_I,S',a_{I \times \bullet})$ with $S' \le S$ and $I$ a nonempty subset of $[\abs{p}]$. Assume that $T = S_j$, and let $i$ be the smallest number such that $a_{ij} = \max_{i'} a_{i'j}$. If $i \in I$ and $S_j'$ contains the first vertex of $T$ and another vertex, then $\gamma_T(A|B) = \gamma_{S_j'}(B)$. Otherwise, we have $\gamma_T(A|B) = \triv_{\mc C}(B)$.
\end{proof}

We now prove the following.

\begin{prop} \label{prop:confgammaT}
The family of subdivisions $\{ \gamma_T \}$, where $T$ ranges over all ordered integral simplices of dimension at least 1, is locally confluent, terminating, and facially compatible.
\end{prop}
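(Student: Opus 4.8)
The plan is to verify the three conditions of Theorem~\ref{thm:confluence} — termination, facial compatibility, and local confluence — in increasing order of difficulty. For \emph{termination}, I would attach to each $A \in \mc C$ with standard form $(p,S,a)$ a nonnegative integer monovariant, for instance $\mu(A) := \sum_{i,j} a_{ij}$ (or, if a single such quantity does not strictly decrease under every $\gamma_T$-move, the multiset of column sums $\{\sum_i a_{ij}\}_j$ ordered lexicographically, or the pair $(\dim \Cay(A), \mu(A))$). The point is that in passing from $A$ to $A'$ one subtracts $1$ from an entry of $a$, and in passing from $A$ to $A''$ one entry $S_j$ is replaced by its facet $f$, which has strictly smaller dimension, so some natural complexity measure strictly drops; since these measures take values in a well-ordered set, no infinite chain of nontrivial moves exists. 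I would phrase this carefully so that it applies simultaneously across all $T$, since the family $\{\gamma_T\}$ is what must terminate.

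For \emph{facial compatibility}, the first bullet is essentially handed to us by Proposition~\ref{prop:facegammaT}: given $y \le x$ with $x \in \mc C_T$, either $\gamma_T(x|y) = \triv_{\mc C}(y)$, in which case $\Max \gamma_T(x|y) = \{y\}$ and joinability with $\{y\}$ is immediate, or $\gamma_T(x|y) = \gamma_{T'}(y)$ for some ordered integral simplex $T'$ of dimension $\ge 1$, in which case $\{y\} \xrightarrow{y,\gamma_{T'}} \Max \gamma_{T'}(y) = \Max \gamma_T(x|y)$ is a single move exhibiting joinability. For the second bullet I must characterize terminal elements: $A$ is terminal iff $A \notin \mc C_T$ for every admissible $T$, i.e. iff the standard form $(p,S,a)$ has no entry $S_j$ that is a simplex of dimension $\ge 1$ — equivalently (after reducing to standard form and using the equivalence relation $\sim$) iff $S$ is empty, so that $\Cay(A)$ is just a simplex spanned by the $p_i$. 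This property is clearly inherited by faces $B \le_{\mc C} A$, since a face has standard form obtained by restricting $p$ to $I$ and passing to $S' \le S$, which cannot introduce positive-dimensional simplex summands that were not already present.

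The main obstacle is \emph{local confluence}: I must show that for $A \in \mc C_{T} \cap \mc C_{T'}$ with $T \ne T'$ both entries of the standard form of $A$, the results of a $\gamma_T$-move and a $\gamma_{T'}$-move on $\{A\}$ are joinable. The key observation is that $\gamma_T$ and $\gamma_{T'}$ act on disjoint coordinates: $\gamma_T$ modifies the $p$-entry and $a$-row indexed by the pivot for column $j$ (where $S_j = T$) and, in the $A''$ branch, replaces $S_j$ by its facet, while $\gamma_{T'}$ does the analogous thing for the column $j'$ with $S_{j'} = T'$, and $j \ne j'$. So the two moves "commute": applying $\gamma_{T'}$-moves to each of the (at most two) maximal elements produced by $\gamma_T$, and applying $\gamma_T$-moves to each of the maximal elements produced by $\gamma_{T'}$, should land on the same common set $Z$ — concretely the set of maximal elements of the four-fold (or fewer, if a branch is trivial) simultaneous refinement in both coordinates. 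The care needed here is threefold: (i) one must check that after a $\gamma_T$-move the relevant simplex $T'$ still appears in standard form with the right pivot index, so that $\gamma_{T'}$ is still applicable and its pivot is unaffected — this is where I would use that the column operations are on independent columns and that reduction to standard form commutes with these operations; (ii) one must handle the degenerate sub-cases where $a_{ij}=1$ with the rest of column $j$ zero (so the $\gamma_T$-move has a single maximal element $A''$) and the analogous degeneration for $T'$; and (iii) one must confirm that the pivot row choice ("smallest $i$ achieving the max in the column") is not disturbed by the other move, which again follows because the two moves touch different columns. Once these bookkeeping points are settled, joinability of $\Max \gamma_T(A)$ and $\Max \gamma_{T'}(A)$ follows by exhibiting the explicit common refinement, and the proposition is proved.
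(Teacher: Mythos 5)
Your treatment of facial compatibility matches the paper's (it is exactly Proposition~\ref{prop:facegammaT} plus the observation that an element is terminal iff $\abs{S}=0$ in standard form, a property inherited by faces). Your termination idea is close in spirit but none of the specific monovariants you list actually works: $\sum_{i,j}a_{ij}$, the column sums, and $\dim\Cay(A)$ can all fail to decrease (indeed can increase) under the $A''$ branch, since $a''$ is obtained from $a$ by \emph{inserting} an extra row and $\Cay(A'')$ is a full-dimensional piece of $\Cay(A)$. The measure that works, and that the paper uses, is the lexicographic pair $\bigl(\sum_j \dim S_j,\ \sum_{i,j}a_{ij}\bigr)$: the first coordinate strictly drops in the $A''$ branch (where $S_j$ is replaced by its facet), and in the $A'$ branch the first is unchanged while the second drops. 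You gesture at this, but as written your candidates would not survive scrutiny; this part is fixable.

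The genuine gap is in local confluence. Your argument rests on the claim that $\gamma_{T_1}$ and $\gamma_{T_2}$ act on disjoint coordinates, and in particular (your point (iii)) that the pivot row for one column is undisturbed by the move in the other column, so the two moves commute and the naive fourfold refinement is the common set. This fails precisely when the two columns share the same pivot row $i_1=i_2$, which is not a degenerate case (it is forced, for instance, whenever $\abs{p}=1$): both moves then translate the same point $p_i$, and the $A''$ branch inserts a new row directly above row $i$ whose entry in the other column equals $a_{ij'}$, so after a $\gamma_{T_1}$-move the pivot for column $j'$ in $A_1''$ is the newly inserted row. A direct computation with $\abs{p}=1$ and $S=(T_1,T_2)$ shows that applying one $\gamma_{T_2}$-move to each element of $\Max\gamma_{T_1}(A)$ and one $\gamma_{T_1}$-move to each element of $\Max\gamma_{T_2}(A)$ yields two \emph{different} sets (they share $(A_1')'=(A_2')'$ but differ in the remaining elements), so your commuting square does not close. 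The paper treats this case with a longer, asymmetric join: after that first round one applies an additional $\gamma_{T_2}$-move at $(A_1'')'$ on one side (respectively an additional $\gamma_{T_1}$-move at $(A_2'')'$ on the other), and only then do the two sides coincide; the common terminal set of this join has five maximal elements, not four. Without handling the shared-pivot case separately your local confluence argument is incomplete, and since local confluence is exactly what feeds into Theorem~\ref{thm:confluence}, the proposition does not follow from the proposal as written.
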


\begin{proof}
We first prove local confluence. Let $A \in \mc C$ with standard form $(p,S,a)$, and let $T_1$, $T_2$ be distinct entries of $S$. We need to show that $\Max \gamma_{T_1}(A)$ and $\Max \gamma_{T_2}(A)$ are joinable. Without loss of generality, assume $T_1 = S_1$ and $T_2 = S_2$.

Let $i_1$, $i_2$ be the smallest numbers such that $a_{i_1 1} = \max_i a_{i1}$ and $a_{i_22} = \max_i a_{i2}$, respectively. First suppose that $i_1 \neq i_2$. Then applying a $\gamma_{T_1}$-move on $\Max \gamma_{T_2}(A)$ at each of its elements yields the same result as applying a $\gamma_{T_2}$-move on $\Max \gamma_{T_1}(A)$ at each of its elements. Hence $\Max \gamma_{T_1}(A)$ and $\Max \gamma_{T_2}(A)$ are joinable, as desired.

Now assume $i_1 = i_2$. Let $A_1'$, $A_1'' \in \gamma_{T_1}(A)$ be as in \eqref{eq:A'A''} and define $A_2'$, $A_2'' \in \gamma_{T_2}(A)$ analogously. Consider the following sequence of moves starting from $\gamma_{T_1}(A)$. First, if $A_1' \in \Max(\gamma_{T_1}(A))$, then apply a $\gamma_{T_2}$ move at $A_1'$. Next, apply a $\gamma_{T_2}$ move at $A_1''$. Since $A_1''$ has at least two entries greater than 0 in the second column, $\gamma_{T_2}(A_1'')$ has two maximal elements $(A_1'')'$, $(A_1'')''$, defined analogously to \eqref{eq:A'A''}. Finally, apply a $\gamma_{T_2}$ move at $(A_1'')'$.

If we do an analogous sequence of moves starting from $\gamma_{T_2}(A)$ (with $\gamma_{T_1}$ moves replacing $\gamma_{T_2}$ moves), then we obtain the same set as we did with the above sequence. Hence, $\gamma_{T_1}(A)$ and $\gamma_{T_2}(A)$ are joinable, as desired.

We next show that $\{ \gamma_T \}$ is terminating. Let $A \in \mc C_T$ with standard form $(p,S,a)$. Then if $B \in \mc C_T$ with standard from $(p',S',a')$, then either $\sum \dim S'_j < \sum \dim S_j$, or $S = S'$ and $\sum_{i,j} a'_{ij} < \sum_{i,j} a_{ij}$. Since the quantities $\sum \dim S_j$ and $\sum_{i,j} a_{ij}$ are both nonnegative integers, it follows that $\{ \gamma_T \}$ is terminating.

Finally, we show that $\{ \gamma_T \}$ is facially compatible. Let $A \in \mc C_T$ and suppose $\mc B \in \mc C$ such that $B \le_{\mc C} A$. Proposition~\ref{prop:facegammaT} implies $\Max \gamma_T(A|B)$ and $\{B\}$ are joinable, as desired.

Next, suppose $A \in \mc C$. From the definition of $\gamma_T$, $A$ is terminal with respect to $\{ \gamma_T \}$ if and only if $A \notin \mc C_T$ for any $T$. Thus, if $(p,S,a)$ is the standard form of $A$, then $A$ is terminal if and only if $\abs{S} = 0$. Clearly if this holds for $A$, it holds for any $B \le A$. Hence $\{ \gamma_T \}$ is facially compatible, completing the proof.
\end{proof}

From the previous two propositions we immediately have the following from Theorem~\ref{thm:confluence}.

\begin{thm} \label{thm:Gamma}
There is a canonical subdivision $\Gamma : \mc C \to \Subd(\mc C)$ over $\mc P$ such that for any $A \in \mc C$ and any $B \in \Max \Gamma(A)$, where $(p,S,a)$ is the standard form of $B$, we have $\abs{S} = 0$ and $L(A) = L(B)$.
\end{thm}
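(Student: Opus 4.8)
The plan is to apply Theorem~\ref{thm:confluence} directly to the family $\{\gamma_T\}$ and then read off the two asserted properties of the maximal cells. Recall from the discussion of standard form that $\mc C$ is a perfect $\mc P$-poset, which is the $n=1$ case of a perfect $n\mc P$-poset (with $\Sum = \id$ and $n\mc P = \mc P$). By Proposition~\ref{prop:confgammaT}, the family $\{\gamma_T : \mc C_T \to \Subd(\mc C)\}$, indexed by ordered integral simplices $T$ of dimension at least $1$, is locally confluent, facially compatible, and terminating. Hence Theorem~\ref{thm:confluence} applies: for each $A \in \mc C$ there is a unique terminal set $S(A) \subset \mc C$ with $\{A\} \xrightarrow{\ast} S(A)$, the set $\Gamma(A) := \langle S(A)\rangle_{\mc C}$ is a $\mc C$-subdivision of $\Cay(A)$, and the map $\Gamma : \mc C \to \Subd(\mc C)$ given by $A \mapsto \Gamma(A)$ is a canonical subdivision over $\mc P$.

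It then remains to identify $\Max \Gamma(A)$ and verify the two claimed properties of its elements. Since $\Gamma(A) = \langle S(A)\rangle_{\mc C}$ consists of $S(A)$ together with elements lying strictly below it, every maximal element of $\Gamma(A)$ lies in $S(A)$, and every element of $S(A)$ is terminal with respect to $\{\gamma_T\}$. From the last paragraph of the proof of Proposition~\ref{prop:confgammaT}, an element $C \in \mc C$ is terminal if and only if $C \notin \mc C_T$ for every $T$, which, writing $(p,S,a)$ for the standard form of $C$, holds exactly when $\abs{S} = 0$. Therefore every $B \in \Max \Gamma(A)$ has standard form $(p,S,a)$ with $\abs{S} = 0$, which is the first property.

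For the second property, $L(A) = L(B)$, I would argue by induction along the rewriting, using the well-founded relation $\prec$ from the proof of Theorem~\ref{thm:confluence} (equivalently, invoking the terminating property). If $A$ is terminal, then $S(A) = \{A\}$ and there is nothing to prove. Otherwise choose any non-trivial move $\{A\} \xrightarrow{A,\gamma_T} Y$, so that $Y = \Max \gamma_T(A)$ and $S(A) = \bigcup_{y \in Y} S(y)$. Proposition~\ref{prop:LgammaT} gives $L(y) = L(A)$ for every $y \in Y$, and since each such $y$ satisfies $y \prec A$, the inductive hypothesis yields $L(C) = L(y) = L(A)$ for all $C \in S(y)$. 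Taking the union over $y \in Y$ shows $L(C) = L(A)$ for all $C \in S(A)$, and in particular for all $B \in \Max \Gamma(A)$.

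The argument has no real obstacle beyond bookkeeping: the substantive work has already been carried out in Propositions~\ref{prop:LgammaT}, \ref{prop:facegammaT}, \ref{prop:confgammaT} and Theorem~\ref{thm:confluence}. The one point that needs a little care is the propagation of the invariant $L(\,\cdot\,)$ through an entire reduction sequence rather than a single move; this is exactly what the induction on $\prec$ handles, with Proposition~\ref{prop:LgammaT} supplying the single-step case and the recursive identity $S(A) = \bigcup_{y\in Y} S(y)$ (valid because $\gamma_T$-moves only produce faces below the relevant Cayley polytope, so reductions may be performed independently on the pieces) making the induction go through.
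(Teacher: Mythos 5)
Your proposal is correct and follows essentially the same route as the paper, which deduces Theorem~\ref{thm:Gamma} directly from Proposition~\ref{prop:confgammaT} and Theorem~\ref{thm:confluence}, with Proposition~\ref{prop:LgammaT} giving the lattice invariance. The only difference is that you spell out the propagation of $L(\cdot)$ along the reduction via induction on $\prec$ and the identity $S(A)=\bigcup_{y\in Y}S(y)$, details the paper treats as immediate.
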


When restricted to elements $(p,S,a)$ of $\mc C$ where $\abs{p} = 1$ and $\abs{S} = 1$, $\Gamma$ is known as the ``canonical triangulation'' of a dilated ordered simplex \cite{HPPS}.

\section{Box points and index-lowering} \label{sec:boxpoints}

\subsection{Box points}

In this section we describe the standard way of reducing the indices of integral polytopes, first defined in \cite{KKMS} as \emph{Waterman points}. We use the terminology of \cite{HPPS} and call them \emph{box points}.

Let $S = (S_1,\dots,S_n)$ be a tuple of $n$ independent integral simplices in $\bb R^d$. Consider the lattices
\begin{align*}
L(S) &:= L(S_1) + \dots + L(S_n) \\
N(S) &:= N(S_1) + \dots + N(S_n).
\end{align*}
We define a \emph{box point} of $S$ to be any nonzero element of the quotient group $G(S) := N(S) / L(S)$. Let $v_1$, \dots, $v_n$ be any vertices of $S_1$, \dots, $S_n$, respectively.  For any box point $x$ of $S$, there is a unique tuple $(c_1,\dots,c_n)$ of nonnegative integers such that the polysimplex $\sum_{j=1}^n c_j(S_j-v_j)$ contains exactly one representative of $x$ in $N(S)$.\footnote{Explicitly, $(c_1,\dots,c_n)$ can be described as follows. For each $1 \le j \le n$, let $\{u_1,\dots,u_{k_j}\}$ be the vertices of $S_j$ other than $v_j$, and define $e_j^i := u_i - v_j$. Then the $e_j^i$ form a basis for $L(S)$. Every $x \in G(S)$ has a unique representative in $N(S)$ of the form $\sum_{i,j} x_j^i e_j^i$, where each $0 \le x_j^i < 1$. Then $c_j = \lceil \sum_i x_j^i \rceil$.}
The tuple $(c_1,\dots,c_n)$ does not depend on the choice of $v_1$, \dots, $v_n$. We denote the tuple $(c_1,\dots,c_n)$ by $c(S,x)$. We have $0 \le c_j \le d$ for all $j$. We define the \emph{support} of $c$ to be $\supp c := \{ j \in [n] : c_j \neq 0 \}$.

Let $S$ be as above, and let $S' \le S$. Then we have a natural inclusion $G(S') \into G(S)$. Thus, any box point of $S'$ can be regarded as a box point of $S$. Suppose $x$ is a box point of $S'$, and thus a box point of $S$. Then $c(S',x) = c(S,x)$.

Now, let $L$ be a $d$-dimensional sublattice of $\bb Z^d$. Let $x \in \bb Z^d / L$ be nonzero. For any tuple $S = (S_1,\dots,S_n)$ of independent integral simplices in $\bb Z^d$, we say that $x$ is a \emph{box point} of $S$ if there is $S' \le S$ such that $L(S') = L \cap N(S')$ and $N(S')$ contains a representative of $x$. In this situation, clearly $x$ can be naturally identified with a unique nonzero element of $G(S')$, and hence it can be identified with a unique nonzero element of $G(S)$.

Let $x$ be as above. If $A \in \mc C$ has standard form $(p,S,a)$, we say that $x$ is a box point of $A$ if $x$ is a box point of $S$.


\subsection{A proof of the KMW theorem} \label{sec:KMW}

We now use box points to give a short proof of the KMW theorem. The fundamental construction is the same as in the original proof \cite{KKMS}, but the argument structure is simplified due to the use of canonical subdivisions. This section is optional and is not needed in the proof of the main theorem of the paper.

Let $S$ be an ordered integral simplex. Let $x$ be a box point of $S$. Let $c$ be the single entry of $c(S,x)$. Let $x_0$ be the unique representative of $x + cv$ in $cS$, where $v$ is the first vertex of $S$. Let $\mf F = \mf F(S)$ be the set of all facets $F$ of $S$ such that $cF$ does not contain $x_0$. Consider the collection of simplices
\[
\{ \conv(x_0,cF) : F \in \mf F \}.
\]
These simplices are the maximal simplices of a triangulation of $cS$, called the \emph{stellar subdivision} of $cS$ at $x_0$.

We can make this stellar subdivision a $\mc C$-subdivision by considering all elements in $\mc C$ with standard form
\begin{equation} \label{eq:starsimplex}
\left( (x_0,0), (F), \begin{pmatrix} 0 \\ c \end{pmatrix} \right)
\end{equation}
where $F \in \mf F$. These elements map via $\Cay$ to the maximal elements of the above stellar subdivision. Thus, they are the maximal elements of a $\mc C$-subdivision whose image under $\Cay$ is the stellar subdivision.

Crucially, for any $F \in \mf F$, the lattice distance $h'$ between $cF$ and $x_0$ is strictly smaller than the lattice distance $h$ between $F$ and the opposite vertex of $S$. Thus, if $A \in \mc C$ is of the form \eqref{eq:starsimplex}, we have
\[
\ind L(A) = h' \ind L(F) < h \ind L(F) = \ind L(S)
\]

Now consider the polytope $NcS$, where $N$ is a positive integer. Consider the collection of polytopes
\begin{multline*}
\{ 
\{ \conv(x_0+(N-1)cF,NcF) :  F \in \mf F \} \\
\cup \{ \conv((N-1)x_0+cF,(N-2)x_0+2cF) :  F \in \mf F \} \cup \dotsb \\
\cup \{ \conv(Nx_0,(N-1)x_0+cF) : F \in \mf F \} \}
\end{multline*}
These are the maximal elements of a polytopal subdivision of $NcS$. We can make this a $\mc C$-subdivision by considering all elements in $\mc C$ with standard form
\begin{equation} \label{eq:concentricpolytope}
\left( \left( rx_0, (r-1)x_0 \right),(F), \begin{pmatrix} (N-r)c \\ (N-r+1)c \end{pmatrix} \right)
\end{equation}
where $F \in \mf F$ and $1 \le r \le N$ is an integer. These are the maximal elements of a $\mc C$-subdivision whose image under $\Cay$ is the previous polytopal subdivision. For any $A \in \mc C$ of the form \eqref{eq:concentricpolytope}, we have $\ind L(A) < \ind L(S)$.

We can now proceed with proving the KMW theorem. Let $L \subset \bb Z^d$ be a $d$-dimensional lattice, and fix some nonzero element $x \in \bb Z^d/L$. Let $\mc C_x := \mc C_x^\circ \cup \mc C_x^\bullet$, where
\begin{itemize}
\item $\mc C_x^\circ$ is the set of all elements in $\mc C$ which do not have $x$ as a box point.
\item $\mc C_x^\bullet$ is the set of all elements in $\mc C$ whose standard form $(p,S,a)$ is such that $x$ is a box point of $S$, $\abs{p} = 1$, $\abs{S} = 1$, and the entry of $a$ is $d!$. 
\end{itemize}
We let $\mc C_x$ inherit a poset structure and poset map $\Cay : \mc C_x \to \mc P$ from $\mc C$. We can check that $(\mc C_x, \Cay)$ is a perfect $\mc P$-poset.

Let $\gamma_T^\circ$ be the restriction of $\gamma_T$ to $\mc C_x^\circ \cap \mc C_T$. It is easy to see that the image of $\gamma_T^\circ$ is contained in $\Subd(\mc C_x^\circ)$. Thus we have a well-defined subdivision $\gamma_T^\circ : \mc C_x^\circ \cap \mc C_T \to \Subd(\mc C_x)$ over $\mc P$.

Now, define a subdivision $\stell_x : \mc C_x^\bullet \to \Subd(\mc C_x)$ as follows. Let $A \in \mc C_x^\bullet$ with standard form $(p,S,a)$. We will abuse notation and identify $p$ and $S$ with their single entries. Let $c := c(S,x)$ and $x_0$ be the unique representative of $x +cv$ in $S$, where $v$ is the first vertex of $S$. Using \eqref{eq:concentricpolytope}, we define $\stell_x(A)$ to be the $\mc C$-subdivision whose maximal elements are
\[
\left( \left( p+rx_0, p+(r-1)x_0 \right),(F), \begin{pmatrix} (N-r)c \\ (N-r+1)c \end{pmatrix} \right)
\]
where $N = d!/c$, $1 \le r \le N$, and $F \in \mf F(S)$. As in \eqref{eq:concentricpolytope}, this is a $\mc C$-subdivision of $\Cay(A)$, and since $x$ is not a box point of $F$ by definition of $\mf F$, this is a $\mc C_x$-subdivision, as claimed.

We leave the following as an exercise:

\begin{prop}
The family of subdivisions $\{\gamma_T^\circ\}_T \cup \{\stell_x\}$ is locally confluent, terminating, and facially compatible (as $\mc C_x$-subdivisions).
\end{prop}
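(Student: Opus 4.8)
The plan is to apply Theorem~\ref{thm:confluence} to the perfect $\mc P$-poset $(\mc C_x,\Cay)$ (here $n=1$), so what I must verify is that the family $\{\gamma_T^\circ\}_T\cup\{\stell_x\}$ is locally confluent, terminating, and facially compatible. Two structural observations drive everything. First, the domains $\mc C_x^\bullet$ and $\mc C_x^\circ$ are disjoint, so no element of $\mc C_x$ admits both a $\gamma_T^\circ$-move and a $\stell_x$-move; and $\mc C_x^\circ$ is downward closed, since for $B\le_{\mc C}A$ with standard forms $(p_I,S',a_{I\times\bullet})$ and $(p,S,a)$ one has $S'\le S$, hence $G(S')\into G(S)$, so every box point of $B$ is a box point of $A$, and contrapositively $A\in\mc C_x^\circ$ forces $B\in\mc C_x^\circ$. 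Second, $\gamma_T$-moves preserve $\mc C_x^\circ$: for $A\in\mc C_x^\circ\cap\mc C_T$, every element of $\gamma_T(A)=\triv_{\mc C}(A')\cup\triv_{\mc C}(A'')$ is a face of $A'$ or $A''$, hence has standard-form simplex tuple $\le S$, hence lies in $\mc C_x^\circ$ by the same injectivity of box-point groups. (This is also why $\gamma_T^\circ$ really maps into $\Subd(\mc C_x)$, as asserted in the text.)

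Local confluence now follows. Reducing as in the text to single elements, it suffices to join, for each $A$, the outputs of any two moves available at $A$; by disjointness of the domains the only nontrivial case is $A\in\mc C_x^\circ\cap\mc C_{T_1}\cap\mc C_{T_2}$ with $T_1\ne T_2$, and Proposition~\ref{prop:confgammaT} joins $\Max\gamma_{T_1}(A)$ and $\Max\gamma_{T_2}(A)$ by a sequence of $\gamma_T$-moves, all of which stay inside $\mc C_x^\circ$ by the previous paragraph and hence are legal $\gamma_T^\circ$-moves. For termination, attach to each $A\in\mc C_x$ the pair $\mu(A):=(\sum_j\dim S_j,\ \sum_{i,j}a_{ij})$, where $(p,S,a)$ is the standard form of $A$, ordered lexicographically in $\bb Z_{\ge0}^2$. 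A $\gamma_T^\circ$-move replaces $A$ by elements of $\Max\gamma_T(A)\subseteq\{A',A''\}$, and, exactly as in the proof of Proposition~\ref{prop:confgammaT}, $\mu(A')<\mu(A)$ (the second coordinate drops) and $\mu(A'')<\mu(A)$ (the first coordinate drops); a $\stell_x$-move replaces $A\in\mc C_x^\bullet$, whose standard form has a single positive-dimensional simplex $S$, by elements whose standard form has the single simplex equal to a facet of $S$, again dropping the first coordinate of $\mu$. Thus each nontrivial move replaces an element by finitely many elements of strictly smaller $\mu$-value, so the multiset of $\mu$-values strictly decreases in the well-founded order on finite multisets over $\bb Z_{\ge0}^2$; hence there is no infinite chain of nontrivial moves.

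It remains to check facial compatibility. Let $B\le_{\mc C}A$ with $A$ in the domain of one of the subdivisions. If $A\in\mc C_x^\circ\cap\mc C_T$, then $\gamma_T^\circ(A|B)=\gamma_T(A|B)$, which by Proposition~\ref{prop:facegammaT} is either $\gamma_{T'}(B)=\gamma_{T'}^\circ(B)$ (legal since $B\in\mc C_x^\circ$) or $\triv_{\mc C}(B)$; either way $\Max\gamma_T^\circ(A|B)$ and $\{B\}$ are joinable. If $A\in\mc C_x^\bullet$, then $\Cay(B)=p+d!S'$ for a face $S'$ of $S$; restricting the concentric (onion) subdivision $\stell_x(A)$ to this face, one finds that if the representative $x_0$ lies on the face $cS'$ of $cS$, the restriction is the onion of $p+d!S'$ at $x_0$, which equals $\stell_x(B)$ (using that then $x$ is a box point of $S'$ with $c(S',x)=c(S,x)$), and $\{B\}\to\Max\stell_x(B)$ is a legal move; otherwise $\Cay(B)$ is a face of the outermost shell cell over some $F\in\mf F(S)$ with $S'\subseteq F$, so the restriction is $\triv_{\mc C}(B)$ and $\Max\stell_x(A|B)=\{B\}$. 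In all cases $\Max\stell_x(A|B)$ and $\{B\}$ are joinable. Finally, an element $A$ is terminal exactly when no move is available, i.e.\ $A\in\mc C_x^\circ$ and the standard form of $A$ has no positive-dimensional simplex; since $\mc C_x^\circ$ is downward closed and passing to a face can only shrink the simplex tuple, every $B\le_{\mc C}A$ is then terminal as well. Theorem~\ref{thm:confluence} then applies.

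The one genuinely delicate point is the $\stell_x$ clause of facial compatibility: one must confirm that restricting the onion subdivision of $p+d!S$ to an arbitrary face is compatible with the box-point data. This amounts to the lattice bookkeeping that $x_0$ lying on the face $cS'$ is equivalent to $x$ being a box point of $S'$, after which $c(S',x)=c(S,x)$ by the functoriality of the $c$-invariant already recorded, so the representative $x_0$ sits in $cS'$ and the onion of $p+d!S'$ built by $\stell_x(B)$ coincides with the restriction. Everything else reduces directly to the already-established properties of the $\gamma_T$ family and to elementary facts about dilations of simplices.
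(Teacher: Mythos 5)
Your proof is correct, and it follows exactly the route the paper intends for this exercise (the paper itself gives no proof, only the analogous arguments in Propositions~\ref{prop:confgammaT}, \ref{prop:facegammaT} and later in the proof of Proposition~\ref{prop:S}): disjointness of $\mc C_x^\circ$ and $\mc C_x^\bullet$ plus closure of $\mc C_x^\circ$ under faces and $\gamma_T$-moves gives local confluence via Proposition~\ref{prop:confgammaT}, the lexicographic measure $(\sum_j\dim S_j,\sum_{i,j}a_{ij})$ gives termination, and the restriction analysis (Proposition~\ref{prop:facegammaT} together with the fact that the focus $x_0$ lies in $cS'$ exactly when $x$ is a box point of $S'$, so the onion subdivision restricts to $\stell_x(B)$ or trivializes) gives facial compatibility. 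The one delicate point you flag, the $\stell_x$ clause, is handled correctly and matches the paper's treatment of the analogous $\tau_x$ case in Proposition~\ref{prop:facetaux}.
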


From this and the previous discussions we have the following.

\begin{thm} \label{thm:Gammax}
There is a canonical subdivision $\Gamma_x : \mc C_x \to \Subd(\mc C_x)$ over $\mc P$ such that for all $A \in \mc C_x$, we have the following:
\begin{enumerate}[label=(\alph*)]
\item If $A \in \mc C_x^\circ$, then $\Gamma_x(A) = \Gamma(A)$.
\item If $A \in \mc C_x^\bullet$ and $B \in \Max(\Gamma_x(A))$, where $(p,S,a)$ is the standard form of $B$, then $\abs{S} = 0$ and $\ind(L(B)) < \ind(L(A))$.
\end{enumerate}
\end{thm}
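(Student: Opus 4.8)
The plan is to apply Theorem~\ref{thm:confluence} to the perfect $\mc P$-poset $(\mc C_x, \Cay)$ equipped with the family $\{\gamma_T^\circ\}_T \cup \{\stell_x\}$, using the (stated) proposition that this family is locally confluent, terminating, and facially compatible. By Theorem~\ref{thm:confluence} this immediately yields a canonical subdivision $\Gamma_x : \mc C_x \to \Subd(\mc C_x)$ over $\mc P$ whose maximal elements $B$ are terminal with respect to this family. The remaining work is to identify what "terminal" means here and to verify properties (a) and (b).

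First I would verify (a). An element $A \in \mc C_x^\circ$ is never in the domain $\mc C_x^\bullet$ of $\stell_x$, and $\gamma_T^\circ$ is by definition simply the restriction of $\gamma_T$ to $\mc C_x^\circ \cap \mc C_T$. So every move from a subset of $\mc C_x^\circ$ is a $\gamma_T$-move in the original sense, and since the image of $\gamma_T^\circ$ lands in $\Subd(\mc C_x^\circ)$, the entire reduction sequence $\{A\} \xrightarrow{\ast} S(A)$ computed inside $\mc C_x$ coincides with the one computed inside $\mc C$ using $\{\gamma_T\}$. Hence $S(A)$ is the same terminal set that defines $\Gamma(A)$ in Theorem~\ref{thm:Gamma}, so $\Gamma_x(A) = \langle S(A)\rangle_{\mc C} = \Gamma(A)$. (One should note here that $\langle\cdot\rangle$ computed in $\mc C_x$ agrees with that in $\mc C$, which follows since $\mc C_x$ inherits its poset structure and faces from $\mc C$ and $\mc C_x^\circ$ is downward closed — any face of an element of $\mc C_x^\circ$ still does not have $x$ as a box point, using the compatibility of box points with passing to faces discussed in Section~\ref{sec:boxpoints}.)

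Next I would verify (b). Let $A \in \mc C_x^\bullet$ and let $B \in \Max(\Gamma_x(A))$, so $B$ is terminal. I claim $B$ is terminal iff its standard form $(p,S,a)$ has $\abs S = 0$ and $B$ does not have $x$ as a box point — equivalently $B \in \mc C_x^\circ$ with $\abs S = 0$. Indeed, $B$ is terminal iff for every applicable subdivision in the family, $B$ is a fixed point; $B \notin \mc C_x^\bullet$ is forced (any element of $\mc C_x^\bullet$ admits a non-trivial $\stell_x$-move, since $c \le d$ so $N = d!/c \ge 1$ and the $\stell_x$-subdivision has strictly more than one maximal element whenever $\abs{\mf F(S)} \ge 2$, which holds as $\dim S \ge 1$), and if $B \in \mc C_x^\circ$ then terminality with respect to $\{\gamma_T^\circ\}$ means $\abs S = 0$ exactly as in the proof of Proposition~\ref{prop:confgammaT}. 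So $\abs S = 0$. For the index inequality: the first move in the reduction of $\{A\}$ is a $\stell_x$-move, producing maximal elements of the form displayed in the definition of $\stell_x$, each of which has the form \eqref{eq:concentricpolytope} (shifted by $p$), and for each such element $A_0$ one has $\ind L(A_0) < \ind L(S) = \ind L(A)$ by the lattice-distance computation in Section~\ref{sec:KMW} (noting $\ind L(A) = d!\cdot\ind L(S_{\text{pt}})$ — actually $L(A)$ for $A \in \mc C_x^\bullet$ equals $L(d! S)$, whose index is computed from that of $S$). Every $B \le_{\mc C} A_0$ then satisfies $\ind L(B) \mid \ind L(A_0)$ — more precisely, from Proposition~\ref{prop:LgammaT} all subsequent $\gamma_T$-moves preserve $L$, and passing to a face $S_0(A_0) \to$ smaller can only divide the index — so $\ind L(B) \le \ind L(A_0) < \ind L(A)$.

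The main obstacle I expect is the bookkeeping in the last step: carefully relating $L(A)$ for $A \in \mc C_x^\bullet$ (a $d!$-dilated ordered simplex in Cayley form with $\abs p = 1$) to the lattice-distance decrease established for the building block \eqref{eq:concentricpolytope}, and confirming that every face $B \le_{\mc C} A_0$ appearing after the $\gamma_T$-reduction has index dividing that of $A_0$, so that the strict drop at the first step is never undone. This is essentially the content already worked out in Section~\ref{sec:KMW} together with Proposition~\ref{prop:LgammaT}, so it is routine but needs to be assembled correctly; the confluence/termination/facial-compatibility input is taken as given.
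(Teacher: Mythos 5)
Your proposal is correct and takes essentially the same route as the paper: it feeds the stated confluence/termination/facial-compatibility proposition into Theorem~\ref{thm:confluence} for $(\mc C_x,\Cay)$, identifies the terminal elements as those with $\abs{S}=0$, and obtains (a) by noting the reduction of an element of $\mc C_x^\circ$ only ever uses $\gamma_T^\circ=\gamma_T$ and (b) from the strict index drop at the $\stell_x$-step combined with Proposition~\ref{prop:LgammaT}, which is exactly the paper's ``from this and the previous discussions'' argument. Two small slips that do not affect the argument: by the paper's definition $L(A)=L(S)$ (not $L(d!\,S)$) for $A\in\mc C_x^\bullet$, and the elements of $\Max(\Gamma_x(A))$ are terminal descendants of the $\stell_x$-maximal elements rather than faces $B\le_{\mc C}A_0$ — but your ``more precisely'' clause (subsequent $\gamma_T$-moves preserve $L$) already supplies the correct mechanism.
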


We can now give a proof of Theorem~\ref{thm:KMW}.

\begin{proof}[Proof of Theorem~\ref{thm:KMW}]
For any triangulation $X$, we will consider the collection of lattices
\[
\ms L(X) := \{ L(S) : S \in \Max X \}.
\]

Start with a $d$-dimensional triangulation $X$ in $\bb R^d$. Dilate the triangulation by $d!$ to form $d!X$. We can view $d!X$ as a $\mc C$-subdivision by replacing each simplex $d!S \in d!X$ by the element $((0), (S), (d!)) \in \mc C$. Call this $\mc C$-subdivision $Y$.

Now suppose there is a $d$-dimensional simplex $S \in X$ such that $\ind L(S) > 1$. Choose any nonzero $x \in \bb Z^d / L(S)$. Note that $Y \in \Subd(\mc C_x)$. Let $X' = \Cay \Gamma_x^\ast(Y)$, where $\Gamma_x$ is from Theorem~\ref{thm:Gammax} and the $\ast$ construction is from Proposition~\ref{prop:canonicalrefinement}. For every $T' \in X'$, we have that $T'$ is a simplex, and either
\begin{itemize}
\item $\ind L(T') = \ind L(T)$ for some $T \in X$ where $x$ is not a box point of $T$, or
\item $\ind L(T') < \ind L(T)$ for some $T \in X$ where $x$ is a box point of $T$.
\end{itemize}

Hence, comparing $\ms L(X)$ and $\ms L(X')$, we have that $\ms L(X')$ is obtained by replacing at least one lattice of $\ms L(X)$ with lattices of lower index, while keeping the other lattices the same. Thus, if we repeat the above process on $X'$, and so on, we will eventually obtain a triangulation $Z$ of $(d!)^N X$, for some $N$, such that $\ms L(Z)$ contains only the lattice $\bb Z^d$. Letting $X$ be any triangulation of an integral polytope $P$, we obtain the theorem.
\end{proof}

\subsection{The structure of box points} \label{sec:structboxpoints}

In this section we investigate the distribution of representatives of box points in dilated polysimplices.

Let $S = (S_1,\dots,S_n)$ be a tuple of ordered independent integral simplices and let $x$ be a box point of $S$. Let $c = c(S,x)$. By definition, the polysimplex $\sum_{j=1}^n c_j S_j$ contains a unique representative of $x + \sum_{j=1}^n c_j v_j$, where $v_j$ is the first vertex of $S_j$. Let $x_0$ be this representative. We call $x_0$ the \emph{focus} of $(S,x)$.

Let $\mf F = \mf F(S,x)$ be the set of all tuples $F = (F_1,\dots,F_n)$ such that $F \le S$, $\sum F_j$ is a facet of $\sum S_j$, and $x$ is not a box point of $F$. Thus, if $F \in \mf F$, the point $x_0$ is not contained in the facet $\sum_{j=1}^n c_j F_j$ of $\sum_{j=1}^n c_j S_j$. Moreover, the lattice distance from $x_0$ to $\sum_{j=1}^n c_j F_j$ is smaller than the lattice height of $\sum_{j=1}^n S_j$ with respect to the facet $\sum_{j=1}^n F_j$.

As in Section~\ref{sec:KMW}, the elements of the form
\[
\left((x_0,0),F, \begin{pmatrix} 0 & \cdots & 0 \\ c_1 & \cdots & c_n \end{pmatrix} \right)
\]
where $F \in \mf F$ are the maximal elements of a $\mc C$-subdivision of $\sum_{j=1}^n c_j S_j$. If $B$ is such an element, then $\ind L(B) < \ind (L(S_1) + \dots + L(S_n))$.

Now, consider the polysimplex $P = \sum_{j=1}^n a_j S_j$, where the $a_j$ are integers satisfying $a_j \ge c_j$ for all $j$. Since
\[
P = \sum_{j=1}^n c_j S_j + \sum_{j=1}^n (a_j - c_j) S_j\]
it follows that $P$ contains the polytope $P' := x_0 + \sum_{j=1}^n (a_j - c_j) S_j$. Moreover, if $F \in \mf F$, then the lattice distance between the facet $x_0 + \sum_{j=1}^n (a_j - c_j) F_j$ of $P'$ and the facet $\sum_{j=1}^n a_j F_j$ of $P$ is equal to lattice distance from $x_0$ to $\sum_{j=1}^n c_j F_j$.

Consider the elements of $\mc C$ of the form
\[
\left( (x_0,0),F, \begin{pmatrix} a_1-c_1 & \cdots & a_n-c_n \\ a_1 & \cdots & a_n \end{pmatrix} \right).
\]
where $F$ ranges over $\mf F$. These elements are the maximal elements of a $\mc C$-complex whose support is $\overline{P \setminus P'}$, the closure of $P \setminus P'$.
Note that if there is some $j \in \supp c$ such that $a_j = c_j$, then $P'$ has smaller dimension than $P$, and so in this case this complex is a $\mc C$-subdivision of $P$.
If $B \in \mc C$ is an element of the above form, then $\ind L(B) < \ind (L(S_1) + \dots + L(S_n))$.

If $a_j \ge 2c_j$ for all $j$, then repeating the above argument on $P'$, we have that the elements of $\mc C$ of the form
\[
\left( (2x_0,x_0),F, \begin{pmatrix} a_1-2c_1 & \cdots & a_n-2c_n \\ a_1-c_1 & \cdots & a_n-c_n \end{pmatrix} \right).
\]
are the maximal elements of a $\mc C$-complex whose support if $\overline{P' \setminus P''}$, where $P'' := 2x_0 + \sum_{j=1}^n (a_j-2c_j) F_j$. In general, if $a_j \ge Nc_j$ for some positive integer $N$, then the elements of $\mc C$ of the form
\[
\left( \left(rx_0,(r-1)x_0 \right),F,\begin{pmatrix} a_1-rc_1 & \cdots & a_n-rc_n \\ a_1-(r-1)c_1 & \cdots & a_n-(r-1)c_n \end{pmatrix} \right),
\]
where $r = 1$, \dots, $N$ and $F \in \mf F$, are the maximal elements of a $\mc C$-complex whose support is $\overline{P \setminus P^N}$, where $P^N := Nx_0 + \sum_{j=1}^n (a_j - Nc_j) F_j$. If in addition $a_j = Nc_j$ for some $j \in \supp c$, then this complex is a $\mc C$-subdivision of $P$. As before, if $B \in \mc C$ is a maximal element of this complex, then $\ind L(B) < \ind (L(S_1) + \dots + L(S_n))$.

\subsection{$\kappa_{T,x}$-subdivisions} \label{sec:kappaTx}

We will need one final geometric construction for the main proof. Like the previous constructions in this section, this construction uses box points to lower the indices of lattices, but in a way more analogous to $\gamma_T$. This is where our constructions begin to fundamentally diverge from the original proof of the KMW theorem.

Let $L \subset \bb Z^d$ be a $d$-dimensional lattice and let $x \in \bb Z^d / L$ be nonzero. Let $T$ be an ordered integral simplex with dimension at least one. Let $\mc K_{T,x}$ be the set of elements of $\mc C$ whose standard form $(p,S,a)$ satisfies the following:
\begin{itemize}
\item $T$ is an entry of $S$, say the $j_0$-th entry.
\item $x$ is a box point of $S$.
\item $a_{ij} \ge c(S,x)_{j}$ for all $i$ and all $j$.
\item $a_{ij_0} > c(S,x)_{j_0}$ for some $i$.
\end{itemize}

We now construct a subdivision $\kappa_{T,x} : \mc K_{T,x} \to \Subd(\mc C)$ over $\mc P$. Let $A \in \mc K_{T,x}$ with standard form $(p,S,a)$. Let $c = c(S,x)$. Let $j_0$ be the unique number such that $S_{j_0} = T$. Let $i_0$ be the smallest number such that $a_{i_0j_0} = \max_{i} a_{ij_0}$. Hence $a_{i_0j_0} > c_j$. To make the notation easier to read, we will assume from now on that $i_0 = j_0 = 1$; the below construction can be easily adjusted to allow for other values of $i_0$ and $j_0$.

Let $v$ be the first vertex of $T$, and let $f$ be the facet of $T$ opposite $v$.
Let $m = \abs{p}$ and $n = \abs{S}$. Let $F = (F_1,\dots,F_n)$ where $F_1 = f$ and $F_i = S_i$ for all $i > 1$.
Let $A' = (p',S,a')$ and $A'' = (p'',F,a'')$ be as in Section~\ref{sec:T}, using $i = i_0 = 1$ and $j = j_0 = 1$.

We consider two cases:

\textbf{Case 1:} $x$ is a box point of $F$.

In this case, we define $\kappa_{T,x}(A) = \gamma_T(A)$, as defined Section~\ref{sec:T}.

\textbf{Case 2:} $x$ is not a box point of $F$.

Let $\mf F = \mf F(S,x)$ be as in Section~\ref{sec:structboxpoints}, so $F \in \mf F$.
Let $x_0$ be the focus of $(S,x)$. Recall from Section~\ref{sec:structboxpoints} that the polytope $x_0 + \sum_{j=1}^n (a_{1j}-c_j)F_j$ is contained in $\sum_{j=1}^n a_{1j} S_j$. Moreover, the lattice distance between $x_0 + \sum_{j=1}^n (a_{1j}-c_j)F_j$ and $\sum_{j=1}^n a_{1j}F_j$ is less than the lattice height of $\sum_{j=1}^n S_j$ with respect to its facet $\sum_{j=1}^n F_j$. Hence, $x_0 + \sum_{j=1}^n (a_{1j}-c_j)F_j$ is contained in the polytope
\[
\conv \left( v + (a_{11}-1)F_1 + \sum_{j=2}^n a_{1j} F_j, \sum_{j=1}^n a_{1j}F_j \right).
\]
Consider the tuple $(p''', F, a''')$, where $p'''$ is obtained from $p$ by replacing the first entry with $p_1 + x_0$, and $a'''$ is obtained from $a$ by replacing the first row with $(a_{1j}-c_j)_{j=1}^n$. It follows from the above discussion that $\Cay(p''',F,a''') \subset \Cay(A'')$. The polytope $\Cay(p''',F,a''')$ is parallel to the facets $\Cay(p,F,a)$ and $\Cay(p',F,a')$ of $\Cay(A'')$.

We now define
\begin{equation} \label{eq:sharpflat}
\begin{aligned}
A^\sharp &= (p^\sharp,F,a^\sharp) \in \mc C \\
A^\flat &= (p^\flat,F,a^\flat) \in \mc C
\end{aligned}
\end{equation}
where
\begin{itemize}
\item $p^\sharp$ is the $(m+1)$-tuple obtained by inserting $p_1 + x_0$ directly before the $1$st entry of $p$.
\item $a^\sharp$ is the $(m+1) \times n$ matrix obtained by inserting the row $(a_{1j}-c_j)_{j=1}^n$ directly above the $1$st row of $a$.
\item $p^\flat$ is the $(m+1)$-tuple obtained by inserting $p_1 + x_0$ directly before the $1$st entry of $p'$.
\item $a^\flat$ is the $(m+1) \times n$ matrix obtained by inserting the row $(a_{1j}-c_j)_{j=1}^n$ directly above the $1$st row of $a'$.
\end{itemize}
We have that
\begin{align*}
\Cay(A^\sharp) &= \conv( \Cay(p''',F,a'''), \Cay(p,F,a)) \\
\Cay(A^\flat) &= \conv( \Cay(p''',F,a'''), \Cay(p',F,a')).
\end{align*}

Now, let $\mf G$ be the set of all tuples $G = (G_1,\dots,G_n)$ such that $G \le F$, $\sum_{j=1}^n G_j$ is a facet of $\sum_{j=1}^n F_j$, and $G \le F'$ for some $F' \in \mf F$ with $F' \neq F$. Let 
\begin{equation} \label{eq:AG}
A^G = (p^G, G, a^G) \in \mc C
\end{equation}
where
\begin{itemize}
\item $p^G$ is the $(m+2)$-tuple obtained by inserting $p_1+x_0$ directly before the 1st entry of $p''$.
\item $a^G$ is the $(m+2)\times n$ matrix obtained by inserting the row $(a_{1j}-c_j)_{j=1}^n$ directly above the $1$st row of $a''$.
\end{itemize}
Then the elements $A^\sharp$, $A^\flat$, and $A^G$ over all $G \in \mf G$ are the maximal elements of a $\mc C$-subdivision of $\Cay(A'')$. Thus, the elements $A'$, $A^\sharp$, $A^\flat$, and $A^G$ over all $G \in \mf G$ are the maximal elements of a $\mc C$-subdivision of $\Cay(A)$. We define $\kappa_{T,x}(A)$ to be this subdivision.

We prove the following two properties of $\kappa_{T,x}$.

\begin{prop} \label{prop:kappaTx}
Let $A \in \mc K_{T,x}$ and $B \in \Max \kappa_{T,x}(A)$. We have the following:
\begin{itemize}
\item If $x$ is a box point of $B$, then $L(A) = L(B)$.
\item If $x$ is not a box point of $B$, then $\ind(L(B)) < \ind(L(A))$.
\end{itemize}
\end{prop}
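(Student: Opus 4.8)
The plan is to run through the maximal elements of $\kappa_{T,x}(A)$ following the two cases in the construction. Normalize $i_0=j_0=1$; write $(p,S,a)$ for the standard form of $A$, $T=S_1$, let $v,w$ be the first two vertices of $T$, $f$ the facet of $T$ opposite $v$, $c=c(S,x)$, and $x_0$ the focus of $(S,x)$. Recall $F_1=f$, $F_i=S_i$ and $L(F_i)=L(S_i)$ for $i>1$, and $L(T)=\bb Z\inn{w-v}\oplus L(f)$, so that
\[
L(A)=L(S_0(A))\oplus\bb Z\inn{w-v}\oplus L(f)\oplus L(S_2)\oplus\dots\oplus L(S_n).
\]

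\smallskip\noindent\emph{Maximal elements at which $x$ is a box point.} In Case~1, $\kappa_{T,x}(A)=\gamma_T(A)$, whose maximal elements lie in $\{A',A''\}$; their second coordinates are $S$ and $F$, at each of which $x$ is a box point (for $F$ this is the Case~1 hypothesis), and $L(A')=L(A'')=L(A)$ by Proposition~\ref{prop:LgammaT}. In Case~2 the maximal elements are $A'$, $A^\sharp$, $A^\flat$ and the $A^G$ ($G\in\mf G$); among these $x$ is a box point only of $A'$, since the second coordinates of $A^\sharp,A^\flat$ are $F$ and that of $A^G$ is $G\le F$, while $x$ is not a box point of $F$ by the Case~2 hypothesis, hence not of any $G\le F$ (a box point of a face tuple is a box point of the ambient tuple, by the witnessing face). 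And $L(A')=L(A)$ by the case $B=A'$ of the proof of Proposition~\ref{prop:LgammaT} --- here $S_0(A')=S_0(A)$, since the shift by $v=v_1$ of the first entry of $p'$ is absorbed by lowering $a_{11}$. This gives the first bullet in all cases, and makes the second vacuous in Case~1.

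\smallskip\noindent\emph{Maximal elements at which $x$ is not a box point (Case~2).} It remains to show $\ind L(B)<\ind L(A)$ for $B\in\{A^\sharp,A^\flat\}\cup\{A^G:G\in\mf G\}$. The unifying point is that $L(B)$ and $L(A)$ are two rank-one extensions of one common sublattice $\Lambda$ spanning the same real subspace: for $A^\sharp$ take $\Lambda=L((p,F,a))$; for $A^\flat$ take $\Lambda=L((p',F,a'))$; for $A^G$ take $\Lambda=L(A''_G)$, where $A''_G$ is $A^G$ with the Cayley summand anchored at $p_1+x_0$ removed. Separating off the $S_0$-summand, one checks directly from the defining tuples that $L(A)=\bb Z\inn{e}+\Lambda$ and $L(B)=\bb Z\inn{g}+\Lambda$, where $e=w-v$ for $B\in\{A^\sharp,A^\flat\}$ (and, for $A^G$, $e$ is the vertex difference by which $L(A'')=L(A)$ extends $L(A''_G)$), and $g$ is the generator of $L(S_0(B))$ given by the difference between the first vertex of the new Cayley summand and that of the adjacent one; explicitly $g=x_0-\sum_j c_j u_j=(x_0-\sum_j c_j v_j)-c_1(w-v)$ for $A^\sharp$, with $u_j$ the first vertex of $F_j$, and $g=(x_0-\sum_j c_j v_j)-(c_1-1)(w-v)$ for $A^\flat$. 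Since $e,g\notin\Span_{\bb R}\Lambda$ while $\Span_{\bb R}L(A)=\Span_{\bb R}L(B)$, both lattices have the same saturation $N$ in $\bb Z^d$; taking $\phi$ the primitive integral functional on $N$ vanishing on $\Span_{\bb R}\Lambda$, multiplicativity of the index yields $\ind L(A)=\lvert\phi(e)\rvert\,\ind\Lambda$ and $\ind L(B)=\lvert\phi(g)\rvert\,\ind\Lambda$, so $\ind L(B)/\ind L(A)=\lvert\phi(g)\rvert/\lvert\phi(e)\rvert$. Now $\phi$ vanishes on $V(\Sum F)\subseteq\Span_{\bb R}\Lambda$, and $V(\Sum S)=\bb R\inn{w-v}\oplus V(\Sum F)$; using the explicit form $x_0-\sum_j c_j v_j=\sum_{i,j}x_j^i e_j^i$ of the focus representative (notation of the box-point footnote), and that each $e_1^i$ has $(w-v)$-component $1$ while $e_j^i\in V(\Sum F)$ for $j>1$, the $(w-v)$-component of $x_0-\sum_j c_j v_j$ is $\sum_i x_1^i$. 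Hence for $A^\sharp$ the ratio equals $\lvert\sum_i x_1^i-c_1\rvert=c_1-\sum_i x_1^i$, and for $A^\flat$ it equals $\sum_i x_1^i-(c_1-1)$; since $F\in\mf F(S,x)$ in Case~2, one has $c_1\ge 1$ and $\sum_i x_1^i\in(c_1-1,c_1)$ strictly, so both ratios lie in $(0,1)$, proving $\ind L(B)<\ind L(A)$.

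\smallskip The step I expect to take the most care is the last bullet for $B=A^G$: because $\Sum G$ is a facet of $\Sum F$ --- codimension two in $\Sum S$ --- and the first vertices of the $G_j$ need not agree with those of the $F_j$, pinning down $\Lambda=L(A''_G)$, the directions $e,g$, and the equality $\Span_{\bb R}L(A^G)=\Span_{\bb R}L(A)$ takes more bookkeeping than for $A^\sharp,A^\flat$ (which are direct analogues of the elements treated in Sections~\ref{sec:KMW} and \ref{sec:structboxpoints}). A cleaner alternative I would try first is to relate $A^G$ to the concentric box-point complex of Section~\ref{sec:structboxpoints} attached to the facet $\Sum F$ of $\Sum S$ and restricted to its face $\Sum G$, so that the strict index drop is inherited from the inequality recorded there, using that $F'\in\mf F(S,x)$ for the $F'\in\mf F$ with $G\le F'$.
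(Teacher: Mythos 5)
Your proof is correct where it is carried out, and its skeleton matches the paper's: the maximal elements at which $x$ is a box point ($A'$ and $A''$ in Case~1, $A'$ in Case~2) are dispatched by Proposition~\ref{prop:LgammaT}, exactly as in the paper. Where you genuinely diverge is the quantitative step for $A^\sharp$ and $A^\flat$: the paper simply quotes the facts recorded in Section~\ref{sec:structboxpoints} --- writing $h_E$ for the lattice height of $\sum_j S_j$ over the facet $\sum_j E_j$ and $h_E'$ for the lattice distance from $x_0$ to $\sum_j c_j E_j$, with $0<h_E'<h_E$ for every $E\in\mf F$ --- and reads off $\ind(L(A^\sharp))=h_F'\ind(L(F))$ and $\ind(L(A^\flat))=(h_F-h_F')\ind(L(F))$ against $\ind(L(A))=h_F\ind(L(F))$. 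Your primitive-functional computation of $\ind(L(B))/\ind(L(A))=\lvert\phi(g)\rvert/\lvert\phi(e)\rvert$, with the explicit values $c_1-\sum_i x_1^i$ and $\sum_i x_1^i-(c_1-1)$, re-derives exactly these ratios $h_F'/h_F$ and $(h_F-h_F')/h_F$; it is more self-contained, though the strictness $\sum_i x_1^i\in(c_1-1,c_1)$ is at bottom the same Section~\ref{sec:structboxpoints} assertion that $x_0$ avoids $\sum_j c_jF_j$ when $x$ is not a box point of $F$, so the two arguments buy essentially the same thing. The one incomplete step is $B=A^G$, which you flag yourself. There the paper does precisely your ``cleaner alternative,'' and it is two lines: by the definition of $\mf G$ there is $E\in\mf F$ with $G\le E$ and $E\neq F$, and then $\ind(L(A^G))=h_E'\ind(L(p'',G,a''))=h_E'\ind(L(E))<h_E\ind(L(E))=\ind(L(A))$; no bookkeeping in the style of your $\Lambda$-argument is required. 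So nothing in your approach would fail, but as written the $A^G$ case is a sketch rather than a proof; to finish, instantiate the Section~\ref{sec:structboxpoints} height inequality at the second facet $E$ containing $G$ rather than at $F$, exactly as you proposed.
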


\begin{proof}
If we are in Case 1, then $x$ is a box point of $B$ and $L(A) = L(B)$ by Proposition~\ref{prop:LgammaT}. Assume we are in Case 2. If $B = A'$, then $x$ is a box point of $B$ and $L(A) = L(B)$ by Proposition~\ref{prop:LgammaT}. So assume $B = A^\sharp$, $A^\flat$, or $A^G$ for some $G \in \mf G$. Thus $x$ is not a box point of $B$.

Let $F$, $\mf F$ and $x_0$ be as above. For each $E \in \mf F$, let $h_E$ be the lattice height of $\sum S_j$ with respect to its facet $\sum E_j$. Let $h_E'$ be the lattice distance between $x_0$ and $\sum E_j$. Hence $0 < h_E' < h_E$ for all $E \in \mf F$.

If $B = A^\sharp$, then we have
\[
\ind(L(B)) = h_F' \ind(L(F)) < h_F \ind(L(F)) = \ind(L(A)).
\]
If $B = A^\flat$, then we have
\[
\ind(L(B)) = (h_F-h_F') \ind(L(F)) < h_F \ind(L(F)) = \ind(L(A)).
\]
Finally, suppose $B = A^G$ for some $G \in \mf G$. Let $E \in \mf F$ such that $G \le E$ and $E \neq F$. Then
\begin{multline*}
\ind(L(B)) = h_E' \ind(L(p'',G,a'')) = h_E' \ind(L(E))  \\ < h_E \ind(L(E)) = \ind(L(A)).
\end{multline*}
In all cases $\ind(L(B)) < \ind(L(A))$, completing the proof.
\end{proof}

\begin{prop} \label{prop:facekappaTx}
Let $A \in \mc K_{T,x}$ and suppose $\mc B \in \mc C$ such that $B \le_{\mc C} A$. Then one of the following hold:
\begin{itemize}
\item $\kappa_{T,x}(A|B) = \kappa_{T',x}(B)$ for some $T'$
\item $\kappa_{T,x}(A|B) = \gamma_{T'}(B)$ for some $T'$
\item $\kappa_{T,x}(A|B) = \triv_{\mc D}(B)$.
\end{itemize}
\end{prop}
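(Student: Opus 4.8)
The plan is a direct case analysis on the face $B\le_{\mc C}A$, parallel to the proof of Proposition~\ref{prop:facegammaT} but with more cases because $\kappa_{T,x}$ has more maximal elements.

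First I would dispose of Case~1 of the definition of $\kappa_{T,x}$: there $\kappa_{T,x}(A)=\gamma_T(A)$, so $\kappa_{T,x}(A|B)=\gamma_T(A|B)$, and Proposition~\ref{prop:facegammaT} already identifies this as $\gamma_{T'}(B)$ for some $T'$ or $\triv_{\mc C}(B)$. So assume we are in Case~2. Write the standard form of $A$ as $(p,S,a)$, normalized so that $i_0=j_0=1$, and let $c=c(S,x)$, let $v$ be the first vertex of $T=S_1$ and $f$ the facet of $T$ opposite $v$, $F=(f,S_2,\dots,S_n)$, and $x_0$ the focus of $(S,x)$; write $B=(p_I,S',a_{I\times\bullet})$ with $\emptyset\neq I\subseteq[\abs{p}]$ and $S'\le S$. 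The restriction $\kappa_{T,x}(A|B)$ is the subcomplex of $\{A',A^\sharp,A^\flat\}\cup\{A^G:G\in\mf G\}$ supported on $\Cay(B)$, and I would compute, cell by cell, which faces of these maximal elements land in $\Cay(B)$.

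Carrying this out, the outcome is governed by whether $1\in I$, by which face of $T$ the simplex $S_1'$ is, and by whether $x$ survives as a box point of $S'$ with the conditions defining $\mc K_{\bullet,x}$. Roughly: if $1\notin I$, or $S_1'$ sits below $f$ with $x$ not a box point of $S'$ in the relevant way, or $S_1'$ is pushed onto $v$, then only one maximal element meets $\Cay(B)$ in a cell of full dimension (up to already-present faces), and $\kappa_{T,x}(A|B)=\triv_{\mc C}(B)$; here one argues as in Proposition~\ref{prop:facegammaT}, using that all maximal elements of $\kappa_{T,x}(A)$ agree with $A$ away from the first Cayley summand, with any new summands inserted just above it. If instead $S_1'$ contains $v$ and another vertex but $x$ is not a box point of $S'$, then the first summand of $B$ is cut at $v$ in the $S_1'$-direction with no box-point refinement, and $\kappa_{T,x}(A|B)=\gamma_{S_1'}(B)$. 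In the remaining cases the box-point refinement survives and, using $c(S',x)=c(S,x)$ and that the focus of $(S',x)$ is again $x_0$ (two small facts I would record separately), the surviving cells reassemble into $\kappa_{T'',x}(B)$ for the appropriate simplex $T''$, or into $\gamma_{T''}(B)$ if $B$ fails the strict-inequality coefficient condition in the definition of $\mc K_{T'',x}$.

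The main obstacle is this last family of cases, and within it the matching of the side pieces $A^G$ with those the construction produces for $B$. The difficulty is that $F$ plays a role distinguished among the members of $\mf F(S,x)$, so passing to a face of $\Cay(F)$ can make a different $E\in\mf F(S,x)$, $E\neq F$, become the distinguished facet tuple relative to $B$; one must check that the sets $\mf F$ and $\mf G$, the foci, and the coefficient tuples all restrict compatibly so that this reindexing is exactly mirrored by how the $A^G$ restrict onto $\Cay(B)$. Once these compatibilities are in place, each case is a routine verification, and collecting the outcomes gives the three alternatives in the statement.
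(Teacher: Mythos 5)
Your proposal follows essentially the same route as the paper's proof: a case analysis on whether the distinguished row index lies in $I$, whether $S_1'$ contains the first vertex of $T$ together with another vertex, and whether $x$ remains a box point of $S'$, yielding $\kappa_{T',x}(B)$, $\gamma_{T'}(B)$, or $\triv_{\mc C}(B)$ accordingly. The paper's own proof is in fact terser---it simply records these cases and their outcomes without carrying out the cell-matching details you flag---so your plan, once those routine compatibilities ($c(S',x)=c(S,x)$, preservation of the focus, and the restriction behavior of $\mf F$ and $\mf G$) are checked, is the same argument.
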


\begin{proof}
Let the standard form of $A$ be $(p,S,a)$. Let $B = (p_I,S',a_{I \times \bullet})$ with $S' \le S$ and $I$ a nonempty subset of $[\abs{p}]$. Assume that $T = S_j$, and let $i$ be the smallest number such that $a_{ij} = \max_{i'} a_{i'j}$. If $i \in I$ and $S_j'$ contains the first vertex of $T$ and another vertex, then
\[
\kappa_{T,x}(A|B) = \begin{dcases*}
\kappa_{S_j',x}(B) & if $x$ is a box point of $S'$ \\
\gamma_{S_j'}(B) & if $x$ is not a box point of $S'$
\end{dcases*}
\]
Otherwise, we have $\gamma_T(A|B) = \triv_{\mc C}(B)$, as desired.
\end{proof}

\section{Main proof} \label{sec:main}

Let $d$ be a positive integer and let $c$ be any integer greater than or equal to $d!+d$. In this section we prove the following.

\begin{thm} \label{thm:main2}
For any $d$-dimensional integral polytope $P$ in $\bb R^d$, there exists a positive integer $N$ such that for all nonnegative integers $r$ and $s$, $(rc^N + s(d!)^N)P$ has a unimodular triangulation.
\end{thm}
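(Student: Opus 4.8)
The plan is to imitate the proof of the KMW theorem from Section~\ref{sec:KMW}, but carried out in a \emph{mixed} setting so that the two dilation parameters $c$ and $d!$ are tracked at once. Fix a triangulation $X$ of $P$, and recall that the obstruction to a triangulation $Y$ being unimodular is measured by the finite multiset of lattices $\ms L(Y) = \{L(S) : S \in \Max Y\}$ and in particular by $\max_{S}\ind L(S)$. The point will be to produce, \emph{uniformly in} $(r,s)$, a mixed subdivision of $(rc^N + s(d!)^N)P$ into unimodular polysimplices, after which refining each polysimplex by its staircase triangulation gives a unimodular triangulation. Concretely, for $N=0$ one starts from the ``free'' (staircase) fine mixed subdivision of $\sum_{i=1}^{r}P + \sum_{j=1}^{s}P$ induced by $X$, in which there are $r$ copies of $P$ of ``type $c$'' and $s$ of ``type $d!$''; a maximal cell of this object is a product of edges of some $S \in \Max X$, so it is a polysimplex with attached lattice $L(S)$, and crucially the multiset of lattices attached to maximal cells is exactly $\ms L(X)$ regardless of $(r,s)$. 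A cell is unimodular precisely when $\ind L(S)=1$, so the game is again index reduction.

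The technical heart is a mixed analogue of the Cayley poset $\mc C$: I would introduce a $2\mc P$-poset $\mc D$ whose elements record Cayley sums of Minkowski sums $p_i + \sum_j a_{ij}S_j$ in which the multiplicity data (equivalently, the simplices) is partitioned into a ``type $c$'' part and a ``type $d!$'' part, the $2\mc P$-structure being the obvious one. One then builds mixed analogues of the $\gamma_T$-subdivisions of Section~\ref{sec:T} and of the $\kappa_{T,x}$-subdivisions of Section~\ref{sec:kappaTx}, i.e.\ stellar and concentric index-lowering moves (as in Section~\ref{sec:structboxpoints}) applied to a chosen type, producing the $A^{\sharp},A^{\flat},A^{G}$-style decompositions there. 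Using Theorem~\ref{thm:confluence} one checks that the family of all these mixed subdivisions is locally confluent, terminating, and facially compatible, and thereby obtains, for each $d$-dimensional lattice $L\subset\bb Z^d$ and nonzero $x\in\bb Z^d/L$, a canonical subdivision $\Theta_x$ over $2\mc P$ which strictly lowers $\ind L(B)$ on every maximal cell $B$ having $x$ as a box point, leaves the index unchanged on all other maximal cells, and costs exactly one dilation by $c$ on the type-$c$ data and one dilation by $d!$ on the type-$d!$ data. (The hypothesis $c\ge d!+d$ is what leaves room to perform a type-$c$ concentric step whose box-point offset, of size at most $d$, need not divide $c$.)

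With $\Theta_x$ in hand the main loop runs exactly as in the proof of Theorem~\ref{thm:KMW}. Given the current mixed object $Y_k$, a mixed subdivision of $(rc^{N_k}+s(d!)^{N_k})P$ whose maximal-cell lattices form a multiset $\ms L_k$ independent of $(r,s)$, if $\ms L_k$ contains a lattice of index $>1$ we pick a nonzero $x$ in $\bb Z^d/L$ for such $L$ and set $Y_{k+1}=\Theta_x^\ast(Y_k)$ using the refinement operator of Proposition~\ref{prop:canonicalrefinement}; by the properties of $\Theta_x$ this is a mixed subdivision of $(rc^{N_k+1}+s(d!)^{N_k+1})P$ whose lattice multiset is obtained from $\ms L_k$ by replacing at least one lattice with lattices of strictly smaller index, and this replacement is the same for every $(r,s)$. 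Since the indices involved never exceed the initial bound $\max_{S\in\Max X}\ind L(S)$, the multiset of indices strictly decreases in the multiset order, so after finitely many steps $N$ — a number depending only on $X$, hence only on $P$, and in particular \emph{not} on $(r,s)$ — every maximal cell has lattice $\bb Z^d$, i.e.\ is a unimodular polysimplex, and we are reduced to staircase triangulations as above.

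The main obstacle, as the introduction warns, is exactly the construction of the mixed $\kappa_{T,x}$-type subdivisions together with the verification that the whole family of moves is locally confluent, so that Newman's lemma (via Theorem~\ref{thm:confluence}) applies. The naive mixed analogues of Sections~\ref{sec:T}--\ref{sec:kappaTx} fail to close up under the move relation: a type-$c$ concentric step whose box-point offset does not evenly divide $c$ leaves a small ``core'', dilated by an amount less than $d$, that has to be reconciled with the type-$d!$ data, and doing this coherently requires the more elaborate decompositions and some genuinely new subdivisions, followed by a careful case analysis showing that all of the resulting moves commute up to further moves. A secondary point that must be checked is precisely the uniformity of the process in $(r,s)$ — that the sequence of index-lowering steps, and hence $N$, can be chosen independently of how many type-$c$ and type-$d!$ copies are present — which follows because each $\Theta_x$ is canonical and its effect on the attached lattices is insensitive to that data.
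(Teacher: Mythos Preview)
Your overall architecture is right --- work in a $2\mc P$-poset, build mixed analogues of $\gamma_T$ and $\kappa_{T,x}$, feed them through Theorem~\ref{thm:confluence}, and iterate box-point index reduction --- but your handling of the $(r,s)$-uniformity is off, and this is not a secondary point. You propose to start from an $(r+s)$-fold staircase mixed subdivision of $\sum_{i=1}^r P + \sum_{j=1}^s P$ and then argue that the lattice multiset and the index-reduction steps are insensitive to $(r,s)$. That is not how the paper proceeds, and it is not clear your version works: once you carry actual $(r+s)$-fold objects, every move has to be checked to interact identically across all $r$ and $s$, and canonicity over $2\mc P$ says nothing about that. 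The paper instead works with \emph{exactly two} summands throughout: the $\mc D$-subdivision has $2$-support $(P,P)$, then one applies a fixed map $\theta$ sending $(p,S,a)\times(q,S,b)\times 0$ to $(cp,S,ca)\times(d!q,S,d!b)\times|S|$, runs the canonical $\Delta_x$, and repeats, arriving after $N$ steps at a $\mc D$-subdivision $Y$ with $2$-support $(c^N P,(d!)^N P)$ and all lattices $\bb Z^d$. Only then is the map $\omega_{r,s}\colon (p,S,a)\times(q,S,b)\times 0 \mapsto (rp+sq,S,ra+sb)\in\mc C$ applied, followed by $\Gamma^\ast$. Uniformity in $(r,s)$ is thus automatic, because $(r,s)$ never enters the iteration.

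Two further gaps in your sketch of $\mc D$ and the moves. First, $\mc D$ is not ``simplices partitioned into a type-$c$ part and a type-$d!$ part'': an element is a pair $(p,S,a)\times(q,S,b)$ sharing the \emph{same} simplex tuple $S$, together with a cutoff $k$, subject to constraints that split $\mc D$ into two regimes $\mc M$ and $\mc N$. The reason for this rigidity is the obstruction flagged at the start of Section~\ref{sec:main}: a generic mixed cell $(P,Q)$ with $P,Q$ independent can have $\ind(P+Q)>\ind(P)\ind(Q)$, and then no amount of dilation yields a unimodular mixed refinement. The $\mc M/\mc N$ constraints are precisely what keep all intermediate cells away from this trap; your proposal does not identify this obstruction or how to avoid it. Second, beyond the mixed $\gamma_T$ (the paper's $\mu_T,\nu_T$) and mixed $\kappa_{T,x}$ (the paper's $\rho_x$), the argument needs the $\epsilon$ subdivision of Section~\ref{sec:epsilon}, which repackages the Cayley vertices $p_i$ as a new simplex and is what moves elements between the $\mc M$ and $\mc N$ regimes, together with two further index-lowering moves $\sigma_x,\tau_x$ tailored to the two regimes. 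These are the ``genuinely new subdivisions'' you allude to; without naming them and the $\mc M/\mc N$ structure they mediate, the local-confluence check cannot even be formulated.
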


Taking $c$ to be relatively prime to $d!$, this theorem will imply Theorem~\ref{thm:main}, as desired.

The overall strategy of the proof is similar to the proof of the KMW theorem in Section~\ref{sec:KMW}. We will fix a box point $x$, and then construct two types of subdivisions: one that preserves lattices that do not have $x$ as a box point, and one that replaces lattices which do have $x$ as a box point with lattices of smaller index. There are two main differences between the current proof and that of the KMW theorem. The first is that we work with intermediate dilations by $c$, rather than just $d!$. To deal with this we will use the subdivisions $\kappa_{T,x}$ defined earlier. The second difference is that we work over $2\mc P$. Roughly, this raises the following problem: We might produce an element $(P,Q) \in 2 \mc P$ where $P$ and $Q$ are independent and $\ind(P+Q) > \ind(P)\ind(Q)$. Such an element can never have a mixed subdivision into elements of index 1, even after dilation. Thus, we must avoid such elements. The way this is done is to work within a carefully defined $2\mc P$-poset $\mc D$ which always avoids such elements, and construct subdivisions which always remain in $\mc D$. These restrictions are the reason why this proof is much more complicated than the proof of the KMW theorem or the arguments in \cite{ALT18}.

\subsection{The poset $\mc D$} \label{sec:posetD}

\subsubsection{The poset $\mc M$}

We first define a poset $\mc M_0$ as follows. Its elements are all elements
\[
(p,S,a) \times (q,S,b)  \times k \in \mc C_0 \times \mc C_0 \times \bb Z,
\]
satisfying the following properties.
\begin{enumerate}[label=(\Roman*)]
\item $\abs{p} = \abs{q}$.
\item $0 \le k \le \abs{S}$.
\item $p_i-q_i$ is constant over all $i$
\item \label{samedifference} For each $j \le k$, we have $a_{ij} = b_{ij}$ for all $i$.
\item For each $j > k$, $a_{ij} = 1$ for all $i$, and either $b_{ij} = 0$ for all $i$ or $b_{ij} = 1$ for all $i$. 
\end{enumerate}

We define an equivalence relation on $\mc M_0$ as follows. If $j$ is such that $j \le k$ and either $S_j$ is a point or $a_{ij} = 0$ for all $i$, we set
\begin{multline*}
(p,S,a) \times (q,S,b) \times k \sim \\
((p_i+a_{ij}S_j)_{i \in [\abs{p}]}, S_{\setminus j}, a_{\bullet \times \setminus j}) \times ((q_i+b_{ij}S_j)_{i \in [\abs{q}]}, S_{\setminus j}, b_{\bullet \times \setminus j}) \times (k-1).
\end{multline*}
Also, if $j$ is such that $j > k$ and $S_j$ is a point, then we set
\begin{multline*}
(p,S,a) \times (q,S,b) \times k \sim \\
((p_i+a_{ij}S_j)_{i \in [\abs{p}]}, S_{\setminus j}, a_{\bullet \times \setminus j}) \times ((q_i+b_{ij}S_j)_{i \in [\abs{q}]}, S_{\setminus j}, b_{\bullet \times \setminus j}) \times k.
\end{multline*}
(Note that if $j > k$, then by definition $a_{ij} \neq 0$ for any $i$.) We define $\mc M := \mc M_0 / \sim$. We define the \emph{standard form} of an element $A \in \mc M$ to be the unique representative $(p,S,a) \times (q,S,b) \times k \in \mc M_0$ of $A$ such that $(p,S,a)$ is in standard form.

For $A$, $B \in \mc M$ we let $B \le_{\mc M} A$ if the standard form of $A$ is $(p,S,a) \times (q,S,b) \times k$ and $B$ has a representative of the form
\[
(p_I,S',a_{I \times \bullet}) \times (q_I,S',b_{I \times \bullet}) \times k
\]
where $I$ is a nonempty subset of $[\abs{p}]$ and $S' \le S$. Then $\le_{\mc M}$ is a partial order on $\mc M$.

Let $\Cay : \mc M \to 2\mc P$ be the map $\Cay(U \times V \times k) = (\Cay(U),\Cay(V))$. This is a well-defined poset map. Hence $(\mc M,\Cay)$ is a $2\mc P$-poset. Furthermore, $(\mc M,\Sum \circ \Cay)$ is a perfect $\mc P$-poset.

\subsubsection{The poset $\mc N$}

We similarly define a poset $\mc N$ as follows. Its elements are all elements
\[
(p,S,a) \times (q,S,b)  \times k \in \mc C_0 \times \mc C_0 \times \bb Z,
\]
satisfying the following properties.
\begin{enumerate}[label=(\Roman*), resume]
\item $\abs{q} = 1$.
\item $0 \le k \le \abs{S}$.
\item $a_{ij} \ge b_{1j}$ for all $i$, $j$.
\item For each $j > k$, $a_{ij} = 1$ for all $i$, and $b_{1j} = 0$ or 1.
\end{enumerate}
We define equivalence relation $\sim$ on $\mc N_0$ analogously to the previous section and define $\mc N = \mc N_0 / \sim$. As before, we define the \emph{standard form} of an element $A \in \mc N$ to be the unique representative $(p,S,a) \times (q,S,b) \times k \in \mc N_0$ of $A$ such that $(p,S,a)$ is in standard form.

For $A$, $B \in \mc N$ we let $B \le_{\mc N} A$ if the standard form of $A$ is $(p,S,a) \times (q,S,b) \times k$ and $B$ has a representative of the form
\[
(p_I,S',a_{I \times \bullet}) \times (q,S',b) \times k
\]
where $I$ is a nonempty subset of $[\abs{p}]$ and $S' \le S$. Then $\le_{\mc N}$ is a partial order on $\mc N$.

As before, let $\Cay : \mc N \to 2\mc P$ be the map $\Cay(U \times V \times k) = (\Cay(U),\Cay(V))$. This is a well-defined poset map. Hence $(\mc N,\Cay)$ is a $2\mc P$-poset. Furthermore, $(\mc N,\Sum \circ \Cay)$ is a perfect $\mc P$-poset.

\subsubsection{The poset $\mc D$}

The set $\mc M \cap \mc N$ is a poset ideal of both $\mc M$ and $\mc N$; specifically, it is the set of elements $(p,S,a) \times (q,S,b) \times k$ in $\mc M$ or $\mc N$ such that $\abs{p} = 1$ and \ref{samedifference} holds. Moreover, if $A$, $B \in \mc M \cap \mc N$, then $A \le_{\mc M} B$ if and only if $A \le_{\mc N} B$. Hence, we can define a poset $\mc D$ on the set $\mc M \cup \mc N$ where $A \le_{\mc D} B$ if and only if $A \le_{\mc M} B$ or $A \le_{\mc N} B$.

As above, we define a map $\Cay : \mc D \to 2 \mc P$ by $\Cay(U \times V \times k) = (\Cay(U),\Cay(V))$. Then $(\mc D,\Cay)$ is a $2\mc P$-poset, and $(\mc D,\Sum \circ \Cay)$ is a perfect $\mc P$-poset.

For $A \in \mc D$ with standard form $(p,S,a) \times (q,S,b) \times k$, we define $L(A) := L(p,S,a)$.
We say that $x$ is a box point of $A$ if $x$ is a box point of $S_{[k]}$.

\subsection{$\mu_T$, $\nu_T$, and $\epsilon$ subdivisions}

In this section we define three subdivisions on subposets of $\mc D$. These subdivisions play the role of $\gamma_T$ in that they preserve lattices.

\subsubsection{$\mu_T$ subdivisions}

Let $T$ be an ordered integral simplex of dimension at least 1. Let $\mc M_T$ be the set of elements of $\mc M$ whose standard form $(p,S,a) \times (q,S,b) \times k$ has the property that $T$ is an entry of $S_{[k]}$.

We construct a subdivision as follows $\mu_T : \mc M_T \to \Subd(\mc D)$. Let $A \in \mc M_T$ with standard form $U \times V \times k = (p,S,a) \times (q,S,b) \times k$. Assume $T$ is the $j$-th entry of $S$, where $j \le k$. We have that $U \in \mc C_T$, where $\mc C_T$ is as in Section~\ref{sec:T}. Also, by \ref{samedifference}, we have that the $j$-th column of $b$ is not all 0, so $V \in \mc C_T$.

Following the construction of $\gamma_T(U)$ and $\gamma_T(V)$, let $U'$, $U''$, $V'$, and $V''$ be as in \eqref{eq:A'A''}. Then $U' \times V' \times k$, $U'' \times V'' \times k \in \mc M$. We define
\[
\mu_T(A) = \triv_{\mc D}(U' \times V' \times k) \cup \triv_{\mc D}(U'' \times V'' \times k).
\]
Then $\mu_T(A)$ is a $\mc D$-subdivision with 2-support $\Cay(U) \times \Cay(V) = \Cay(A)$. Hence, $\mu_T : \mc M_T \to \Subd(\mc D)$ is a subdivision over $2 \mc P$.

From Proposition~\ref{prop:LgammaT}, we have the following.

\begin{prop} \label{prop:LmuT}
Let $A \in \mc M_T$ and $B \in \Max \mu_T(A)$. Then $L(A) = L(B)$.
\end{prop}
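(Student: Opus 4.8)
## Proof proposal

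The plan is to reduce Proposition~\ref{prop:LmuT} directly to the already-established Proposition~\ref{prop:LgammaT}, using the fact that the lattice $L(A)$ of an element $A \in \mc D$ depends only on the first factor of its standard form. Recall that for $A \in \mc D$ with standard form $U \times V \times k = (p,S,a) \times (q,S,b) \times k$, we have by definition $L(A) := L(U) = L(p,S,a)$. Thus the second factor $V$ and the integer $k$ play no role in the lattice.

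First I would unwind the construction of $\mu_T(A)$. By definition, if $T$ is the $j$-th entry of $S$ with $j \le k$, then $\mu_T(A) = \triv_{\mc D}(U' \times V' \times k) \cup \triv_{\mc D}(U'' \times V'' \times k)$, where $U', U''$ (and $V', V''$) are exactly the elements produced by the $\gamma_T$ construction of \eqref{eq:A'A''} applied to $U \in \mc C_T$ (resp.\ $V \in \mc C_T$). Hence any $B \in \Max \mu_T(A)$ has standard form either $U' \times V' \times k$ or $U'' \times V'' \times k$; in particular the first factor of $B$ is $U'$ or $U''$, which is precisely an element of $\Max \gamma_T(U)$ (after passing to standard form, noting that the $\gamma_T$ construction may change which columns are nonzero but this does not affect $L$).

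Next I would apply Proposition~\ref{prop:LgammaT}: since $U \in \mc C_T$ and the first factor of $B$ lies in $\Max \gamma_T(U)$, we get $L(U) = L(\text{first factor of }B)$. Combining with the definitions $L(A) = L(U)$ and $L(B) = L(\text{first factor of }B)$ yields $L(A) = L(B)$, as desired. The one small point to verify is that the ``first factor of $B$'' as computed from the \emph{standard form} of $B$ agrees, as a lattice-generating datum, with $U'$ or $U''$; this is immediate because passing to standard form only deletes all-zero columns of the matrix, which corresponds to dropping simplices $S_j$ that contribute nothing to $L$, and the definition of $S_0$ and $L$ of a $\mc C$-element is insensitive to such columns.

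I do not expect any genuine obstacle here: the statement is essentially a bookkeeping corollary, and the phrase ``From Proposition~\ref{prop:LgammaT}, we have the following'' in the excerpt signals that the intended proof is exactly this one-line reduction. The only care needed is to make sure the correspondence between $\mu_T$ on $\mc D$ and $\gamma_T$ on the first $\mc C$-factor is stated cleanly, so that Proposition~\ref{prop:LgammaT} can be quoted verbatim. Thus the proof is: $B \in \Max \mu_T(A)$ forces the first $\mc C$-factor of $B$ to lie in $\Max \gamma_T(U)$ where $U$ is the first $\mc C$-factor of $A$; Proposition~\ref{prop:LgammaT} gives $L(U) = L(\text{first factor of }B)$; and since $L(\cdot)$ on $\mc D$ is by definition $L$ of the first factor, $L(A) = L(B)$.
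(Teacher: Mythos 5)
Your proposal is correct and is exactly the paper's intended argument: the paper gives no separate proof, simply stating that the proposition follows ``From Proposition~\ref{prop:LgammaT}'', since the maximal elements of $\mu_T(A)$ have first $\mc C$-factor $U'$ or $U''$ from the $\gamma_T$ construction applied to $U$, and $L$ on $\mc D$ is by definition $L$ of the first factor. Your additional remarks on standard form are fine bookkeeping and do not change the argument.
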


From the proof of Proposition~\ref{prop:facegammaT}, we have the following.

\begin{prop} \label{prop:facemuT}
Let $A \in \mc M_T$ and suppose $\mc B \in \mc D$ such that $B \le_{\mc D} A$. Then either $\mu_T(A|B) = \mu_{T'}(B)$ for some $T'$ or $\mu_T(A|B) = \triv_{\mc D}(B)$.
\end{prop}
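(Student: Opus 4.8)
The plan is to deduce Proposition~\ref{prop:facemuT} directly from the corresponding statement for $\gamma_T$, namely Proposition~\ref{prop:facegammaT}, by tracking how the restriction operation $(\cdot\,|\,\cdot)$ interacts with the product structure on $\mc D$. First I would unwind the definitions: write the standard form of $A$ as $U \times V \times k = (p,S,a) \times (q,S,b) \times k$ with $T = S_j$, $j \le k$, and take $B \le_{\mc D} A$. Since $\mu_T(A)$ has maximal elements $U' \times V' \times k$ and $U'' \times V'' \times k$ (as built from the $\gamma_T$-data of $U$ and of $V$), the restriction $\mu_T(A|B)$ is determined by restricting each factor; concretely, $\mu_T(A|B)$ should be the $\mc D$-subcomplex supported on $\Cay(B)$ whose two factors are $\gamma_T(U|U_B)$ and $\gamma_T(V|V_B)$ glued along the same $k$ (or $k-1$, if the restriction deletes the column $j$). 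The key point is that because condition~\ref{samedifference} forces the first $k$ columns of $a$ and $b$ to agree, the combinatorial ``case'' in the proof of Proposition~\ref{prop:facegammaT} — whether $i \in I$ and whether $S_j'$ contains the first vertex of $T$ plus one more — is the \emph{same} case for $U$ and for $V$ simultaneously.

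The main steps, in order: (1) Fix the standard form and $B = (p_I, S', a_{I\times\bullet}) \times (q_I, S', b_{I\times\bullet}) \times k$ (or with $k$ decremented if $S'$ drops the $j$-th factor of $S$), and let $i$ be the smallest index with $a_{ij} = \max_{i'} a_{i'j}$. (2) Split into the two cases exactly as in Proposition~\ref{prop:facegammaT}: either ($i \in I$ and $S_j'$ contains the first vertex $v$ of $T$ and another vertex) or not. (3) In the first case, verify that both $U|U_B = \gamma_{S_j'}(U_B)$ and $V|V_B = \gamma_{S_j'}(V_B)$, using that $b_{ij}=a_{ij}\ge c_j$-type positivity is automatic here — actually just that column $j$ of $b$ is nonzero, which is \ref{samedifference} again — and conclude $\mu_T(A|B) = \mu_{S_j'}(B)$, noting $B \in \mc M_{S_j'}$ since $S_j'$ is still an entry of $S'_{[k]}$. (4) In the second case, each factor restricts to a trivial $\gamma$-restriction, so $\mu_T(A|B) = \triv_{\mc D}(B)$. (5) Handle the bookkeeping: check that when $B$ lies in $\mc M \cap \mc N$ (the ideal where $\abs{p}=1$) the answer still lands in $\mc D$ and the statement is unaffected, and check the degenerate sub-case where the $\gamma_T$ construction collapses to a single maximal element.

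The step I expect to be the main obstacle — though still largely routine — is (5), the careful matching of the parameter $k$ and the standard forms under restriction, particularly ensuring that the two factors $U$ and $V$ are restricted \emph{compatibly} so that the result is a genuine element of $\Subd(\mc D)$ rather than just a pair of $\mc C$-subdivisions. One must confirm that deleting a column or passing to a face $I$ does not break properties \ref{samedifference} or (V) defining $\mc M$, and that the gluing along $k$ (versus $k-1$) is the one forced by the equivalence relation $\sim$. Since the paper asserts ``from the proof of Proposition~\ref{prop:facegammaT}, we have the following,'' I anticipate the intended argument is exactly this reduction: the single-factor analysis is verbatim Proposition~\ref{prop:facegammaT}, and the only new content is observing that the product and the index $k$ come along for the ride without interference, which is guaranteed by \ref{samedifference}. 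I would therefore keep the proof to a few lines, citing Proposition~\ref{prop:facegammaT} for each factor and remarking that the cases coincide for $U$ and $V$.
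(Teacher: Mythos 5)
Your proposal is correct and matches the paper's intended argument: the paper gives no separate proof, simply invoking the proof of Proposition~\ref{prop:facegammaT}, and your reduction—running that case analysis on each Cayley factor and observing that condition~\ref{samedifference} forces the same distinguished row $i$ and the same case for $U$ and $V$, so the restriction is either $\mu_{S_j'}(B)$ or $\triv_{\mc D}(B)$—is exactly that. Your bookkeeping in step (5) about $k$, standard forms, and the $\mc M \cap \mc N$ ideal is the right thing to check and poses no obstruction.
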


\subsubsection{$\nu_T$ subdivisions}

Let $T$ be an ordered integral simplex of dimension at least 1. Let $\mc N_T$ be the set of elements of $\mc N$ whose standard form $(p,S,a) \times (q,S,b) \times k$ has the property that
\begin{enumerate}[label=(\alph*)]
\item $T$ is an entry of $S_{[k]}$, say the $j$-th entry.
\item There is some $i$ such that $a_{ij} > b_{1j}$.
\end{enumerate}

We construct a subdivision as follows $\nu_T : \mc N_T \to \Subd(\mc D)$. Let $A \in \mc N_T$ with standard form $U \times V \times k = (p,S,a) \times (q,S,b) \times k$. Following the construction of $\gamma_T(U)$, let $U'$, $U''$ be as in \eqref{eq:A'A''}. Let $V_F := (q,F,b)$, where $F$ is as in \eqref{eq:A'A''}. Then $U' \times V \times k$, $U'' \times V_F \times k \in \mc N$. We define
\[
\nu_T = \triv_{\mc D}(U' \times V \times k) \cup \triv_{\mc D}(U'' \times V_F \times k)
\]

Then $\nu_T(A)$ is a $\mc D$-subdivision with 2-support $\Cay(U) \times \Cay(V) = \Cay(A)$. Hence, $\nu_T : \mc N_T \to \Subd(\mc D)$ is a subdivision over $2 \mc P$.

From Proposition~\ref{prop:LgammaT}, we have the following.

\begin{prop} \label{prop:LnuT}
Let $A \in \mc N_T$ and $B \in \Max \nu_T(A)$. Then $L(A) = L(B)$.
\end{prop}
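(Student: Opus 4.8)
The plan is to reduce the statement to Proposition~\ref{prop:LgammaT} applied to the first $\mc C$-component of $A$, exactly as Proposition~\ref{prop:LmuT} is deduced for $\mu_T$.

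First I would unwind the construction of $\nu_T$. Write the standard form of $A$ as $U\times V\times k=(p,S,a)\times(q,S,b)\times k$, where $T=S_j$ with $j\le k$ and $i$ is the smallest index with $a_{ij}=\max_{i'}a_{i'j}$, as in the construction of $\gamma_T$, so that $U=(p,S,a)\in\mc C_T$. By the definition of $L$ on $\mc D$ we have $L(A)=L(U)$, and since
\[
\nu_T(A)=\triv_{\mc D}(U'\times V\times k)\cup\triv_{\mc D}(U''\times V_F\times k),
\]
every $B\in\Max\nu_T(A)$ equals $U'\times V\times k$ or $U''\times V_F\times k$, where $U'$, $U''$ are the elements $A'$, $A''$ of \eqref{eq:A'A''} for $\gamma_T$ on $U$; hence $L(B)\in\{L(U'),L(U'')\}$, again by the definition of $L$ on $\mc D$.

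The point that requires care is that the $\mc C$-component of $B$ is always a \emph{maximal} element of $\gamma_T(U)$, so that Proposition~\ref{prop:LgammaT} applies to it. In the non-degenerate case $\Max\gamma_T(U)=\{U',U''\}$, and correspondingly $\Max\nu_T(A)=\{U'\times V\times k,\,U''\times V_F\times k\}$, so there is nothing to check. In the degenerate case $a_{ij}=1$ and $a_{i'j}=0$ for all $i'\neq i$, so $\gamma_T(U)$ has $U''$ as its unique maximal element; here condition~(b) in the definition of $\mc N_T$ forces $b_{1j}<1$, i.e.\ $b_{1j}=0$, so the $j$-th column of $b$ vanishes and $V\sim V_F$. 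It then follows from $U'\le_{\mc C}U''$ that $U'\times V\times k\le_{\mc D}U''\times V_F\times k$, so $U'\times V\times k\notin\Max\nu_T(A)$ and again the $\mc C$-component of $B$ is maximal in $\gamma_T(U)$.

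Once this is settled, Proposition~\ref{prop:LgammaT} applied to $U$ gives $L(U')=L(U)$ whenever $U'\in\Max\gamma_T(U)$, and $L(U'')=L(U)$ in all cases, so $L(B)=L(U)=L(A)$ as desired. I do not expect any real obstacle beyond the bookkeeping in the degenerate case just described: all of the geometric content has already been supplied by Proposition~\ref{prop:LgammaT}, and the remaining facts used — that $L$ on $\mc D$ reads off the first $\mc C$-component and is unaffected by passing to standard form — are immediate from the definitions in Section~\ref{sec:posetD}.
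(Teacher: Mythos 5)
Your proposal is correct and follows the same route as the paper, which likewise deduces Proposition~\ref{prop:LnuT} directly from Proposition~\ref{prop:LgammaT} applied to the first $\mc C$-component. Your extra verification of the degenerate case $a_{ij}=1$, $a_{i'j}=0$ (where $b_{1j}=0$, $V\sim V_F$, and $U'\times V\times k$ fails to be maximal) is precisely the bookkeeping the paper leaves implicit, and it is handled correctly.
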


From the proof of Proposition~\ref{prop:facegammaT}, we have the following.

\begin{prop} \label{prop:facenuT}
Let $A \in \mc N_T$ and suppose $\mc B \in \mc D$ such that $B \le_{\mc D} A$. Then either $\nu_T(A|B) = \nu_{T'}(B)$ for some $T'$ or $\nu_T(A|B) = \triv_{\mc D}(B)$.
\end{prop}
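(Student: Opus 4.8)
Since the excerpt announces that this follows ``from the proof of Proposition~\ref{prop:facegammaT}'', the plan is to transcribe that argument, using the fact that $\nu_T$ is obtained by applying the $\gamma_T$ construction to the first coordinate $U$ of an element $U\times V\times k\in\mc N_T$ while carrying $V$ and $k$ along passively; on the second maximal cell the entry $T$ of the second coordinate is merely replaced by the facet $f$ of $T$ opposite its first vertex, which does not affect the face structure. So essentially the whole proof takes place on the $U$-coordinate, where it is literally the $\gamma_T$ situation.

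First I would unwind the hypothesis. Because $A\in\mc N$, any $B\le_{\mc D}A$ in fact satisfies $B\le_{\mc N}A$: if $B\le_{\mc M}A$ then $A\in\mc M\cap\mc N$, and on that ideal the two orders coincide. Hence, writing $(p,S,a)\times(q,S,b)\times k$ for the standard form of $A$, the element $B$ has a representative $(p_I,S',a_{I\times\bullet})\times(q,S',b)\times k$ with $\emptyset\neq I\subseteq[\abs p]$ and $S'\le S$. Let $T=S_j$ with $j\le k$, and let $i_0$ be the least index with $a_{i_0j}=\max_{i'}a_{i'j}$; since $A\in\mc N_T$ there is some $i$ with $a_{ij}>b_{1j}$, hence $a_{i_0j}>b_{1j}$. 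Then I would make the same case split as for $\gamma_T$. If $i_0\in I$ and $S'_j$ contains both the first vertex $v$ of $T$ and some other vertex, the relevant face of $\Cay(U)$ is the Cayley sum of the first coordinate of $B$, the pieces $U',U'',V,V_F$ defining $\nu_T(A)$ restrict to the pieces defining the $\nu_{S'_j}$ construction applied to $B$, and one checks $B\in\mc N_{S'_j}$: condition (a) because $S'_j$ is still an entry of $S'_{[k']}$ after reduction to standard form, and condition (b) because $(a_{I\times\bullet})_{i_0j}=a_{i_0j}>b_{1j}$. Hence $\nu_T(A|B)=\nu_{S'_j}(B)$. In every remaining case — $i_0\notin I$, or $S'_j$ misses $v$, or $S'_j=\{v\}$, or $S'_j$ is a point — the cell $B$ already appears as a face (in $\mc D$) of one of the two maximal cells $U'\times V\times k$, $U''\times V_F\times k$ of $\nu_T(A)$, so $\nu_T(A|B)=\triv_{\mc D}(B)$.

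The only delicate part, exactly as in Proposition~\ref{prop:facegammaT}, is the standard-form bookkeeping: reducing $(p_I,S',a_{I\times\bullet})\times(q,S',b)\times k$ to standard form may collapse zero columns or point entries with index $\le k$, lowering $k$ to some $k'$, and one must check that $S'_j$ still lies among the first $k'$ entries (it does, since column $j$ is nonzero as $a_{i_0j}>0$, and at most the $k-j$ columns strictly between $j$ and $k$ are lost from the range) and that the image of $i_0$ remains the least index attaining the column maximum (it does, since the maximum over $I$ equals $a_{i_0j}$ and $i_0$ is the smallest such index overall). Since $V$ and $k$ ride along without interacting with these reductions, nothing genuinely new occurs beyond the $\gamma_T$ case, so this step is routine rather than an actual obstacle; the proposition is essentially immediate once the reduction to the $U$-coordinate is made precise.
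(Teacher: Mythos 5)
Your proposal is correct and follows essentially the same route as the paper, which simply invokes the case analysis from the proof of Proposition~\ref{prop:facegammaT}: if the distinguished row index lies in $I$ and $S'_j$ contains the first vertex of $T$ plus another vertex, the restriction is $\nu_{S'_j}(B)$, and otherwise it is $\triv_{\mc D}(B)$. Your additional checks (that $B \le_{\mc N} A$ in all cases, that $B \in \mc N_{S'_j}$, and the standard-form bookkeeping) are exactly the routine verifications the paper leaves implicit.
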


\subsubsection{$\epsilon$ subdivisions} \label{sec:epsilon}

Finally, let $\mc E$ be the set of elements $A \in \mc D$ whose standard form $(p,S,a) \times (q,S,b) \times k$ satisfies the following: $\abs{p} > 1$, and one of the following hold:
\begin{enumerate}[label=(\roman*)]
\item $A \in \mc M$, and $k = 0$.
\item $A \in \mc N$, and for all $j \le k$ we have $a_{ij} = b_{ij}$ for all $i$.
\end{enumerate}
Note that in either case, all the rows of $a$ are equal to each other and all the rows of $b$ are equal to each other.

We construct a subdivision $\epsilon : \mc E \to \mc D$ as follows. Let $A \in \mc E$ with standard form $(p,S,a) \times (q,S,b) \times k$. Note that the entries of $p$ are affinely independent, because by definition of $\mc C_0$ the polytopes $p_i + \sum_{j=1}^n a_{ij}S_j$ are in Cayley position and $\sum_{j=1}^n a_{ij}S_j$ is constant over all $i$. Let $T$ be the ordered simplex with vertices given by the entries of $p$, in that order.

Given a tuple $c$ and an object $d$, let $c \cup d$ denote the tuple obtained by concatenating $d$ to the end of $c$. Let $a_1$ denote the first row of $a$ and define $b_1$ similarly. If $A \in \mc M$, we define
\[
A^T := ( (0), S \cup T, a_1 \cup 1 ) \times ((q_1-p_1), S \cup T, b_1 \cup 1) \times k \in \mc D.
\]
If $A \in \mc N$, we define
\begin{equation}
A^T := ( (0), S \cup T, a_1 \cup 1 ) \times (q, S \cup T, b \cup 0) \times k \in \mc D.
\end{equation}
Either way, we have $\Cay(A^T) = \Cay(A)$. We define $\epsilon(A) = \triv_{\mc D}(A^T)$. We have that $\epsilon : \mc E \to \mc D$ is a subdivision over $2 \mc P$. 

We have the following two properties of $\epsilon$.

\begin{prop} \label{prop:Lepsilon}
Let $A \in \mc E$ and $B \in \Max \epsilon(A)$. Then $L(A) = L(B)$.
\end{prop}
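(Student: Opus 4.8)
The plan is to unwind the definitions and reduce the claim to the lattice decomposition $L(A') = L(S_0(A')) \oplus L(S_1) \oplus \dots \oplus L(S_{\abs{S}})$ valid for any $A' \in \mc C$ in standard form $(p,S,a)$. Since $\epsilon(A) = \triv_{\mc D}(A^T)$, its unique maximal element is $A^T$, so $B = A^T$ and it suffices to prove $L(A) = L(A^T)$. By the definition of $L$ on $\mc D$, $L(A)$ is the $\mc C$-lattice of $(p,S,a)$, while $L(A^T)$ is the $\mc C$-lattice of the first $\mc C_0$-factor of $A^T$, which in both the $\mc M$-case and the $\mc N$-case is $((0),\, S \cup T,\, a_1 \cup 1)$, where $a_1$ is the first (hence, by hypothesis, any) row of $a$ and $T$ is the simplex on the entries of $p$.

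First I would check that $((0),\, S \cup T,\, a_1 \cup 1)$ is already in standard form, so the decomposition formula applies verbatim: because $(p,S,a)$ is in standard form no column of $a$ vanishes, and because $A \in \mc E$ forces all rows of $a$ to coincide, every entry of $a_1$ is nonzero; the appended last entry is $1$, so no column of $a_1 \cup 1$ is zero. Its defining tuple of points, $(0)$, has length $1$, so the face $S_0$ of the corresponding Cayley polytope is a single point and contributes the zero lattice. Hence
\[
L(A^T) = L(S_1) \oplus \dots \oplus L(S_n) \oplus L(T), \qquad n := \abs{S}.
\]

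On the other side, write $v_j$ for the first vertex of $S_j$ and $w := \sum_{j=1}^n a_{1j} v_j \in \bb Z^d$. Since all rows of $a$ are equal, $\sum_j a_{ij} v_j = w$ for every $i$, so $S_0(A) = \conv(p_i + w)_{i} = T + w$ is a lattice translate of $T$, and thus $L(S_0(A)) = L(T)$. Therefore $L(A) = L(S_0(A)) \oplus L(S_1) \oplus \dots \oplus L(S_n) = L(T) \oplus L(S_1) \oplus \dots \oplus L(S_n) = L(A^T)$, as required. The only point needing care, rather than a genuine obstacle, is this bookkeeping: verifying the standard-form condition for the first factor of $A^T$, and recalling that the independence of $S_1,\dots,S_n,T$ — needed to make the direct sums legitimate — is already guaranteed by $A^T$ being a well-formed element of $\mc D$. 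The crucial input is the observation recorded just after the definition of $\mc E$ that for $A \in \mc E$ all rows of $a$ (and of $b$) are equal; this is precisely what makes $S_0(A)$ a translate of $T$ and what makes the standard-form check go through.
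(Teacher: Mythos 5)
Your proof is correct and follows the same route as the paper, whose entire argument is the observation that $T$ is a translate of $S_0(p,S,a)$; you have simply spelled out the surrounding bookkeeping (that $B = A^T$, that $((0), S\cup T, a_1\cup 1)$ is in standard form, and the resulting direct-sum decompositions of $L(A)$ and $L(A^T)$). No gaps.
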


\begin{proof}
This follows from the observation that $T$ is a translation of $S_0(p,S,a)$.
\end{proof}

From the proof of Proposition~\ref{prop:facegammaT}, we have the following.

\begin{prop} \label{prop:faceepsilon}
Let $A \in \mc E$ and suppose $\mc B \in \mc D$ such that $B \le_{\mc D} A$. Then either $\epsilon(A|B) = \epsilon(B)$ or $\epsilon(A|B) = \triv_{\mc D}(B)$.
\end{prop}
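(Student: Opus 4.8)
The plan is to unwind the restriction $\epsilon(A|B)$ directly from the identity $\epsilon(A) = \triv_{\mc D}(A^T) = \langle A^T\rangle_{\mc D}$. Since $\mc D$ is a perfect $\mc P$-poset, the subcomplexes of $\langle A^T\rangle_{\mc D}$ whose support is a face of $\Cay(A^T)$ are exactly the complexes $\triv_{\mc D}(C)$ with $C \le_{\mc D} A^T$, and by Proposition~\ref{prop:nsupporttrivial} such a complex has $2$-support $\Cay(C)$. Hence $\epsilon(A|B)$ will equal $\triv_{\mc D}(C)$, where $C$ is the unique face of $A^T$ in $\mc D$ with $\Cay(C) = \Cay(B)$, and the whole proof reduces to identifying $C$ and comparing it with $B$.

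To identify $C$, I would write the standard form of $A$ as $(p,S,a)\times(q,S,b)\times k$, so that all rows of $a$ agree, all rows of $b$ agree, and $p_i - q_i$ is constant over $i$. As in the proof of Proposition~\ref{prop:Lepsilon}, the simplex $T = \conv(p_1,\dots,p_{\abs p})$ is a translate of $S_0(p,S,a)$, hence independent from $S_1,\dots,S_{\abs S}$, so that $A^T$ is a genuine element of $\mc D$. Since the point tuple of $A^T$ has a single entry, the faces of $A^T$ in $\mc D$ are indexed by the tuples $S''\le S\cup T$; writing such a tuple as $(S'_1,\dots,S'_{\abs S},T')$, every face $T'$ of the simplex $T$ is of the form $\conv(p_i : i\in I)$ for a nonempty $I\subseteq[\abs p]$, and the compatibility condition $S''\le S\cup T$ translates --- via the description of faces of Cayley sums in Section~\ref{sec:Cayley} --- into the statement that $(p_I,S',a_{I\times\bullet})$ together with the matching second factor is a face $B'\le_{\mc D}A$, where $S':=(S'_1,\dots,S'_{\abs S})$. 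Matching $\Cay(C)=\Cay(B)$ then pins this face down to $B$ itself, and one reads off that $C$ is obtained from $A^T$ by replacing each $S_j$ with $S'_j$ and $T$ with $T'=\conv(p_i:i\in I)$.

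With $C$ in hand, the proof splits into two cases exactly as in the proof of Proposition~\ref{prop:facegammaT}. If $\abs I>1$, then $T'$ has positive dimension, the condition defining $\mc E$ (that is, $k=0$ when $A\in\mc M$, or $a_{ij}=b_{ij}$ for $j\le k$ when $A\in\mc N$) is inherited by $B$, so $B\in\mc E$ and $T_B=\conv(p_i:i\in I)=T'$; comparing $C$ with the definition of $B^{T_B}$ --- using the constancy of the rows of $a$ and $b$ and of $p_i-q_i$ so that the appended vertex and the point translates in both factors line up --- yields $C=B^{T_B}$, hence $\epsilon(A|B)=\triv_{\mc D}(B^{T_B})=\epsilon(B)$. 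If instead $\abs I=1$, then $T'$ is the single point equal to the unique entry of $p_I$, and applying the defining equivalence relation of $\mc M_0$ (resp.\ $\mc N_0$) to absorb this point column collapses $C$ to $B$; here the $\mc M$/$\mc N$ split matters, since in the $\mc N$ case the appended second-factor coefficient is $0$ rather than $1$. Thus $\epsilon(A|B)=\triv_{\mc D}(B)$ in this case.

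I expect the main obstacle to be purely bookkeeping: tracking how the equivalence relation absorbs the freshly appended $T$-column together with any point columns already present in $S'$, and checking that after these collapses the first and second factors of $C$ become literally those of $B$ (respectively $B^{T_B}$), including the $\abs p=\abs q$ constraint in the $\mc M$ case and the $\abs q=1$ constraint in the $\mc N$ case. The only non-formal ingredient is the observation that $T$ is a translate of $S_0(p,S,a)$, which makes every construction above well defined; beyond that, the argument runs parallel to the proof of Proposition~\ref{prop:facegammaT}.
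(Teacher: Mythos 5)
Your proposal is correct and follows essentially the same route as the paper: writing $B$ as $(p_I,S',a_{I\times\bullet})\times\cdots\times k$ and splitting on $\abs{I}>1$ (where $B\in\mc E$ and the restriction is $\epsilon(B)=\triv_{\mc D}(B^{T_B})$) versus $\abs{I}=1$ (where the appended point column is absorbed and the restriction is $\triv_{\mc D}(B)$). The paper's proof is just a terser statement of this same case analysis, so your extra bookkeeping with the face $C$ of $A^T$ is simply a fleshed-out version of what the paper leaves implicit.
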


\begin{proof}
Let the standard form of $A$ be $(p,S,a) \times (q,S,b) \times k$ and let $B = U \times V \times W$, where $U = (p_I,S',a_{I \times \bullet})$ for some nonempty $I \subseteq [\abs{p}]$ and $S' \le S$. If $\abs{I} > 1$, then $B \in \mc E$ and $\epsilon(A|B) = \epsilon(B)$. Otherwise, $\epsilon(A|B) = \triv_{\mc D}(B)$.
\end{proof}

\subsection{$\tau_x$, $\sigma_x$, and $\rho_x$ subdivisions}

The final subdivisions we will construct are analogues of the $\stell_x$ and $\kappa_{T,x}$ subdivisions from earlier. These will allow us to lower indices of lattices. The order they are presented here is ``backwards'', in the sense that in practice, one would apply $\rho_x$ subdivisions first, then $\sigma_x$ subdivisions, then $\tau_x$.

Throughout this section, we fix $L$ a $d$-dimensional lattice in $\bb R^d$ and some nonzero $x \in \bb Z^d / L$.

\subsubsection{$\tau_x$ subdivisions}

Let $\mc T_x$ be the set of elements of $\mc D$ whose standard form $(p,S,a) \times (q,S,b) \times k$ satisfies the following:
\begin{enumerate}[label=(\alph*)]
\item $x$ is a box point of $S_{[k]}$.
\item $\abs{p} = 1$.
\item For all $j \le k$, $a_{1j} = b_{1j} = d!$.
\end{enumerate}

We construct a subdivision $\tau_x : \mc T_x \to \Subd(\mc D)$ as follows. Let $A \in \mc T_x$ with standard form $(p,S,a) \times (q,S,b) \times k$. Let $n = \abs{S}$. Let $c$ be the $n$-tuple of integers where
\[
c_j = \begin{dcases*}
c(S_{[k]},x)_j & for $1 \le j \le k$ \\
0 & for $k+1 \le j \le n$
\end{dcases*}
\]

Now, let $x_0$ be the focus of $(S,x)$, and let $\mf F = \mf F(S,x)$.
Let $N = d!/\max_j c_j$. For all $F \in \mf F$ and $r = 1$, \dots, $N$, define
\begin{align*}
U_{F,r} &:= \left( \left(p_1+rx_0,p_1+(r-1)x_0 \right),F,\right. \\
& \qquad\qquad\qquad\qquad\qquad\qquad\qquad\quad \left.
\begin{pmatrix} a_{11}-rc_1 & \cdots & a_{1n}-rc_n \\ a_{11}-(r-1)c_1 & \cdots & a_{1n}-(r-1)c_n \end{pmatrix} \right) \\
V_{F,r} &:= \left( \left(q_1+rx_0,q_1+(r-1)x_0 \right),F, \right. \\
& \qquad\qquad\qquad\qquad\qquad\qquad\qquad\quad \left. \begin{pmatrix} b_{11}-rc_1 & \cdots & b_{1n}-rc_n \\ b_{11}-(r-1)c_1 & \cdots & b_{1n}-(r-1)c_n \end{pmatrix} \right).
\end{align*}

We have that $U_{F,r} \times V_{F,r} \times k \in \mc M$.
From Section~\ref{sec:structboxpoints}, the set of all $U_{F,r} \times V_{F,r} \times k$ over $F \in \mf F$ and $r \in [N+1]$ is the set of maximal elements of a $\mc D$-subdivision with 2-support $\Cay(p,S,a) \times \Cay(q,S,b) = \Cay(A)$. We define $\tau_x(A)$ to be this $\mc D$-subdivision.

Hence, we have constructed a subdivision $\tau_x :\mc T_x \to \Subd(\mc D)$ over $2 \mc P$. From Section~\ref{sec:structboxpoints}, we have the following:

\begin{prop} \label{prop:Ltaux}
Let $A \in \mc T_x$ and $B \in \Max \tau_x(A)$. Then $L(B) < L(A)$.
\end{prop}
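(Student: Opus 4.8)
The plan is to deduce Proposition~\ref{prop:Ltaux} from the index computation already carried out in Section~\ref{sec:structboxpoints}. The key point is that $L(\cdot)$ on $\mc D$ depends only on the first Cayley factor, that this factor is invariant under translation and under the moves defining $\sim$, and that the first factors $U_{F,r}$ of the maximal elements $U_{F,r}\times V_{F,r}\times k$ of $\tau_x(A)$ are, up to a translation by $p_1$, precisely the maximal elements of the $\mc C$-subdivision of the dilated polysimplex studied in Section~\ref{sec:structboxpoints}.

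Concretely, let $A\in\mc T_x$ have standard form $(p,S,a)\times(q,S,b)\times k$ with $n:=\abs S$, and let $c$, the focus $x_0$ of $(S,x)$, the set $\mf F=\mf F(S,x)$, and $N=d!/\max_j c_j$ be as in the construction of $\tau_x$. First I would record that, since $\abs p=1$, the face $S_0(p,S,a)$ is a single point, so $L(A)=L(p,S,a)=L(S_1)\oplus\dots\oplus L(S_n)$ and hence $\ind L(A)=\ind(L(S_1)+\dots+L(S_n))$. Next I would verify the hypotheses needed to invoke Section~\ref{sec:structboxpoints} for $P:=\sum_j a_{1j}S_j$: one has $a_{1j}=d!\ge d\ge c_j$ for $j\le k$ and $a_{1j}=1\ge 0=c_j$ for $j>k$, so $a_{1j}\ge c_j$ everywhere; moreover $\max_j c_j$ is a positive integer at most $d$, hence divides $d!$, so $N$ is a positive integer with $a_{1j}\ge Nc_j$ for all $j$ and $a_{1j}=Nc_j$ for any $j$ attaining the maximum (which lies in $\supp c$). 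Thus Section~\ref{sec:structboxpoints} produces a $\mc C$-subdivision of $P$ whose maximal elements are, up to translation by $p_1$ and reduction to standard form, exactly the $U_{F,r}$ for $F\in\mf F$ and $1\le r\le N$, and it asserts that each such maximal element $U$ satisfies $\ind L(U)<\ind(L(S_1)+\dots+L(S_n))$.

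Finally, since any $B\in\Max\tau_x(A)$ equals $U_{F,r}\times V_{F,r}\times k$ for some such $F$ and $r$, and $L(B)=L(U_{F,r})$ by the definition of $L$ on $\mc D$, I obtain $\ind L(B)=\ind L(U_{F,r})<\ind(L(S_1)+\dots+L(S_n))=\ind L(A)$, i.e.\ $L(B)<L(A)$. I do not expect any genuine obstacle here: all the geometric content — the strict inequality between the lattice distance from $x_0$ to a facet of $P$ and the corresponding lattice height — was established in Section~\ref{sec:structboxpoints}, and what remains is bookkeeping, namely that $\abs p=1$ kills the $S_0$-summand of $L(A)$, that the divisibility and inequalities license the choice of $N$, and that $L$ is unchanged under translation and under $\sim$.
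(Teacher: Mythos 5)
Your proof is correct and follows the same route as the paper, whose own justification for this proposition is simply the index computation of Section~\ref{sec:structboxpoints}: since $\abs{p}=1$ the $S_0$-summand of $L(A)$ vanishes, the maximal elements of $\tau_x(A)$ have first Cayley factors that are translates of the elements analyzed there, and the strict inequality between the lattice distance from the focus $x_0$ to a facet in $\mf F$ and the corresponding lattice height gives $\ind L(B)<\ind L(A)$. Your additional bookkeeping (divisibility of $d!$ by $\max_j c_j$, the verification that $a_{1j}\ge Nc_j$ with equality at a $j\in\supp c$, and invariance of $L$ under translation) is exactly what the paper leaves implicit.
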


We also have the following.

\begin{prop} \label{prop:facetaux}
Let $A \in \mc T_x$ and suppose $\mc B \in \mc D$ such that $B \le_{\mc D} A$. Then either $\tau_x(A|B) = \tau_x(B)$ or $\tau_x(A|B) = \triv_{\mc D}(B)$.
\end{prop}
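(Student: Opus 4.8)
The plan is to mirror the proofs of Propositions~\ref{prop:facegammaT} and \ref{prop:facekappaTx}: write $B$ in terms of the standard form of $A$, pass to the standard form of $B$, and split into cases according to whether $B$ lands back in $\mc T_x$. Since $A \in \mc T_x$ has $\abs{p} = 1$, every $B \le_{\mc D} A$ is represented by $(p_1,S',a) \times (q_1,S',b) \times k$ for some $S' \le S$; absorbing the point columns of $S'$ yields the standard form $(p_1,S'',a'') \times (q_1,S'',b'') \times k'$, where $k'$ counts how many of the first $k$ coordinates survive. I would first observe that restriction only deletes columns and never alters entries, so $a''_{1j} = b''_{1j} = d!$ for all $j \le k'$, and $\abs{p} = 1$ is preserved; hence the only condition in the definition of $\mc T_x$ that can fail for $B$ is whether $x$ is still a box point of $S''_{[k']}$.

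In the case where $x$ is a box point of $S''_{[k']}$, so that $B \in \mc T_x$ and $\tau_x(B)$ is defined, I would show $\tau_x(A|B) = \tau_x(B)$. The ingredients are: $c(S''_{[k']},x)$ equals the restriction of $c(S_{[k]},x)$ to the surviving coordinates (from Section~\ref{sec:boxpoints}), so $\max_j c_j$ and $N$ are unchanged; the focus $x_0$ of $(S,x)$ is also the focus of $(S'',x)$, since it already lies in the relevant subpolytope; and restricting the facet-set $\mf F(S,x)$ to the face $p_1 + \sum_j a_{1j}S'_j$ of $p_1 + \sum_j a_{1j}S_j$, together with the facets newly created by the cut, reproduces exactly $\mf F(S'',x)$. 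With these in hand, each maximal cell $U_{F,r}\times V_{F,r}\times k$ of $\tau_x(A)$ lying on the face $\Cay(B)$ restricts to a maximal cell of $\tau_x(B)$ and conversely, which, since both $\tau_x(A|B)$ and $\tau_x(B)$ are $\mc D$-subdivisions of $\Cay(B)$, forces $\tau_x(A|B) = \tau_x(B)$.

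In the remaining case, where $x$ is not a box point of $S''_{[k']}$, I would argue that $\Cay(B)$ lies inside a single maximal cell of $\tau_x(A)$, so $\tau_x(A|B) = \triv_{\mc D}(B)$. Geometrically, the cells of $\tau_x(A)$ form concentric ``shells'' around the translated focus $p_1 + x_0$ (and $q_1 + x_0$ in the second coordinate); for the face $\Cay(B)$ to meet more than one of these shells, its underlying faces would have to reach from a translate of the focus out to an outer facet, and this radial structure survives on $\Cay(B)$ precisely when $x$ remains a box point of the restricted tuple $S''_{[k']}$. Since it does not, $\Cay(B)$ is a face of one maximal cell of $\tau_x(A)$ in each coordinate, giving $\tau_x(A|B) = \triv_{\mc D}(B)$. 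I expect the main obstacle to be making this last dichotomy precise, i.e.\ proving that ``$\Cay(B)$ meets several shells'' is equivalent to ``$x$ is a box point of $S''_{[k']}$'', together with the bookkeeping of which facets of $S'$ are inherited from $\mf F(S,x)$ versus created by the cut; this is essentially the content of Section~\ref{sec:structboxpoints} applied to the face, and is where the argument is genuinely geometric rather than formal.
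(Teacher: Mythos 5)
Your proposal is correct and follows essentially the same route as the paper: the paper's proof is exactly the dichotomy you identify, asserting that $\tau_x(A|B) = \tau_x(B)$ when $x$ is a box point of $B$ and $\tau_x(A|B) = \triv_{\mc D}(B)$ otherwise. Your supporting observations (the matrix entries and $\abs{p}=1$ persist under restriction, $c$, the focus $x_0$, and $N$ are unchanged on a face that still has $x$ as a box point, and the shell structure restricts trivially otherwise) are the details the paper leaves implicit, so there is nothing methodologically different to flag.
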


\begin{proof}
If $x$ is a box point of $B$, then $\tau_x(A|B) = \tau_x(B)$. Otherwise, $\tau_x(A|B) = \triv_{\mc D}(B)$.
\end{proof}

\subsubsection{$\sigma_x$ subdivisions}

Let $\mc S_x$ be the set of elements of $\mc D$ whose standard form $(p,S,a) \times (q,S,b) \times k$ satisfies the following:
\begin{enumerate}[label=(\alph*)]
\item $x$ is a box point of $S_{[k]}$.
\item $(p,S,a) \times (q,S,b) \times k \in \mc N$.
\item For all $j \le k$, $b_{1j} = 0$ or $d!$.
\item For all $i$, either $a_{ij} = b_{1j}$ for all $j \le k$ or $a_{ij} = b_{1j} + c(S_{[k]},x)_j$ for all $j \le k$.
\item There is some $i$ such that $a_{ij} = b_{1j} + c(S_{[k]},x)_j$ for all $j \le k$.
\end{enumerate}

We construct a subdivision $\sigma_x : \mc S_x \to \Subd(\mc D)$ as follows. Let $A \in \mc S_x$ with standard form $U \times V \times k = (p,S,a) \times (q,S,b) \times k$. Let $m = \abs{p}$ and $n = \abs{S}$. Let $c$ be the $n$-tuple of integers where $c_j = c(S_{[k]},x)_j$ for $j \le k$ and $c_j = 0$ for $j > k$, as before.
Let $x_0$ be the focus of $(S,x)$, and let $\mf F = \mf F(S,x)$.

Let $i$ be the smallest number such that $a_{ij} = b_{1j} + c_j$ for all $j \le k$. For all $F \in \mf F$, we define
\[
A^{\sharp,F} := U^{\sharp,F} \times V_F \times k \in \mc N
\]
where $V_F = (q,F,b)$, and $U^{\sharp,F} = (p^{\sharp},F,a^{\sharp})$ is defined analogously to $A^\sharp$ in \eqref{eq:sharpflat}; in other words,
\begin{itemize}
\item $p^{\sharp}$ is the $(m+1)$-tuple obtained by inserting $p_1 + x_0$ directly before the $i$th entry of $p$.
\item $a^{\sharp}$ is the $(m+1) \times n$ matrix obtained by inserting the row $(a_{ij}-c_j)_{j=1}^n$ directly above the $i$th row of $a$.
\end{itemize}
In addition, we define
\[
A^\star := (p''', S, a''') \times V \times k \in \mc N
\]
where $p'''$ and $a'''$ are defined right before \eqref{eq:sharpflat}; that is,
\begin{itemize}
\item $p'''$ is the $m$-tuple obtained by replacing the $i$th entry of $p$ with $p_1 + x_0$.
\item $a'''$ is the $m \times n$ matrix obtained by replacing the $i$th row with $(a_{ij}-c_j)_{j=1}^n$.
\end{itemize}
From the discussion in Section~\ref{sec:structboxpoints}, $\{A^\star\} \cup \{A^{\sharp,F} : F \in \mf F\}$ is the set of maximal elements of a $\mc D$-subdivision with 2-support $\Cay(U) \times \Cay(V) = \Cay(A)$. We define $\sigma_x(A)$ to be this $\mc D$-subdivision.

Hence, we have constructed a subdivision $\sigma_x :\mc S_x \to \Subd(\mc D)$ over $2 \mc P$. Using similar arguments as the previous section, we have the following:

\begin{prop} \label{prop:Lsigmax}
Let $A \in \mc S_x$ and $B \in \Max \sigma_x(A)$. Then $L(B) < L(A)$.
\end{prop}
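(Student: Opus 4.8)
The statement ``$L(B)<L(A)$'' is shorthand for $\ind(L(B))<\ind(L(A))$, just as in Proposition~\ref{prop:Ltaux}. The plan is to check this for each of the two kinds of maximal element of $\sigma_x(A)$ --- namely $B=A^{\sharp,F}$ for $F\in\mf F:=\mf F(S,x)$, and $B=A^\star$ --- by matching their first $\mc C_0$-components against the index-lowering computations of Section~\ref{sec:structboxpoints}, which were already used in the proof of Proposition~\ref{prop:kappaTx}. For $C\in\mc D$ the lattice $L(C)$ is by definition $L(p,S,a)$, where $(p,S,a)$ is the first factor of the standard form of $C$, so the second factors $V$, $V_F$ and the integer $k$ are irrelevant and I would work only with the first components $(p^\sharp,F,a^\sharp)$ of $A^{\sharp,F}$ and $(p''',S,a''')$ of $A^\star$. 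Write $c$ for the $n$-tuple with $c_j=c(S_{[k]},x)_j$ for $j\le k$ and $c_j=0$ otherwise, let $x_0$ be the focus of $(S,x)$, and let $i$ be as in the definition of $\sigma_x$. Conditions (d) and (e) defining $\mc S_x$ force $a_{ij}=b_{1j}+c_j$ for $j\le k$ (and $a_{ij}=1=a_{1j}$, $c_j=0$ for $j>k$), which is exactly what makes the focus-based shrink-and-translate of Section~\ref{sec:structboxpoints} applicable to the $i$-th Cayley summand $P_i=p_i+\sum_j a_{ij}S_j$ of $\Cay(p,S,a)$.

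For $B=A^{\sharp,F}$ I would argue exactly as in the $A^\sharp$ case of Proposition~\ref{prop:kappaTx}. The first component $(p^\sharp,F,a^\sharp)$ restricts the fibre to the facet $\sum_j F_j$ of $\sum_j S_j$ and inserts the inner Cayley summand $(p_1+x_0)+\sum_j(a_{ij}-c_j)F_j$, so it is a shell piece of the concentric subdivision of Section~\ref{sec:structboxpoints} indexed by $F$. Let $h_F$ be the lattice height of $\sum_j S_j$ over $\sum_j F_j$ and $h_F'$ the lattice distance between $x_0$ and that facet; since $F\in\mf F$, the focus $x_0$ is not on the facet, so $h_F'>0$, and $h_F'<h_F$ by Section~\ref{sec:structboxpoints}. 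Decomposing $L(p^\sharp,F,a^\sharp)=L(S_0(p^\sharp,F,a^\sharp))\oplus\bigoplus_j L(F_j)$ and $L(A)=L(S_0(p,S,a))\oplus\bigoplus_j L(S_j)$ as in Proposition~\ref{prop:kappaTx} gives $\ind(L(A^{\sharp,F}))=h_F' e_F$ and $\ind(L(A))=h_F e_F$ for a common cofactor $e_F$ coming from the facet together with the remaining Cayley directions, whence $\ind(L(A^{\sharp,F}))<\ind(L(A))$.

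The remaining case $B=A^\star$ is the one I expect to be the main obstacle. Here $(p''',S,a''')$ replaces $P_i$ by its shrunk translate $(p_1+x_0)+\sum_j(a_{ij}-c_j)S_j$ --- the inner core of the $N=1$ concentric construction of Section~\ref{sec:structboxpoints}, which, unlike the shells, is not among the pieces whose index drop is recorded there, and for which $x$ may well remain a box point, so the ``$x$ is used up'' heuristic is unavailable. The key point is that $S_0(p''',S,a''')$ differs from $S_0(p,S,a)$ only in its $i$-th vertex, while the affine subspace $W$ spanned by the remaining $S_0$-vertices together with all the $V(S_j)$ is unchanged; hence $\ind(L(A^\star))$ and $\ind(L(A))$ differ only by the factor equal to the lattice distance of that $i$-th vertex from $W$. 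Using conditions (d) and (e) one checks that the new $i$-th vertex equals a vertex of $S_0(p,S,a)$ shifted by the focus $x_0$ (when $i=1$ this requires a minor rephrasing), so the desired inequality reduces to the assertion that $x_0$ has lattice depth transverse to $W$ strictly between $0$ and the depth of the original $i$-th vertex. The careful step is to prove this last assertion from the definition of the focus, the Cayley-position hypothesis, and the geometry of $\sigma_x(A)$ --- in particular the nonemptiness of $\mf F$, which forces $x_0$ to sit strictly interior to $\Cay(A)$ in that direction. Granting it, $\ind(L(A^\star))<\ind(L(A))$ follows, and since the $2\mc P$-data is invisible to $L(\cdot)$ these two cases give the proposition.
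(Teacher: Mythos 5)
Your reduction of ``$L(B)<L(A)$'' to an index inequality and your treatment of the shell pieces $B=A^{\sharp,F}$ are fine: that half is exactly the computation the paper is invoking when it points back to Section~\ref{sec:structboxpoints} (it is the same $h_F'<h_F$ argument as in the $A^\sharp$ case of Proposition~\ref{prop:kappaTx}), and since $L(\cdot)$ only sees the first $\mc C$-factor of the standard form, ignoring $V$, $V_F$ and $k$ is legitimate.

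The core case $B=A^\star$, however, is a genuine gap, and not one you can close by ``granting'' the flagged assertion, because that assertion fails for admissible $A\in\mc S_x$. Concretely, take $d=2$, $T=\conv\{(0,0),(1,1),(1,-1)\}$ with first vertex $(0,0)$, $L=L(T)$ (index $2$), $x$ the nonzero class of $\bb Z^2/L$, so $c((T),x)=(1)$ and the focus is $x_0=(1,0)$. Let $A$ have standard form $((p_1),(T),(3))\times((q_1),(T),(2))\times 1$; conditions (a)--(e) hold (here $b_{11}=d!=2$ and $a_{11}=b_{11}+c_1$), so $A\in\mc S_x$ with $\abs{p}=1$. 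Then $A^\star=((p_1+x_0),(T),(2))\times((q_1),(T),(2))\times 1$ is a maximal element of $\sigma_x(A)$ (its first Cayley factor $p_1+x_0+2T$ is the full-dimensional core, not a face of any shell), yet $L(A^\star)=L(T)=L(A)$, so there is no strict index drop. This is precisely where your argument degenerates: with $\abs{p}=1$ (and more generally whenever the $b$-pattern rows are nonzero on $\supp c$, e.g.\ $b_{1j}=d!$) the simplex $S_0$ contributes nothing, there is no ``transverse direction to $W$'' in which $x_0$ could have intermediate lattice depth, and the index is unchanged; note also that $x$ is still a box point of such an $A^\star$, which lands in $\mc T_x$, so in the paper's scheme the drop for the core is supplied only by a subsequent $\tau_x$-move. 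So for $B=A^\star$ the most one can prove along your lines is a dichotomy in the style of Proposition~\ref{prop:Lrhox} (either the box point survives and $L$ is unchanged, or the index drops), not the unconditional strict inequality; your proposal neither proves that weaker statement nor identifies any hypothesis ruling out the situation above, so the second half of the argument does not go through as written.
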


\begin{prop} \label{prop:facesigmax}
Let $A \in \mc S_x$ and suppose $\mc B \in \mc D$ such that $B \le_{\mc D} A$. Then either $\sigma_x(A|B) = \sigma_x(B)$ or $\sigma_x(A|B) = \triv_{\mc D}(B)$.
\end{prop}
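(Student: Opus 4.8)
The plan is to mirror the proof of Proposition~\ref{prop:facegammaT} and to reuse the bookkeeping already established for $\gamma_T$, $\kappa_{T,x}$, $\tau_x$. Let the standard form of $A$ be $U \times V \times k = (p,S,a) \times (q,S,b) \times k$, let $m = |p|$, $n = |S|$, let $c$ be the $n$-tuple with $c_j = c(S_{[k]},x)_j$ for $j \le k$ and $c_j = 0$ for $j > k$, let $x_0$ be the focus of $(S,x)$, and let $i$ be the smallest index with $a_{ij} = b_{1j} + c_j$ for all $j \le k$. Write $B = U_I \times V \times k$ where $U_I = (p_I, S', a_{I\times\bullet})$, $S' \le S$, and $I \subseteq [\,|p|\,]$ is nonempty; recall the face relation in $\mc N$ keeps the second factor $V$ unchanged. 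The maximal elements of $\sigma_x(A)$ are $A^\star$ together with the $A^{\sharp,F}$ for $F \in \mf F(S,x)$, so $\sigma_x(A|B)$ is the subcomplex of these whose $2$-support is $\Cay(U_I) \times \Cay(V)$. The computation then splits on which of these maximal cells survive when we pass to the face indexed by $(I,S')$, exactly as in the $\gamma_T$ and $\kappa_{T,x}$ arguments.

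The key case distinction I would carry out: first, if $i \notin I$ or if $S'$ restricted along the relevant coordinates still ``contains $x_0$'' in the appropriate lattice-height sense — more precisely, if none of the structural ingredients ($x$ remaining a box point of $S'_{[k]}$, the index $i$ surviving, and the inequality $a_{ij} > b_{1j}$ for some $j \le k$ persisting on the face) are available — then the restriction collapses: every maximal cell of $\sigma_x(A)$ meets the face $\Cay(U_I)\times\Cay(V)$ in a single top-dimensional piece, forcing $\sigma_x(A|B) = \triv_{\mc D}(B)$. Second, if $B$ still lies in $\mc S_x$ (with the same box point $x$, the inherited focus, and the recomputed $c$ via the compatibility $c(S',x) = c(S,x)$ from Section~\ref{sec:boxpoints}), then the cells $A^\star$ and $A^{\sharp,F}$ restrict to the analogous cells $B^\star$ and $B^{\sharp,F'}$ built directly from $B$, and the definitions match on the nose, giving $\sigma_x(A|B) = \sigma_x(B)$. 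The main point to verify carefully is that the defining conditions (a)--(e) of $\mc S_x$ are closed under taking $\le_{\mc D}$-faces in the non-collapsing case; conditions (a), (c), (d) pass down immediately because the rows of $a_{I\times\bullet}$ are a sub-multiset of the rows of $a$ and $b$ is unchanged, while (e) is precisely the ``$i \in I$'' hypothesis that characterizes the non-collapsing case, and (b), membership in $\mc N$, is a poset-ideal-type property already noted for $\mc N$.

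The one genuinely delicate point — and what I expect to be the main obstacle — is the geometry of the cells $A^{\sharp,F}$ under restriction: unlike the plain trivial subdivisions that make up $\gamma_T$, here the cell $A^{\sharp,F}$ is the convex hull $\conv(\Cay(p''',F,a'''),\Cay(p^{\sharp\text{-ish}},\dots))$ coming from Section~\ref{sec:structboxpoints}, so I must check that its intersection with a face $\Cay(U_I)\times\Cay(V)$ of $\Cay(A)$ is again a cell of the same shape, i.e.\ that the focus $x_0$ and the facet tuples $F$ behave well under passing to $F$-faces $G \le F$ with $G \le F'$ for some other $F' \in \mf F$ — this is exactly the $\mf G$-bookkeeping from the definition of $\kappa_{T,x}$, and I would import it verbatim. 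Once that compatibility is in hand, the two stated alternatives exhaust all cases, and in both alternatives $\Max\sigma_x(A|B)$ and $\{B\}$ are trivially joinable, which is all that is needed downstream for facial compatibility. I would phrase the final write-up as a short case analysis, citing Section~\ref{sec:structboxpoints} and the proof of Proposition~\ref{prop:facegammaT} for the routine verifications.
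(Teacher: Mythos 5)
Your overall shape (a case analysis mirroring Proposition~\ref{prop:facegammaT}) is right, but your dichotomy is keyed to the wrong condition, and one of your cases asserts a false equality. You claim that whenever $B$ again lies in $\mc S_x$ the restriction equals $\sigma_x(B)$, and that condition (e) for $B$ ``is precisely the $i \in I$ hypothesis.'' These are not the same. The construction of $\sigma_x(A)$ singles out the \emph{smallest} row $i$ of $A$ with $a_{ij}=b_{1j}+c_j$ for all $j\le k$, and the non-collapsing case is governed by whether this particular $i$ lies in $I$ (and $x$ remains a box point of the face), exactly as in the proof of Proposition~\ref{prop:facegammaT}, where the criterion is that the distinguished index survives into $I$. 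Concretely, if $A$ has two qualifying rows $i<i'$ and you take $I\ni i'$ with $i\notin I$ and $S'=S$, then $B\in\mc S_x$ and $\sigma_x(B)$ is a nontrivial subdivision (built at row $i'$), but $B$ is itself a face of $A^\star$ (which modifies only row $i\notin I$), hence $B\in\sigma_x(A)$ and $\sigma_x(A|B)=\triv_{\mc D}(B)\neq\sigma_x(B)$. The disjunction in the statement is still true, but your proof as described identifies the restriction incorrectly in this case. The intended argument (the paper's ``similar arguments as the previous section,'' cf.\ Propositions~\ref{prop:facetaux} and~\ref{prop:facekappaTx}) is the short case split: if $i\in I$ and $x$ is a box point of $B$, then $B\in\mc S_x$ with the same distinguished row (columns with $c_j\neq 0$ survive, so no earlier row becomes qualifying) and $\sigma_x(A|B)=\sigma_x(B)$; otherwise $\sigma_x(A|B)=\triv_{\mc D}(B)$.

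Two smaller inaccuracies: the face relation in $\mc N$ does \emph{not} keep the second factor unchanged --- it replaces $(q,S,b)$ by $(q,S',b)$ --- which matters when you match the cells $V_F$ for $A$ with those for $B$; and the $\mf G$-bookkeeping you propose to import verbatim from $\kappa_{T,x}$ is not relevant here, since $\sigma_x(A)$ has only the maximal cells $A^\star$ and $A^{\sharp,F}$ for $F\in\mf F$ --- the $A^G$ cells occur in $\kappa_{T,x}$ and $\rho_x$, not in $\sigma_x$. The only geometric input needed for the non-collapsing case is the compatibility of the focus and of $\mf F$ with passing to faces, which is already contained in Section~\ref{sec:structboxpoints}.
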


\subsubsection{$\rho_x$ subdivisions.}

Let $\mc R_x$ be the set of elements of $\mc D$ whose standard from $(p,S,a) \times (q,S,b) \times k$ satisfies the following.
\begin{enumerate}[label=(\alph*)]
\item $x$ is a box point of $S$.
\item $(p,S,a) \times (q,S,b) \times k \in \mc N$.
\item For all $j \le k$, either $b_{1j} = 0$ or $b_{1j} = d!$.
\item For all $i$ and all $j \le k$, we have $a_{ij} \ge b_{1j} + c(S,x)_j$.
\item There exists some $i$ and some $j \le k$ satisfying $a_{ij} > b_{1j} + c(S,x)_j$.
\end{enumerate}

We construct a subdivision $\rho_x : \mc R_x \to \Subd(\mc D)$ as follows. Let $A \in \mc R_x$ with standard form $U \times V \times k = (p,S,a) \times (q,S,b) \times k$. Let $j \le k$ be the smallest number such that there exists $i$ satisfying $a_{ij} > b_{1j} + c(S,x)_j$. Let $T = S_j$. Let $f$ be the facet of $T$ opposite the first vertex of $T$, and let $F$ be the tuple obtained from $S$ by replacing $T$ with $f$. We consider two cases, in parallel to Section~\ref{sec:kappaTx}.

\textbf{Case 1:} $x$ is a box point of $F$.

In this case, we define $\rho_x(A) = \nu_T(A)$.

\textbf{Case 2:} $x$ is not a box point of $F$.

Following the construction of $\kappa_{T,x}$, define $U'$ as in \eqref{eq:A'A''}, and define $U^\sharp$, $U^\flat$ as in \eqref{eq:sharpflat}. In addition, define $\mf G$ as in Section~\ref{sec:kappaTx}, and for each $G \in \mf G$ define $U^G$ as in \eqref{eq:AG}. For any $R \le S$, let $V_R := (q,R,b)$. Then the set of elements
\[
\{ U' \times V \times k, U^\sharp \times V_F \times k, U^\flat \times V_F \times k \} \cup \{ U^G \times V_G \times k : G \in \mf G \} \subset \mc N
\]
is the set of maximal elements of a $\mc D$-subdivision with 2-support $\Cay(A)$. We define $\rho_x(A)$ to be this $\mc D$-subdivision. Hence we have defined a subdivision $\rho_x : \mc R_x \to \Subd(\mc D)$ over $2 \mc P$.

From Proposition~\ref{prop:kappaTx}, we have the following.

\begin{prop} \label{prop:Lrhox}
Let $A \in \mc R_x$ and $B \in \Max \rho_x(A)$. We have the following.
\begin{itemize}
\item If $x$ is a box point of $B$, then $L(A) = L(B)$.
\item If $x$ is not a box point of $B$, then $\ind(L(B)) < \ind(L(A))$.
\end{itemize}
\end{prop}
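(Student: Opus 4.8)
The plan is to deduce Proposition~\ref{prop:Lrhox} from Proposition~\ref{prop:kappaTx} (for the interesting case) and Proposition~\ref{prop:LnuT} (for the easy case), using the fact that for an element $A \in \mc D$ with standard form $U \times V \times k = (p,S,a) \times (q,S,b) \times k$, the lattice $L(A)$ equals $L(U)$ by definition, and whether $x$ is a box point of $A$ is controlled by the simplex tuple of $U$. Thus, once we recognize the first $\mc C$-factors of the maximal elements of $\rho_x(A)$ as the maximal elements of a $\kappa$- or $\nu$-subdivision of $U$, the statement transfers directly.

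First I would dispose of Case~1 of the construction of $\rho_x$, where $x$ is a box point of $F$. There $\rho_x(A) = \nu_T(A)$, so Proposition~\ref{prop:LnuT} gives $L(B) = L(A)$ for every $B \in \Max \rho_x(A)$; and since the simplex tuples occurring among the maximal elements are $S$ and $F$, and $x$ is a box point of $F$ (hence of the ambient tuple $S$ as well, box points of a face being box points of the whole, as in Section~\ref{sec:boxpoints}), $x$ is a box point of every such $B$. So the first bullet holds and the second is vacuous.

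Next I would treat Case~2, where $x$ is not a box point of $F$. By construction the maximal elements of $\rho_x(A)$ are $U' \times V \times k$, $U^\sharp \times V_F \times k$, $U^\flat \times V_F \times k$, and $U^G \times V_G \times k$ for $G \in \mf G$, whose first $\mc C$-factors $U'$, $U^\sharp$, $U^\flat$, $U^G$ are exactly the maximal elements of $\kappa_{T,x}(U)$ built in Case~2 of Section~\ref{sec:kappaTx}. One checks from the defining conditions (a)--(e) of $\mc R_x$ that $U \in \mc K_{T,x}$ for $T = S_j$ and that this is Case~2 of that construction: condition (d) gives $a_{ij'} \ge b_{1j'} + c(S,x)_{j'} \ge c(S,x)_{j'}$ for all relevant $i,j'$, and the choice of $j$ together with condition (e) gives $a_{ij} > c(S,x)_j$ for some $i$, which are the substantive requirements, while ``$x$ is not a box point of $F$'' is Case~2 by assumption. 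Proposition~\ref{prop:kappaTx} applied to $U$ then says that each of these first $\mc C$-factors $B_0$ satisfies $L(B_0) = L(U)$ when $x$ is a box point of $B_0$ and $\ind L(B_0) < \ind L(U)$ otherwise. Combining with $L(B) = L(B_0)$ and $L(A) = L(U)$, and observing that ``$x$ is a box point of $B$'' agrees with ``$x$ is a box point of $B_0$'' since $B$ and $B_0$ share the same simplex tuple, gives the two bullets.

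The main obstacle is this last piece of bookkeeping: checking that the inequalities defining $\mc R_x$ genuinely land $U$ in $\mc K_{T,x}$ and in its Case~2, and that the two conventions for ``$x$ is a box point of $\,\cdot\,$''---via $S_{[k]}$ for elements of $\mc D$ and via the full simplex tuple for elements of $\mc C$---coincide on all the maximal elements produced here. Once that is settled, the proposition is an immediate consequence of Propositions~\ref{prop:LnuT} and~\ref{prop:kappaTx}.
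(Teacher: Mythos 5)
Your proposal is correct and follows essentially the same route as the paper: the paper's own justification of Proposition~\ref{prop:Lrhox} is precisely to read it off from Proposition~\ref{prop:kappaTx} (with Case~1 covered by the $\gamma_T$/$\nu_T$ lattice-preservation facts), since the first $\mc C$-factors of the maximal elements of $\rho_x(A)$ are exactly the maximal elements of $\kappa_{T,x}(U)$ and $L$, as well as the box-point condition, only depend on that factor's simplex tuple. Your bookkeeping that the $\mc R_x$ conditions place $U$ in $\mc K_{T,x}$ (in its Case~2) is the same verification the paper leaves implicit, using that within $\mc D_x^\bullet$ the support of $c(S,x)$ lies in $[k]$, so the two box-point conventions agree.
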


Finally, we have the following, with an analogous proof to Proposition~\ref{prop:facekappaTx}.

\begin{prop} \label{prop:facerhox}
Let $A \in \mc R_x$ and suppose $\mc B \in \mc D$ such that $B \le_{\mc D} A$. Then one of the following hold:
\begin{itemize}
\item $\rho_x(A|B) = \rho_x(B)$
\item $\rho_x(A|B) = \nu_T(B)$ for some $T$
\item $\rho_x(A|B) = \triv_{\mc D}(B)$.
\end{itemize}
\end{prop}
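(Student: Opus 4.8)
The plan is to follow the proof of Proposition~\ref{prop:facekappaTx} essentially verbatim, after one preliminary reduction. First I would note that $A \in \mc R_x$ forces $A \in \mc N \setminus \mc M$: since $A \in \mc N$ by condition~(b) of the definition of $\mc R_x$, if $A$ also lay in $\mc M$ it would lie in $\mc M \cap \mc N$, hence have $\abs p = 1$ and $a_{1j} = b_{1j}$ for all $j \le k$ by property~\ref{samedifference}; but condition~(e) of $\mc R_x$ produces some $j \le k$ with $a_{1j} > b_{1j} + c(S,x)_j \ge b_{1j}$, a contradiction. Consequently every $\mc D$-face of $A$ is an $\mc N$-face, so $B \in \mc N$ and $B$ has a representative $(p_I, S', a_{I \times \bullet}) \times (q, S', b) \times k$ with nonempty $I \subseteq [\abs p]$ and $S' \le S$.

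Next, writing $(p,S,a) \times (q,S,b) \times k$ for the standard form of $A$, I would let $j$, $T = S_j$, its first vertex $v$, the pivot row $i_0$, and the facet tuple $F$ be as in the construction of $\rho_x(A)$. Exactly as in Proposition~\ref{prop:facegammaT}, the face $\Cay(B)$ is cut by $\rho_x(A)$ precisely when $i_0 \in I$ and $S'_j$ contains $v$ together with a second vertex; otherwise $\rho_x(A|B) = \triv_{\mc D}(B)$. In the cut case, set $T' := S'_j$, which survives in the standard form of $B$ since $a_{i_0 j} = \max_i a_{ij} > b_{1j} + c(S,x)_j$ is positive. I would then establish the dichotomy $\rho_x(A|B) = \rho_x(B)$ when $x$ is a box point of $S'$, and $\rho_x(A|B) = \nu_{T'}(B)$ otherwise --- the $\nu_{T'}(B)$ branch being the analogue of the $\gamma_{S'_j}(B)$ branch of Proposition~\ref{prop:facekappaTx}, since when $x$ ceases to be a box point of $S'$ the construction of $\rho_x$ collapses to the plain corner bite on $\mc N$, and $B \in \mc N_{T'}$ because $a_{i_0 j} > b_{1j}$. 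For the $\rho_x(B)$ branch I would use that box-point-ness is preserved under enlarging a tuple (so each box point has a minimal witnessing sub-tuple) and that $c(S',x) = c(S,x)$ on the common support (Section~\ref{sec:structboxpoints}); these facts let one verify that conditions (a)--(e) of $\mc R_x$ descend to $B$, that $\rho_x(B)$ selects the pivot column $T'$ as well, and that the box-point status of $S'$ and of its facet tuple is consistent with whichever of the two cases defines $\rho_x(A)$ and $\rho_x(B)$.

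I expect the main obstacle to be verifying this last dichotomy when $\rho_x(A)$ is built by Case~2 of its definition --- the refinement, through the focus $x_0$, of the region cut off in the $T$-direction, using the $\mf F$- and $\mf G$-indexed pieces (the analogues of $A^\sharp$, $A^\flat$, $A^G$ from the construction of $\kappa_{T,x}$). One must determine which of these pieces meet $\Cay(B)$ and check that their union over $\Cay(B)$ reassembles exactly into $\rho_x(B)$ (when $x$ is still a box point of $S'$) or into $\nu_{T'}(B)$ (when it is not), rather than into some hybrid; concretely this reduces to showing that $\mf F(S,x)$ and the set $\mf G$ restrict to $\mf F(S',x)$ and to the corresponding set for $B$, and that the focus of $(S,x)$ restricts to the focus of $(S',x)$. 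This is the same bookkeeping already carried out for $\kappa_{T,x}$ in and around Propositions~\ref{prop:kappaTx} and~\ref{prop:facekappaTx}. By contrast Case~1 of $\rho_x(A)$ is immediate: there $\rho_x(A) = \nu_T(A)$, so $\rho_x(A|B) = \nu_T(A|B)$, which lies in $\{\nu_{T'}(B), \triv_{\mc D}(B)\}$ by Proposition~\ref{prop:facenuT}.
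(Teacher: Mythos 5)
Your proposal is correct and follows exactly the route the paper intends: the paper's own ``proof'' of Proposition~\ref{prop:facerhox} is just the remark that the argument is analogous to Proposition~\ref{prop:facekappaTx}, and your write-up carries out precisely that analogy (trivial restriction unless the pivot row lies in $I$ and $S'_j$ retains the first vertex of $T$ plus another vertex, then the dichotomy $\rho_x(B)$ versus $\nu_{T'}(B)$ according to whether $x$ remains a box point of $S'$, with Case~1 handled by Proposition~\ref{prop:facenuT}). The additional bookkeeping you flag (descent of conditions (a)--(e), compatibility of the focus and of $\mf F$, $\mf G$ with passing to faces) is the same implicit verification the paper relies on in Proposition~\ref{prop:facekappaTx}, so nothing essential is missing.
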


%
%
%
%

\subsubsection{The set $\mc D^\bullet_x$}

Let $\mc D^\bullet_x$ be the set of elements of $\mc D$ whose standard form $(p,S,a) \times (q,S,b) \times k$ satisfies the following:
\begin{enumerate}[label=(\alph*)]
\item $x$ is a box point of $S_{[k]}$.
\item $(p,S,a) \times (q,S,b) \times k \in \mc N$.
\item For all $j \le k$, $b_{1j} = 0$ or $d!$.
\item For all $i$, either $a_{ij} = b_{1j}$ for all $j \le k$ or $a_{ij} \ge b_{1j} + c(S_{[k]},x)_j$ for all $j \le k$.
\end{enumerate}

Observe that the sets $\mc R_x$, $\mc S_x$, $\mc T_x$, and $\mc E \cap \mc D^\bullet_x$ are pairwise disjoint and partition $\mc D^\bullet_x$.

\subsection{Proof of Theorem~\ref{thm:main2}}

With our constructions completed, we are now ready to prove Theorem~\ref{thm:main2}. The proof mirrors our proof of the KMW theorem from Section~\ref{sec:KMW}.

Let $L$ be a $d$-dimensional lattice in $\bb R^d$, and fix some nonzero element $x \in \bb Z^d / L$. Define
\[
\mc D_x := \mc D_x^\circ \cup \mc D_x^\bullet, 
\]
where $\mc D_x^\circ$ is the set of all elements of $\mc D$ which do not have $x$ as a box point. We let $\mc D_x$ inherit a poset structure and poset map $\Cay : \mc D_x \to 2\mc P$ from $\mc D$. We can check that $(\mc D_x,\Cay)$ is a perfect $\mc P$-poset.

Let $\mu_T^\circ$ and $\nu_T^\circ$ be the restrictions of $\mu_T$ and $\nu_T$, respectively, to
\begin{align*}
\mc M_T^\circ &:= \mc M_T \cap \mc D_x^\circ \\
\mc N_T^\circ &:= \mc N_T \cap \mc D_x^\circ
\end{align*}
respectively. Let $\epsilon_x$ be the restriction of $\epsilon$ to $\mc E_x := \mc E \cap \mc D_x$.
We can check that for each of the subdivisions $\mu_T^\circ$, $\nu_T^\circ$, $\epsilon_x$, $\rho_x$, $\sigma_x$, and $\tau_x$, the output is always a $\mc D_x$-subdivision. Thus we have well defined subdivisions 
\begin{align*}
\mu_T^\circ &: \mc M_T^\circ \to \Subd(\mc D_x) \\
\nu_T^\circ &: \mc N_T^\circ \to \Subd(\mc D_x) \\
\epsilon_x &: \mc E_x \to \Subd(\mc D_x) \\
\rho_x &: \mc R_x \to \Subd(\mc D_x) \\
\sigma_x &: \mc S_x \to \Subd(\mc D_x) \\
\tau_x &: \mc T_x \to \Subd(\mc D_x)
\end{align*}
over $2\mc P$. We now have the following.

\begin{prop} \label{prop:S}
The family of subdivisions
\[
\ms S_x := \{\mu_T^\circ\}_T \cup \{\nu_T^\circ\}_T \cup \{\epsilon_x\} \cup \{\rho_x\} \cup \{\sigma_x\} \cup \{\tau_x\}
\]
is locally confluent, terminating, and facially compatible (as $\mc D_x$-subdivisions).
\end{prop}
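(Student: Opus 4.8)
The plan is to verify, following the template of Proposition~\ref{prop:confgammaT} and of the proposition preceding Theorem~\ref{thm:Gammax}, that $\ms S_x$ is terminating, facially compatible, and locally confluent, and then to invoke Theorem~\ref{thm:confluence}. Almost all of the local data for the first two properties is already packaged in Propositions~\ref{prop:LmuT}--\ref{prop:facerhox}; the real work is local confluence. \emph{Terminating:} rank a standard form $(p,S,a)\times(q,S,b)\times k$ lexicographically by $\bigl(\ind L(A),\ \sum_{j\le k}\dim S_j,\ \sum_{i,\,j\le k}a_{ij},\ \abs p\bigr)$ and show every non-trivial move strictly lowers this on each maximal element produced. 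By Propositions~\ref{prop:LmuT},~\ref{prop:LnuT},~\ref{prop:Lepsilon},~\ref{prop:Ltaux},~\ref{prop:Lsigmax},~\ref{prop:Lrhox} the first coordinate never increases, and it strictly drops on every maximal element of $\tau_x$, $\sigma_x$ and on the index-lowering maximal elements of $\rho_x$. On the maximal elements where $\ind L$ is unchanged — necessarily among those of $\mu_T^\circ$, $\nu_T^\circ$, $\epsilon_x$, and the $U'$-type element of $\rho_x$ — the ``facet'' branches lower $\sum_{j\le k}\dim S_j$ since $T$ is replaced by one of its facets, while the remaining branches fix it and lower $\sum_{i,\,j\le k}a_{ij}$ by at least one; for $\epsilon_x$ one uses that all rows of $a$ coincide, $\abs p>1$, and standard form forbids an all-zero column, so the entry sum strictly shrinks when $k\ge1$, whereas when $k=0$ the output $A^T$ is already terminal. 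Since the numerical coordinates are nonnegative integers, no infinite sequence of non-trivial moves exists.

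\emph{Facially compatible:} For the first clause, Propositions~\ref{prop:facemuT},~\ref{prop:facenuT},~\ref{prop:faceepsilon},~\ref{prop:facetaux},~\ref{prop:facesigmax},~\ref{prop:facerhox} each identify $\sigma_\alpha(A|B)$, for $B\le_{\mc D}A$ with $A$ in the domain of $\sigma_\alpha$, as either $\triv_{\mc D}(B)$ — whose only maximal element is $\{B\}$, trivially joinable with $\{B\}$ — or as $\sigma_\beta(B)$ for some $\sigma_\beta\in\ms S_x$, in which case a single $\sigma_\beta$-move joins $\Max\sigma_\beta(B)$ with $\{B\}$; one checks only that the named subdivision really lies in the restricted family (e.g.\ whenever a face-identity produces $\nu_T(B)$, $x$ is not a box point of $B$, so $\nu_T(B)=\nu_T^\circ(B)$, using that ``$x$ is not a box point'' descends to faces). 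For the second clause, first characterize the terminal elements: since no $S_j$ of a standard form is a point, $A$ is terminal exactly when it lies in none of $\mc M_T^\circ$, $\mc N_T^\circ$, $\mc E_x$, $\mc R_x$, $\mc S_x$, $\mc T_x$, which unwinds to an explicit condition on $(\abs p, k, a, b)$ and on whether $x$ is a box point of $S$ or of $S_{[k]}$; then check that this condition is inherited by every $B\le_{\mc D}A$, using that passing to a face only decreases $\abs p$ and $k$, restricts $S$ with the $a$-entries left in place, and cannot create the box point $x$.

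\emph{Locally confluent:} this is the crux, and the first step is to pin down which pairs of members of $\ms S_x$ can act at a common $A\in\mc D_x$. From the definitions one checks that $\mc R_x,\mc S_x,\mc T_x,\mc E\cap\mc D_x^\bullet$ are pairwise disjoint and that $\mc E_x$ is disjoint from $\mc R_x\cup\mc S_x\cup\mc T_x$ (using that $\tau_x$ forces $\abs p=1$ while $\epsilon$ forces $\abs p>1$, and that condition (e) of $\mc R_x,\mc S_x$ contradicts the ``all rows of $a$ equal $b_1$'' condition of $\mc E$), so at most one of $\epsilon_x,\tau_x,\sigma_x,\rho_x$ applies at $A$. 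Next, $\mu_T^\circ,\nu_T^\circ$ act only over $\mc D_x^\circ$ whereas $\tau_x,\sigma_x$ require $x$ to be a box point of $S_{[k]}$, so those never collide; $\epsilon_x$ never collides with a $\mu^\circ$ or $\nu^\circ$ (case (i) kills them via $k=0$, case (ii) via $\abs p>1$ together with the entry condition); and $\mu_T^\circ$ cannot coexist with a $\nu_{T'}^\circ$ nor with $\rho_x$, since that forces $A\in\mc M\cap\mc N$, hence $a_{1j}=b_{1j}$ for all $j\le k$, contradicting the strict inequality defining $\mc N_{T'}$ and condition (e) of $\mc R_x$. Hence the only pairs to analyze are $\mu_{T_1}^\circ/\mu_{T_2}^\circ$, $\nu_{T_1}^\circ/\nu_{T_2}^\circ$, and $\nu_T^\circ/\rho_x$.

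The first two reduce to the two-simplex and common-row commuting diagrams of Proposition~\ref{prop:confgammaT} performed on the $U$-component, with the $V$-component moving in step (for $\mu_T$ because of \ref{samedifference}, for $\nu_T$ because $b$ is affected only through the replacement $S\rightsquigarrow F$), together with the already-granted fact that these moves stay inside $\mc D_x$. For $\nu_T^\circ/\rho_x$, observe that $\rho_x$ acts on $S_{j_0}$ with $j_0$ minimal such that $a_{ij_0}>b_{1j_0}+c(S,x)_{j_0}$ for some $i$: if $\nu_T^\circ$ acts on some $S_j$ with $j\ne j_0$ the two moves commute coordinatewise (after, where necessary, the diamond of Proposition~\ref{prop:confgammaT} when they share the pivot row), while if $T=S_{j_0}$ then, since $x$ is not a box point of $S_{[k]}$, deleting the first vertex of $T$ cannot destroy the box point of $F$, so $\rho_x$ is in Case~1 and $\rho_x(A)=\nu_T(A)$, whence the two moves are identical. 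Theorem~\ref{thm:confluence} then yields Proposition~\ref{prop:S}. I expect the main obstacle to be the bookkeeping that every intermediate set arising along these joinings is genuinely an $\mc A$-subdivision lying in $\mc D_x$ — in particular never produces an independent pair $(P,Q)$ with $\ind(P+Q)>\ind(P)\ind(Q)$ — and the verification of the box-point collapse $\rho_x(A)=\nu_T(A)$ in the last case.
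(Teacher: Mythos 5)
Most of your argument runs parallel to the paper's: termination via a lexicographic measure on the standard form (your extra leading coordinate $\ind L(A)$ is harmless but unnecessary), facial compatibility from Propositions~\ref{prop:facemuT}--\ref{prop:facerhox} plus the characterization of terminal elements (which you should actually pin down: within $\mc D_x$ an element is terminal iff $\abs{p}=1$ and $k=0$, a condition that manifestly passes to faces and involves no box-point clause at all), and local confluence by showing the domains are pairwise disjoint except for $\mu_{T_1}^\circ/\mu_{T_2}^\circ$ and $\nu_{T_1}^\circ/\nu_{T_2}^\circ$, which are joined exactly as in Proposition~\ref{prop:confgammaT}.

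The genuine gap is your treatment of the alleged $\nu_T^\circ/\rho_x$ collision. In the critical sub-case $T=S_{j_0}$ you claim that, because $x$ is not a box point of $S_{[k]}$, passing from $S$ to the facet tuple $F$ ``cannot destroy the box point,'' so $\rho_x$ falls into Case~1 and $\rho_x(A)=\nu_T(A)$. This inference is false: the relevant situation is precisely one where $\supp c(S,x)$ meets both $[k]$ (at $j_0$) and its complement, and then replacing $S_{j_0}$ by the facet opposite its first vertex can very well kill the box point (e.g.\ two lattice-length-$2$ segments $S_1,S_2$ in orthogonal directions, $L=L(S_1)+L(S_2)$, $x$ the ``diagonal'' class with $c=(1,1)$, $k=1$: $x$ is a box point of $S$ but neither of $S_{[1]}$ nor of $(f,S_2)$). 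In that event $\rho_x$ is in Case~2, its output strictly refines the $U''$-piece of $\nu_T(A)$ by the cut through $x_0$, and the two one-step results are not joinable by the family (on the $\nu$-side that piece becomes terminal, after at most an $\epsilon$-re-registration, without ever acquiring that cut). So your diamond for this pair does not close. The resolution the paper uses is that this pair simply never occurs: $\mc R_x$ is contained in $\mc D_x^\bullet$ (this is the content of the partition observation at the end of Section~5.3, i.e.\ condition (a) of $\mc R_x$ is to be read as ``$x$ is a box point of $A$'', that is of $S_{[k]}$), whence $\mc R_x$ is disjoint from $\mc N_T^\circ\subset\mc D_x^\circ$ and only the $\mu/\mu$ and $\nu/\nu$ collisions remain. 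Your reading of (a) as ``box point of $S$'' is what created the spurious case, and your argument for disposing of it does not work; your ``commute coordinatewise'' claim for $j\ne j_0$ would likewise need care (the facet branch of a $\nu$-move can leave $\mc R_x$), but both issues evaporate once the correct disjointness $\mc R_x\cap\mc D_x^\circ=\emptyset$ is invoked.
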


\begin{proof}
We first check local confluence. Note that for any simplices $T_1$, $T_2$, the sets $\mc M_{T_1}^\circ$, $\mc N_{T_2}^\circ$, $\mc E_x$, $\mc R_x$, $\mc S_x$, and $\mc T_x$ are pairwise disjoint. Thus, if $A \in \mc D_x$ and there are two distinct moves from $\{A\}$, then these moves must be either $\mu_{T_1}$, $\mu_{T_2}$ for some $T_1$, $T_2$ or $\nu_{T_1}$, $\nu_{T_2}$ for some $T_1$, $T_2$. The same argument from Proposition~\ref{prop:confgammaT} shows that the results of these moves are joinable.

We next show the terminating property. Let $A \in \mc D_x$ with standard form $(p,S,a) \times (q,S,b) \times k$, and suppose we have a subdivision $f(A)$ where $f \in \ms S_x$. Let $B \in \Max f(A)$ with standard form $(p',S',a') \times (q',S',b') k'$. Then one of the following holds:
\begin{enumerate}
\item $\sum_{j \le k'} \dim S'_j < \sum_{j \le k} \dim S_j$.
\item The above inequality is equality, and
\[
\sum_{\substack{i \\ j \le k'}} a'_{ij} < \sum_{\substack{i \\ j \le k}} a_{ij}.
\]
\item The above two inequalities are equality, and $\abs{p'} < \abs{p}$. (This can only possibly occur if $f = \epsilon$.) 
\end{enumerate}
It follows that $\ms S_x$ is terminating.

Finally, we note that Propositions~\ref{prop:facemuT}, \ref{prop:facenuT}, \ref{prop:faceepsilon}, \ref{prop:facetaux}, \ref{prop:facesigmax}, and \ref{prop:facerhox} imply the first criterion of facial compatibility. For the second criterion, let $A \in \mc D_x$ with standard form $(p,S,a) \times (q,S,b) \times k$. We have that $A$ is terminal if and only if $A$ is not in any of the sets $\mc M^\circ$, $\mc N^\circ$, $\mc E_x$, $\mc R_x$, $\mc S_x$, $\mc T_x$. This occurs if and only if $\abs{p} = 1$ and $k = 0$. Clearly if this property holds for $A$, then it holds for any $B \le_{\mc D} A$. Hence $\ms S_x$ is facially compatible.
\end{proof}

From this we can conclude the following.

\begin{thm} \label{thm:Deltax}
There is a canonical subdivision $\Delta_x : \mc D_x \to \Subd(\mc D_x)$ over $2\mc P$ such that for all $A \in \mc D_x$ and $B \in \Max(\Delta_x(A))$, where $(p,S,a) \times (q,S,b) \times k$ is the standard form of $B$, we have the following:
\begin{enumerate}[label=(\alph*)]
\item $\abs{p} = 1$ and $k = 0$.
\item If $A \in \mc D_x^\circ$, then $L(B) = L(A)$.
\item If $A \in \mc D_x^\bullet$, then $L(B) < L(A)$.
\end{enumerate}
\end{thm}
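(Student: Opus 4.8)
The plan is to read $\Delta_x$ off Theorem~\ref{thm:confluence}. By Proposition~\ref{prop:S} the family $\ms S_x$ is locally confluent, facially compatible, and terminating, and $(\mc D_x,\Cay)$ is a perfect $2\mc P$-poset, so Theorem~\ref{thm:confluence} produces a canonical subdivision $\Delta_x:\mc D_x\to\Subd(\mc D_x)$ over $2\mc P$ with $\Delta_x(A)=\langle S(A)\rangle_{\mc D_x}$, where $S(A)$ is the unique terminal set with $\{A\}\xrightarrow{\ast}S(A)$. Since $\Max\Delta_x(A)\subseteq S(A)$ and every member of a terminal set is itself terminal, everything reduces to understanding terminal elements. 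As recorded in the proof of Proposition~\ref{prop:S}, an element of $\mc D_x$ is terminal exactly when its standard form $(p,S,a)\times(q,S,b)\times k$ has $\abs p=1$ and $k=0$; this is part~(a). Moreover $k=0$ forces $S_{[k]}$ to be empty, so a terminal element never has $x$ as a box point, i.e. $S(A)\subseteq\mc D_x^\circ$ for every $A\in\mc D_x$.

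For part~(b), I would first observe that $\mc D_x^\circ$ is closed under moves: the only members of $\ms S_x$ applicable to an element of $\mc D_x^\circ$ are $\mu_T^\circ$, $\nu_T^\circ$, and $\epsilon_x$ (the remaining ones require $x$ to be a box point), and each of the constructions $\mu_T$, $\nu_T$, $\epsilon$ modifies the simplex tuple $S$ only by passing to a face of it or by appending simplices in positions beyond $k$, hence replaces $S_{[k]}$ by a face of itself; since box points of a face of a tuple inject into the box points of the tuple, ``$x$ is not a box point of $S_{[k]}$'' is preserved. These three constructions also preserve $L$, by Propositions~\ref{prop:LmuT}, \ref{prop:LnuT}, and~\ref{prop:Lepsilon}. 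Consequently, in any reduction $\{A\}\to X_1\to\dots\to S(A)$ with $A\in\mc D_x^\circ$, every element that ever appears lies in $\mc D_x^\circ$ and has lattice $L(A)$; in particular $L(B)=L(A)$ for all $B\in\Max\Delta_x(A)$, which is part~(b).

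For part~(c), fix $A\in\mc D_x^\bullet$ and $B\in\Max\Delta_x(A)$, and trace $B$ back through the reduction to get an ancestral chain $A=z_0,z_1,\dots,z_m=B$, where at each move $z_{i+1}$ is either $z_i$ (if the move was performed elsewhere) or the element of $\Max\sigma(z_i)$ from which $z_{i+1}$ descends. By the partition $\mc D_x^\bullet=\mc R_x\sqcup\mc S_x\sqcup\mc T_x\sqcup(\mc E\cap\mc D_x^\bullet)$, the only moves applicable to a $\mc D_x^\bullet$-element are $\rho_x$, $\sigma_x$, $\tau_x$, and $\epsilon$; Propositions~\ref{prop:Lepsilon}, \ref{prop:Ltaux}, \ref{prop:Lsigmax}, and~\ref{prop:Lrhox} then show that if $z_i\in\mc D_x^\bullet$ one has $\ind L(z_{i+1})\le\ind L(z_i)$, with strict inequality whenever $z_{i+1}\in\mc D_x^\circ$ (indeed $\tau_x$ and $\sigma_x$ strictly lower $L$ on all maximal outputs, $\rho_x$ strictly lowers the index on any output for which $x$ is no longer a box point — the ones built from the facet tuples in $\mf F$ and $\mf G$ — and $\epsilon$ cannot produce a $\mc D_x^\circ$-element since it leaves $S_{[k]}$ unchanged). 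Combined with part~(b), once the chain enters $\mc D_x^\circ$ it stays there with $\ind L$ constant. Since $z_0\in\mc D_x^\bullet$ while $z_m=B\in\mc D_x^\circ$, there is a first index $j$ with $z_j\in\mc D_x^\bullet$ and $z_{j+1}\in\mc D_x^\circ$, and then
\[
\ind L(B)=\ind L(z_{j+1})<\ind L(z_j)\le\ind L(z_0)=\ind L(A),
\]
so $L(B)<L(A)$, which is part~(c).

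The hard part will be the bookkeeping underlying parts~(b) and~(c): for each of the six constructions $\mu_T$, $\nu_T$, $\epsilon$, $\rho_x$, $\sigma_x$, $\tau_x$ one must check both how it transforms the truncated tuple $S_{[k]}$ (and hence whether $x$ survives as a box point) and how it affects $L$ and $\ind L$. The constructions $\rho_x$, $\sigma_x$, $\tau_x$ introduce the facet tuples of $\mf F$ and $\mf G$ precisely so that $x$ fails to be a box point of them, and this is the mechanism forcing the strict index drop at the step where a reduction leaves $\mc D_x^\bullet$; verifying that this interacts correctly with truncation to the first $k$ columns and with the box-point calculus of Section~\ref{sec:structboxpoints} together with Propositions~\ref{prop:kappaTx} and~\ref{prop:facekappaTx} is where the real work lies.
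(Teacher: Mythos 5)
Your proposal is correct and follows the same route as the paper: construct $\Delta_x$ by feeding Proposition~\ref{prop:S} into Theorem~\ref{thm:confluence}, read off (a) from the characterization of terminal elements, and deduce (b) and (c) from Propositions~\ref{prop:LmuT}, \ref{prop:LnuT}, \ref{prop:Lepsilon}, \ref{prop:Ltaux}, \ref{prop:Lsigmax}, and \ref{prop:Lrhox}, using that a terminal element cannot have $x$ as a box point. Your explicit bookkeeping (closure of $\mc D_x^\circ$ under moves and the index-drop at the first transition out of $\mc D_x^\bullet$) is exactly the argument the paper leaves implicit in its short proof.
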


\begin{proof}
Construct $\Delta_x$ from $\ms S_x$ and Theorem~\ref{thm:confluence}. Then, (a) follows from the proof of Proposition~\ref{prop:S}. (b)  follows from Propositions~\ref{prop:LmuT}, \ref{prop:LnuT}, and \ref{prop:Lepsilon}. For (c), note that if $B$ satsifies (a), then $x$ is not a box point of $B$. Thus, (c) follows from Propositions~\ref{prop:LmuT}, \ref{prop:LnuT}, \ref{prop:Lepsilon}, \ref{prop:Ltaux}, \ref{prop:Lsigmax}, and \ref{prop:Lrhox}.
\end{proof}

We are now ready for the final proof. As in Theorem~\ref{thm:main2}, let $d$ be the dimension of the space we are working in and let $c$ be an integer $ \ge d!+d$.

\begin{proof}[Proof of Theorem~\ref{thm:main2}]
For any $\mc D$-complex $X$ with $\dim \abs{X} = d$, we consider the collection of lattices
\[
\ms L(X) := \{ L(A) : A \in \Max X \}.
\]
Start with a $\mc D$-subdivision $X$ with $\dim \abs{X} = d$ and 2-support $(P,Q)$. Assume that every element of $X$ is terminal with respect to $\ms S_x$. Consider a map $\theta : X \to \mc D$ defined as follows: If $A \in X$ has standard form $(p,S,a) \times (q,S,b) \times 0$, then
\[
\theta(A) := (cp, S, ca) \times (d!q, S, d!b) \times \abs{S}.
\]
Then $\theta(X)$ is a $\mc D$-subdivision with 2-support $(cP, d!Q)$.

Choose some lattice $L \in \ms L(X)$ and some nonzero $x \in \bb R^d / L$. Note that $\theta(X)$ is a $\mc D_x$-complex; indeed, for all $j \le \abs{S}$, we have $b_{1j} = 0$ or $d!$ and $a_{1j} = c \ge d! + d \ge b_{1j} + d$. Let $X' := \Delta_x^\ast(\theta(X))$, where $\Delta_x$ is from Theorem~\ref{thm:Deltax} and the $\ast$ construction is from Proposition~\ref{prop:canonicalrefinement}. Then $X'$ is a $\mc D$-subdivision with 2-support $(cP, d!Q)$ where every element is terminal with respect to $\ms S_x$. Comparing $\ms L(X)$ and $\ms L(X')$, by Theorem~\ref{thm:Deltax}, we have that $\ms L(X')$ is obtained by replacing at least one lattice of $\ms L(X)$ with lattices of lower index, while keeping the other lattices the same.
Thus, if we repeat the above process on $X'$ instead of $X$, and so on, we will eventually obtain a $\mc D$-subdivision $Y$ with 2-support $(c^N P, (d!)^N Q)$ for some $N$, such that every element of $Y$ is terminal and $\ms L(Y) = \{ \bb Z^d \}$.

Let $r$, $s$ be nonnegative integers. Consider the map $\omega_{r,s} : Y \to \mc C$ given by
\[
\omega((p,S,a) \times (q,S,b) \times 0) = (rp+sq,S,ra+sb).
\]
Then $\omega_{r,s}(Y)$ is a $\mc C$-subdivision with support $rc^N P + s(d!)^N Q$. Let
\[
Z := \Gamma^\ast(\omega_{r,s}(Y)),
\]
where $\Gamma$ is from Theorem~\ref{thm:Gamma}. Then $Z$ is a $\mc C$-subdivision with support $rc^N P + s(d!)^N Q$ such that for all $(p,S,a) \in Z$ in standard form, we have $\abs{S} = 0$ and $L(p,S,a) = \bb Z^d$. Thus $\Cay(Z)$ is a unimodular triangulation of $rc^N P + s(d!)^N Q$.

Now, let $P$ be a $d$-dimensional integral polytope in $\bb R^d$, and let $X_0$ be any triangulation of $P$ into integral simplices. Let
\[
X := \{ ((0),(T),(1)) \times ((0),(T),(1)) \times 0 \in \mc D : T \in X_0 \}
\]
Then $X$ is $\mc D$-subdivision with 2-support $(P,P)$, all of whose elements are terminal. Hence, applying the above argument to $X$ gives the result.
\end{proof}

\end{document}